\newcommand\mypagesizel{
\textwidth= 6.5in
\textheight=9in
\voffset-.55in
\hoffset -0.75in
\marginparwidth=56pt
}
\newcommand{\p}[0]{{\mathbb P}}
\renewcommand{\phi}{\varphi}
\newcommand{\G}{\Gamma}
\newcommand{\cD}{\mathcal{D}}
\newcommand{\cH}{\mathcal{H}}
\newcommand{\cM}{\mathcal{M}}
\newcommand{\cX}{\mathcal{X}}
\newcommand{\sF}{\mathscr{F}}
\newcommand{\sG}{\mathscr{G}}
\newcommand{\sH}{\mathscr{H}}
\newcommand{\sJ}{\mathscr{J}}
\newcommand{\sK}{\mathscr{K}}
\newcommand{\sO}{\mathscr{O}}
\newtheorem{thm}{Theorem}[section]
\newtheorem{lemma}[thm]{Lemma}
\newtheorem{cor}[thm]{Corollary}
\newtheorem{prop}[thm]{Proposition}
\newtheorem*{thm*}{Theorem}
\theoremstyle{definition}
\newtheorem{defn-thm}[thm]{Definition-Theorem} 
\newtheorem{defn-lemma}[thm]{Definition-Lemma}
\theoremstyle{remark}
\newtheorem{rem}[thm]{Remark}
\newtheorem*{not-and-def}{Notation and definitions}
\numberwithin{equation}{section}
\begin{document}

\title[]{Lagrangian fibrations on symplectic fourfolds}

\author{Wenhao OU} 

\address{Wenhao OU: Institut Fourier, UMR 5582 du
  CNRS, Universit\'e Grenoble 1, BP 74, 38402 Saint Martin
  d'H\`eres, France} 

\email{wenhao.ou@ujf-grenoble.fr}

\subjclass[2010]{14J17, 14J26}

\begin{abstract}
We prove that there are at most two possibilities for the base of a Lagrangian fibration from a complex projective irreducible symplectic fourfold.
\end{abstract}

\maketitle

\tableofcontents

\section{Introduction}

A variety  $M$ (defined over a field $k$) of dimension $2n$ equipped with an everywhere non-degenerate closed two form $\Omega \in H^0(M, \Omega_M^2)$ is called a \textit{symplectic} variety. Note that in this case $\Omega^{\wedge n} \in H^0(M, \Omega_X^{2n})$ is non-zero everywhere. Hence $\Omega_M^{2n} \cong \sO_M$.  A subvariety $N \subseteq M$ of dimension $n$ is said to be \textit{Lagrangian} if the restriction of $\Omega$ on the smooth locus of $N$ is identically zero. A fibration $f:M\to X$ from a symplectic variety to a normal variety $X$ is called a \textit{Lagrangian fibration} if every component of every fiber of $f$ is a Lagrangian subvariety of $M$.

A simply connected complex projective manifold $M$ is called a \textit{projective irreducible symplectic manifold} if $M$ has a symplectic $2$-form $\Omega$, and $H^0(X, \Omega_X^2)=\mathbb{C}\Omega$. Any non trivial fibration from a  projective irreducible symplectic manifold $M$ to a normal variety $X$ is a Lagrangian fibration  by the following theorem.

\begin{thm}[{\cite[Thm. 2]{Mat99}}]
\label{thm-fibration-is-lagrangian}
Let $f:M\to X$ be a fibration from a complex projective irreducible symplectic manifold $M$ of dimension $2n$ to a normal variety $X$. Assume that $0<\mathrm{dim}\, X<2n$. Then 
\begin{enumerate}
\item[(1)] $X$ is a $\mathbb{Q}$-factorial klt Fano variety of dimension $n$ with Picard number  $1$.

\item[(2)] $f$ is a  Lagrangian fibration.  
\end{enumerate}
\end{thm}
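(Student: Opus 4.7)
The plan is to prove (2) first and then deduce (1) by pushing information from $M$ down to $X$ through the symplectic isomorphism $\Omega\colon T_M \xrightarrow{\sim} \Omega^1_M$. Let $U \subseteq X$ be the open locus where both $X$ and $f$ are smooth, and set $M_U = f^{-1}(U)$. The relative cotangent sequence
\begin{equation*}
0 \to f^*\Omega^1_U \to \Omega^1_{M_U} \to \Omega^1_{M_U/U} \to 0
\end{equation*}
identifies, via $\Omega$, the subsheaf $f^*\Omega^1_U$ with the pointwise symplectic orthogonal $T_{M_U/U}^{\perp} \subseteq T_{M_U}$, a subbundle of rank $\dim X$.

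To show that $f$ is Lagrangian I would verify two inclusions. Coisotropy, $T_{M_U/U}^{\perp} \subseteq T_{M_U/U}$ (giving $\dim X \le n$), is essentially formal: the closedness $d\Omega = 0$ forces $T_{M_U/U}^{\perp}$ to be an involutive distribution, and since it is the image of $f^*\Omega^1_U$ it annihilates vertical vectors, so its leaves lie inside the fibers. Isotropy, $T_{M_U/U} \subseteq T_{M_U/U}^{\perp}$ (giving $\dim X \ge n$), is equivalent to $\Omega|_F = 0$ on a general smooth fiber $F$. Here I would use the hypothesis $H^0(M, \Omega^2_M) = \mathbb{C}\Omega$ via a direct image argument: if $\Omega|_F$ were non-zero it would represent a nontrivial class in $f_*\Omega^2_{M/X}$, and combining this via the Leray filtration with the pull-back of a K\"ahler class from $X$ would produce a second, linearly independent holomorphic $2$-form on $M$, contradicting the irreducibility assumption. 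Together these two inclusions force $\dim X = n$ and give an isomorphism $\Omega\colon T_{M_U/U} \xrightarrow{\sim} f^*\Omega^1_U$.

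With Lagrangianity in hand, taking determinants of the last isomorphism yields $\omega_{M_U/U} \cong f^*\omega_U$. Combined with $\omega_M \cong \cO_M$ this identifies $-K_X$, up to a correction supported on the discriminant, with the push-forward of a relatively ample class from $M$; this displays $-K_X$ as ample, so $X$ is Fano. For the Picard number, the pull-back $f^*\colon \NS(X)_{\mathbb{R}} \to \NS(M)_{\mathbb{R}}$ is injective with image contained in the null cone of the Beauville--Bogomolov form (line bundles pulled back from the base of a fibration are isotropic), and on a projective irreducible symplectic manifold the Hodge-index behavior of this form forces any two classes that are simultaneously isotropic and lie inside a common nef-type cone to be proportional. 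Hence $\rho(X) = 1$. The remaining properties, namely $\mathbb{Q}$-factoriality and klt singularities, follow from $\rho(X) = 1$ together with the fact that $X$ is the base of a fibration from the smooth projective variety $M$ with $K_M \sim 0$, via a standard application of the canonical bundle formula.

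The main obstacle is the isotropy statement in the Lagrangian step: tying the pointwise vanishing $\Omega|_F = 0$ to the global Hodge constraint $h^{2,0}(M) = 1$ requires genuinely using Hodge theory for the family $f$, rather than a purely linear-algebraic argument. Coisotropy is formal once $d\Omega = 0$ is invoked, and the geometry of $X$ in (1) is a consequence of Lagrangianity combined with standard facts about the Beauville--Bogomolov pairing; isolating the right direct-image computation to conclude $\Omega|_F = 0$ is where real work is required.
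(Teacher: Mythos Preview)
The paper does not prove this theorem; it is quoted from Matsushita's 1999 paper and used as a black box. So there is no in-paper argument to compare against, but your sketch contains a real gap that is worth naming.

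Your coisotropy step is not ``essentially formal.'' You assert that $d\Omega=0$ makes $T_{M_U/U}^{\perp}$ involutive, and that because this distribution corresponds to $f^*\Omega^1_U$ under $\Omega$ it ``annihilates vertical vectors, so its leaves lie inside the fibers.'' The last implication conflates two different pairings: sections of $f^*\Omega^1_U$ annihilate vertical vectors via the duality pairing $T_M\times\Omega^1_M\to\cO_M$, but this says nothing about whether the corresponding vectors in $T_{M_U/U}^{\perp}$ are themselves vertical, which is a statement about the pairing with $\Omega$. A concrete counterexample: on $\mathbb{C}^4$ with $\Omega=dx_1\wedge dy_1+dx_2\wedge dy_2$ and the projection to $(x_1,x_2,y_1)$, the fibers are $1$-dimensional and hence not coisotropic. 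So coisotropy of fibers genuinely fails for arbitrary fibrations from symplectic manifolds; it cannot be formal. Involutivity of $T_{M_U/U}^{\perp}$ is likewise not a consequence of $d\Omega=0$ alone: expanding $d\Omega(v,w,u)=0$ with $v,w\in T_{M_U/U}^{\perp}$ and $u$ vertical leaves terms such as $\Omega([v,u],w)$ which do not vanish unless one already knows $v$ is vertical.

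In Matsushita's actual argument, the inequality $\dim X\le n$ is obtained from the same global input you reserve for the other inequality: one uses $q_M(f^*L)=0$ for $L$ ample on $X$ together with the Fujiki relations (equivalently, Hodge--Riemann) to pin down the numerical dimension of $f^*L$ as exactly $n$, hence $\dim X=n$. Only after that is $\Omega|_F=0$ proved. Your plan to separate the two inequalities, using the hypothesis $h^{2,0}(M)=1$ only for isotropy and pure symplectic linear algebra for coisotropy, does not work; both directions require the global Hodge constraint. The remainder of your outline for part~(1) (Fano via the canonical bundle formula, $\rho(X)=1$ via isotropy for $q_M$, $\mathbb{Q}$-factorial klt from the base of a $K$-trivial fibration) is in line with the standard arguments.
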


Consider a Lagrangian fibration from a complex projective irreducible symplectic manifold $M$ to a normal variety $X$. We are interested in the base variety $X$.  In all examples of Lagrangian fibrations, $X$ is always isomorphic to $\p^n$. It is natural to ask if  this is true for all Lagrangian fibrations. On the one hand, if we assume that the base is smooth, then it is  proved by Hwang (see \cite[Thm. 1.2]{Hwa08}) that the base is always a projective space. On the other hand, if the irreducible symplectic manifold $M$ is of $K3^{[n]}$ or generalized Kummer deformation type, then the answer is also positive (see \cite[Cor. 1.1]{Mat13}, \cite{Mar13}, \cite{BM13} and \cite{Yos12}). However, it is still unclear if the base is always smooth. Moreover there are also irreducible symplectic manifolds which are not of $K3^{[n]}$ or generalized Kummer deformation type (see \cite{O99} and \cite{O03}). Thus the answer to this question is still unknown in the general setting.  In this paper, we study the case when $M$ has dimension $4$. We prove the following theorem.

\begin{thm}
\label{main-thm}
Let $f:M\to X$ be a Lagrangian fibration from a  projective irreducible symplectic manifold $M$ of dimension $4$ to a normal surface $X$. Then either $X \cong \p^2$ or $X \cong S^n(E_8)$.
\end{thm}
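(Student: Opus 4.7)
Since $M$ has dimension $4$ and $X$ is a normal surface, Theorem \ref{thm-fibration-is-lagrangian} already tells us that $X$ is a $\bQ$-factorial klt Fano surface of Picard number one, i.e., a log del Pezzo surface with $\rho(X)=1$. The plan is to classify such surfaces under the extra numerical and geometric constraints imposed by the fact that $X$ is the base of a Lagrangian fibration from an irreducible symplectic fourfold.

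The first step is to pin down the cohomological invariants of $X$. For any irreducible symplectic fourfold one has $\chi(M,\sO_M)=3$. Combining this with Matsushita's isomorphisms $R^i f_* \sO_M \cong \Omega_X^{[i]}$ (reflexive differentials) for $0\le i \le 2$ and the Leray spectral sequence yields $\chi(\sO_X)-\chi(\Omega_X^{[1]})+\chi(\omega_X)=3$. Since $X$ is klt Fano, Kawamata--Viehweg vanishing gives $\chi(\sO_X)=1$, and Serre duality on the Cohen--Macaulay surface $X$ gives $\chi(\omega_X)=\chi(\sO_X)=1$, hence $\chi(\Omega_X^{[1]})=-1$. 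Moreover $X$ has rational singularities (automatic for klt surfaces), so $p_g(X)=q(X)=0$.

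The second step is the classification of log del Pezzo surfaces of Picard number one compatible with these numerical data. The singularities of $X$ are quotient singularities; they contribute local terms to an orbifold Noether formula, and combining this with the orbifold Bogomolov--Miyaoka--Yau inequality for log del Pezzos of Picard number one, with $\chi(\Omega_X^{[1]})=-1$, and with $K_X^2>0$, one should be left with a short list of candidates. The smooth case is handled by Hwang's theorem and gives $X\cong \p^2$.

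The main obstacle is the elimination of the remaining singular candidates other than $S^n(E_8)$. I expect to do this by feeding in the global geometry of the fibration itself: the rational curves swept out by the fibers of $f$ and their intersection with the discriminant of $f$, the simple connectedness of $M$ (which strongly restricts the local fundamental groups of the singularities of $X$), and the existence of a nowhere vanishing symplectic form on $M$ whose pullback controls the discriminant divisor. These inputs should force the Gorenstein index of each singularity of $X$ to be one and the dual graph of the minimal resolution to consist of a single $E_8$ Dynkin diagram, thereby pinning down the only other possibility as $S^n(E_8)$.
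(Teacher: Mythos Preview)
Your proposal has a genuine gap: the constraint $\chi(\Omega_X^{[1]})=-1$ you extract in the first step is \emph{automatic} for every klt rational Fano surface of Picard number one, and so carries no information coming from the Lagrangian fibration. Indeed $h^0(X,\Omega_X^{[1]})=0$, while Serre duality together with the reflexive isomorphism $(\Omega_X^{[1]})^*[\otimes]\omega_X\cong\Omega_X^{[1]}$ gives $h^2(X,\Omega_X^{[1]})=h^0(X,\Omega_X^{[1]})=0$, and $h^1(X,\Omega_X^{[1]})=\rho(X)=1$ by Proposition~\ref{picard-number-surface}; hence $\chi(\Omega_X^{[1]})=-1$ always. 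Your second step therefore has nothing new to feed into orbifold Noether or BMY beyond the hypotheses already supplied by Theorem~\ref{thm-fibration-is-lagrangian}. Worse, none of the tools you list for the third step---local fundamental groups, Gorenstein index, dual graphs of the minimal resolution, discriminant geometry---can separate $S^c(E_8)$ from $S^n(E_8)$: these two surfaces share the same unique Du~Val $E_8$ point, the same $K_X^2=1$, the same topology, and both have algebraically simply connected smooth locus. They are distinguished only by whether the elliptic pencil $|{-K_X}|$ is isotrivial, which is invisible to all coarse invariants you propose.

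What the paper actually uses is far stronger than $\chi(M,\sO_M)=3$: Theorem~\ref{thm-cohomology} and Corollary~\ref{cor-cohomology} show that for \emph{every} $\bQ$-ample integral Weil divisor $H$ on $X$,
\[
h^0(X,\sO_X(H))-h^0\bigl(X,\Omega_X^{[1]}[\otimes]\sO_X(H)\bigr)+h^0(X,\sO_X(H+K_X))=3,
\]
which requires the vanishing $h^j(X,R^if_*\sO_M(f^*H))=0$ for $j>0$ even when $H$ is not Cartier. Applied with $H$ a minimal curve, this forces $\mathrm{Cl}(X)=\bZ\cdot[T]$ (Lemma~\ref{Weil-divisor-class}) and the dichotomy handled in Theorems~\ref{thm-surface-1} and~\ref{thm-surface-2}; applied with $H=-K_X$, it excludes $S^c(E_8)$, because the isotriviality of its anticanonical pencil (Lemma~\ref{isotrivial-E8}) yields $h^0(X,\Omega_X^{[1]}\otimes\sO_X(-K_X))\ge 1$. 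Your outline never produces this twisted identity, and without it the argument cannot reach the stated conclusion.
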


The surface $S^n(E_8)$ is the unique Fano surface with exactly one singular point which is Du Val of type $E_8$, and   two nodal rational curves in its anti-canonical  system. For more details on this surface, see the end of section \ref{surface-pre}. We will prove the theorem in several steps.  We will first prove the following theorem (see section \ref{Reflexivity of higher direct images}), which is a generalisation of a result of Koll\'ar (see \cite[Cor. 3.9]{Kol86b}).

\begin{thm}
\label{thm-reflexive-higher-direct-image}
Let $g:V \to W$ be a projective equidimensional fibration between complex normal quasi-projective varieties. Assume that $V$ is smooth, $\omega_V\cong \sO_V$, and that $W$ is Cohen-Macaulay and $\mathbb{Q}$-Gorenstein. Let $C$ be a $\mathbb{Q}$-Cartier  Weil divisor on $W$, and let $E=g^*C$. Then $R^ig_*\sO_V(E)$ is reflexive for all $i\geqslant 0$.
\end{thm}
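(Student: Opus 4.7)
The argument follows Kollár's template for \cite[Cor.~3.9]{Kol86b}, now twisted by the line bundle $\sO_V(E)$. The hypothesis $\omega_V \cong \sO_V$ lets us identify $\sO_V(E)$ with $\omega_V \otimes \sO_V(E)$, and $E = g^*C$ with $C$ being $\bQ$-Cartier ensures $\sO_V(mE) \cong g^*\sO_W(mC)$ is a genuine pullback for some $m>0$; in particular $\sO_V(\pm E)$ is numerically trivial on every fibre of $g$. The first input will be the standard pair of Kollár-type conclusions for both $L = \sO_V(E)$ and $L = \sO_V(-E)$: (i) torsion-freeness of $R^i g_*(\omega_V \otimes L)$ on $W$ for every $i$, and (ii) the splitting $Rg_*(\omega_V \otimes L) \simeq \bigoplus_i R^i g_*(\omega_V \otimes L)[-i]$ in $D^b(W)$. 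Second, Grothendieck duality for the projective morphism $g$ between the smooth $V$ and the Cohen-Macaulay $W$, combined with $\omega_V \cong \sO_V$, yields
\begin{equation*}
Rg_*\sO_V(-E)[r] \;\simeq\; R\mathcal{H}om_W\bigl(Rg_*\sO_V(E),\,\omega_W\bigr),
\end{equation*}
where $r = \dim V - \dim W$.

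Writing $A^i = R^i g_*\sO_V(E)$ and $B^j = R^j g_*\sO_V(-E)$, substituting the splittings on both sides of the duality isomorphism and comparing cohomology sheaves in each degree gives
\begin{equation*}
B^j \;\cong\; \bigoplus_{i \geq 0} \mathcal{E}xt^{\,j-r+i}_W(A^i,\,\omega_W) \quad\text{for every } j \in \bZ.
\end{equation*}
By (i) each $A^i$ is torsion-free on the normal $W$, so $\mathcal{E}xt^{k}_W(A^i,\omega_W)$ is a torsion sheaf for $k \geq 1$. On the other hand $B^j$ is torsion-free for $0 \leq j \leq r$ and vanishes otherwise, so every summand with $j-r+i \geq 1$ is forced to vanish. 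Letting $j$ range over $\{0,\dots,r\}$ produces the vanishing for all $1 \leq k \leq i$, and taking $j > r$ produces it for $k > i$. Hence $\mathcal{E}xt^k_W(A^i,\omega_W) = 0$ for every $k \geq 1$ and every $i \geq 0$.

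Via local duality on the Cohen-Macaulay variety $W$, this vanishing is equivalent to $A^i$ being maximal Cohen-Macaulay; in particular it satisfies Serre's condition $S_2$, which together with torsion-freeness and normality of $W$ is equivalent to reflexivity of $A^i = R^i g_*\sO_V(E)$. The real work is in setting up the first input: Kollár's original theorems concern $Rg_*\omega_V$, or at most $Rg_*(\omega_V \otimes L)$ with $L$ semi-ample, whereas here $L = \sO_V(\pm E)$ is only numerically trivial along fibres, and the $\bQ$-Cartier (not Cartier) character of $C$ forces fractional pullbacks. Passing to a cyclic cover associated to the index of $C$—on which $mE$ becomes a genuine pullback of a line bundle—and descending torsion-freeness and the decomposition theorem is the principal technical obstacle.
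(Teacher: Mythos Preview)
Your strategy is sound and shares the same two ingredients as the paper---Koll\'ar-type duality and a cyclic (index-one) cover to kill the $\bQ$-Cartier obstruction---but the organisation differs and the paper's is more economical.

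The paper proceeds in the opposite order. It first reduces, locally on $W$, to the case where both $K_W$ and $C$ are Cartier by taking a finite cover $p\colon W'\to W$ \'etale in codimension one; the induced $q\colon V'\to V$ is then \'etale by Zariski purity (using that $V$ is smooth and $g$ equidimensional), so $V'$ is again smooth with $\omega_{V'}\cong\sO_{V'}$. On $W'$ the divisor $C'$ is Cartier, so the projection formula reduces everything to $R^ig'_*\sO_{V'}=R^ig'_*\omega_{V'}$. At this point the paper does \emph{not} run your full Grothendieck-duality/splitting/$\mathcal{E}xt$-vanishing argument; it simply invokes (after compactifying) Koll\'ar's identification of the torsion-free part of $R^{i}\bar g_*\sO_{\bar V}$ with $\sH om(R^{\,r-i}\bar g_*\omega_{\bar V},\,\omega_{\bar W})$, which is manifestly reflexive because $\omega_{W'}$ is invertible in the Gorenstein case. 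Reflexivity then descends to $W$ because $R^ig_*\sO_V(E)$ is a direct summand of $p_*R^ig'_*\sO_{V'}(E')$ and finite pushforward preserves reflexivity.

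The place where your outline is thin is the phrase ``descending torsion-freeness and the decomposition theorem.'' You do not say how, and descending the splitting $Rg_*\sO_V(\pm E)\simeq\bigoplus R^ig_*\sO_V(\pm E)[-i]$ through the cover is not automatic: it amounts to showing that a retract of a formal object in $D^b(W)$ is formal, which here can be justified via the $G$-eigen\-decomposition in characteristic zero but still requires an argument. The cleaner move---and what the paper actually does---is not to descend these intermediate properties at all: run the duality argument upstairs on $W'$ to obtain reflexivity there, and descend \emph{reflexivity} directly via the direct-summand trick. Reorganised this way your proof goes through; your Grothendieck-duality route even yields the stronger conclusion that $R^ig'_*\sO_{V'}(E')$ is maximal Cohen--Macaulay on $W'$, but for the theorem as stated the paper's one-line invocation of Koll\'ar's $\sH om$-duality on the Gorenstein cover is all that is needed.
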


In particular, if $f:M\to X$ is a Lagrangian fibration, then $R^if_*\sO_M$ is reflexive. Similarly to the proof of \cite[Thm. 1.3]{Mat05}, we can show that $R^if_*\sO_M\cong \Omega^{[i]}_X$. Afterwards, we will prove the following result (see section \ref{Cohomology properties}).

\begin{thm}
\label{thm-cohomology}
Let $f:M\to X$ be a Lagrangian fibration from a complex projective irreducible symplectic manifold $M$ to a normal projective variety $X$. Let $H$ be a  $\mathbb{Q}$-ample integral Weil divisor in $X$, and let $D=f^*H$. Then for all $j>0$ and $i\geqslant 0$, we have $$h^j(X, R^if_*(\sO_M(D)))=0  \ \mathrm{and}\  h^i(M,\sO_M(D)) = h^0(X,\Omega_X^{[i]} [\otimes]\sO_X(H)).$$
\end{thm}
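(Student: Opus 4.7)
The plan is to deduce both equalities from a Koll\'ar-type vanishing for $R^if_*\omega_M$, combined with the identification $R^if_*\sO_M\cong \Omega_X^{[i]}$ recalled just before the statement. I will first identify $R^if_*\sO_M(D)$ as a concrete reflexive sheaf on $X$, then establish the higher cohomology vanishing, and finally derive the stated formulas from the Leray spectral sequence.

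Since $f$ is a Lagrangian fibration, it is equidimensional, so Theorem~\ref{thm-reflexive-higher-direct-image} applied with $C=H$ and $E=D$ shows that $R^if_*\sO_M(D)$ is reflexive on $X$. On the open subset $U\subseteq X$ where $\sO_X(H)$ is invertible and $f$ is flat, the projection formula gives $R^if_*\sO_M(D)|_U \cong R^if_*\sO_M|_U\otimes\sO_U(H)$. Since $\codim(X\setminus U)\ge 2$ and both sheaves are reflexive, this identification extends to a global isomorphism $R^if_*\sO_M(D)\cong R^if_*\sO_M[\otimes]\sO_X(H)\cong \Omega_X^{[i]}[\otimes]\sO_X(H)$ using Matsushita's identification.

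For the vanishing $H^j(X, R^if_*\sO_M(D))=0$ with $j>0$, I use that $\omega_M\cong \sO_M$: were $H$ Cartier ample, Koll\'ar's vanishing theorem \cite{Kol86b} applied to $f$ and the line bundle $\sO_X(H)$ would give the result at once. To handle the merely $\mathbb{Q}$-Cartier nature of $H$, I choose $N\ge 1$ with $NH$ a very ample Cartier divisor, and form a cyclic cover $\pi:X'\to X$ of degree $N$ associated to a general section of $\sO_X(NH)$; since $X$ is $\mathbb{Q}$-factorial klt, one checks that $X'$ is normal, Cohen-Macaulay and $\mathbb{Q}$-Gorenstein, and that the pullback of $H$ becomes Cartier and ample on $X'$. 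I take a resolution $\tilde M'\to M\times_X X'$ with $\tilde M'$ smooth projective, and apply Koll\'ar's theorem to $\tilde f':\tilde M'\to X'$ and the Cartier ample pullback of $H$ to obtain $H^j(X', R^i\tilde f'_*\omega_{\tilde M'}\otimes \sO_{X'}(\pi^{[*]}H))=0$ for $j>0$. Pushing down via the finite map $\pi$ and extracting $R^if_*\sO_M(D)$ as a direct summand of the pushed-forward sheaf yields the required vanishing on $X$.

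Given this vanishing, the Leray spectral sequence $E_2^{j,i}=H^j(X, R^if_*\sO_M(D))\Rightarrow H^{i+j}(M,\sO_M(D))$ degenerates at $E_2$ with only the $E_2^{0,i}$-terms contributing, so $h^i(M,\sO_M(D))=h^0(X, R^if_*\sO_M(D))=h^0(X, \Omega_X^{[i]}[\otimes]\sO_X(H))$. The main obstacle is the vanishing step: one must set up the cyclic cover $X'$ and the resolution $\tilde M'$ carefully so that Koll\'ar's hypotheses hold and so that the push-forward under $\pi$ genuinely contains $R^if_*\sO_M(D)$ as a summand. A cleaner alternative is to invoke an existing generalization of Koll\'ar's vanishing theorem (via Hodge modules or Fujino's results) to reflexive sheaves associated to $\mathbb{Q}$-Cartier ample Weil divisors on klt bases, which bypasses the explicit cover.
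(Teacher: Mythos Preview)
Your identification $R^if_*\sO_M(D)\cong \Omega_X^{[i]}[\otimes]\sO_X(H)$ via reflexivity plus the projection formula on the Cartier locus, and the Leray spectral sequence deduction at the end, are correct and coincide with the paper's Proposition~\ref{prop-reflexive-char-0}.

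For the vanishing $H^j(X,R^if_*\sO_M(D))=0$ your route diverges from the paper's, and as written it has a gap. The paper does \emph{not} argue by cyclic covers in general: it spreads $f$ out over a finitely generated $\mathbb{Z}$-algebra, shows the closed fibres $\cM_t$ are Frobenius-split (via \cite{Dru12}), proves the vanishing in positive characteristic by a Frobenius argument (Lemma~\ref{lem-vanishing-by-frobenius}), and then transports it back to characteristic zero by base-change (Lemma~\ref{lem-vanishing-from-general-to-generic}). The cyclic-cover trick you describe is precisely the paper's Lemma~\ref{lem-vanishing-dim-2}, which is stated and used only when $\dim X=2$ (Remark~\ref{rem-vanishing-dim-2}), where a general $B\in|NH|$ can be chosen inside the smooth locus of $X$.

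The concrete gap in your version is the step ``take a resolution $\tilde M'\to M\times_X X'$ and extract $R^if_*\sO_M(D)$ as a direct summand''. Once you resolve, $\omega_{\tilde M'}$ differs from the pullback of $\omega_M\cong\sO_M$ by exceptional divisors, and there is no reason for $R^if_*\sO_M(D)$ to sit as a summand inside $\pi_*\bigl(R^i\tilde f'_*\omega_{\tilde M'}\otimes\sO_{X'}(\pi^{[*]}H)\bigr)$. The way Lemma~\ref{lem-vanishing-dim-2} avoids this is by never resolving: one forms the cyclic cover $q:M'\to M$ branched along $A=f^*B$, and because $f^*|NH|$ is basepoint-free on the smooth variety $M$, Bertini gives $A$ smooth, hence $M'$ is already smooth and the splitting $q_*\omega_{M'}\cong\omega_M\otimes\bigoplus_{r=0}^{N-1}\sO_M(rD)$ yields the summand directly. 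If you replace ``resolution of $M\times_X X'$'' by ``the cyclic cover of $M$ along $f^*B$, which is smooth'', and check that $\pi^*H$ is Cartier on the induced cover $X'$ of $X$, your outline becomes a genuine alternative to the paper's reduction modulo $p$; the paper simply does not pursue the covering argument beyond the surface base.
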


As a corollary, we conclude that if $\mathrm{dim}\, M=4$, then  $$h^0(X,\sO_X(H)) -h^0(X, \Omega_X^{[1]} [\otimes] \sO_X(H))  +h^0(X,\sO_X(H+K_X)) = 3$$ for any $\mathbb{Q}$-ample Weil divisor $H$ on $X$. In the end, we will   prove the following classification result on rational surfaces and   deduce Theorem \ref{main-thm} (see section \ref{Rational surfaces}).

\begin{thm}
\label{thm-rat-surface}
Let $X$ be a complex Fano surface with klt singularities whose smooth locus is algebraically simply connected. Assume that $X$ has Picard number $1$, and   $$h^0(X,\sO_X(H)) -h^0(X, \Omega_X^{[1]} [\otimes] \sO_X(H))  +h^0(X,\sO_X(H+K_X)) = 3$$ for any $\mathbb{Q}$-ample integral Weil divisor $H$. Then either $X \cong \p^2$  or $X \cong S^n(E_8)$.
\end{thm}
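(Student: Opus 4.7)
The plan is to convert the assumed cohomological identity into a numerical constraint on $X$, and then invoke the short classification of klt Fano surfaces of Picard number one with algebraically simply connected smooth locus.

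First I reduce to Euler characteristics. Fix a $\mathbb{Q}$-ample integral Weil divisor $A_0$ on $X$. Replacing $H$ by $H+mA_0$ with $m\gg 0$, Serre vanishing gives $h^j=0$ for $j>0$ for each of the three sheaves $\sO_X(H+mA_0)$, $\Omega^{[1]}_X[\otimes]\sO_X(H+mA_0)$ and $\omega_X\otimes \sO_X(H+mA_0)$. Hence each $h^0$ equals the corresponding Euler characteristic and the hypothesis becomes
\[
\chi(X,\sO_X(H)) - \chi(X,\Omega^{[1]}_X[\otimes]\sO_X(H)) + \chi(X,\omega_X\otimes\sO_X(H)) = 3
\]
for all sufficiently ample $H$. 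Both sides are polynomial in $H$ by asymptotic Riemann--Roch, so this identity of Hilbert polynomials holds for every $\mathbb{Q}$-ample integral Weil $H$.

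Next I compute the three Euler characteristics on the minimal resolution $\pi\colon Y\to X$. Since $X$ has klt hence quotient (in particular rational) singularities, $\pi_*\sO_Y=\sO_X$, $R^i\pi_*\sO_Y=0$ for $i>0$, and $\pi_*\Omega^1_Y\cong \Omega^{[1]}_X$. Writing $\pi^*H = \bar H + \D$ with $\bar H$ integral on $Y$ and $\D$ exceptional $\mathbb{Q}$-divisor, I use the projection formula and the Leray spectral sequence to translate each Euler characteristic on $X$ into a Riemann--Roch computation on $Y$, up to corrections coming from $R^1\pi_*\Omega^1_Y$ (supported on the singular locus). On a smooth surface the analogous combination equals $c_2(Y) = e(Y)$, independently of the twist, by Hirzebruch--Riemann--Roch. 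Putting these together rewrites the identity in the form
\[
e(Y) - \sum_{x\in X^{\textup{sing}}}\d(x) = 3,
\]
where $\d(x)$ is a local invariant depending only on the analytic type of the quotient singularity at $x$. In particular when $X$ is smooth this forces $e(X)=3$, and combined with Fano and Picard number one this yields $X\cong \p^2$ by the classification of smooth del Pezzo surfaces.

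Finally I classify the singular case. The hypothesis that the smooth locus is algebraically simply connected sharply limits the admissible configurations of quotient singularities, since every non-trivial local fundamental group produces an \'etale cover of the smooth locus that must not extend globally. Combined with Picard number one and the numerical constraint from Step~2, the classification of log del Pezzo surfaces of Picard number one leaves a single singular candidate: a Gorenstein rational Fano surface with one Du Val singularity of type $E_8$. As recalled earlier this surface is unique, yielding $X\cong S^n(E_8)$. A direct verification that both $\p^2$ and $S^n(E_8)$ satisfy the hypotheses closes the argument.

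The main difficulty is Step~2: one must compute the local correction $\d(x)$ precisely, that is, understand $R^1\pi_*\Omega^1_Y$ at each singular point and the discrepancy in relating $\Omega^{[1]}_X$ with $\pi_*\Omega^1_Y$ after twisting by the $\mathbb{Q}$-Cartier but non-Cartier sheaf $\sO_X(H)$. Once this accountancy is done, Step~3 reduces to a short check against the existing classification of log del Pezzo surfaces of Picard number one with simply connected smooth locus.
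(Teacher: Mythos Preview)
Your approach has a genuine gap, and it is precisely at the point you flag as ``the main difficulty''.

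The conversion from the $h^0$-identity to an Euler characteristic identity throws away exactly the information needed to finish. Once you pass to $\chi$, the constraint you obtain depends only on the topological type of $X$ and the analytic germs of its singularities; in particular your local invariant $\d(x)$ is determined by the singularity type alone. But there are \emph{two} non-isomorphic Fano surfaces of Picard number one with a single $E_8$ Du Val point and simply connected smooth locus, namely $S^c(E_8)$ and $S^n(E_8)$. They share the same minimal resolution topologically and the same $E_8$ germ, so any identity of Euler characteristics holds for one if and only if it holds for the other. Your claim ``this surface is unique'' is false, and your numerical approach cannot separate the two candidates. The paper's whole point in this section is that the hypothesis, taken at the specific non-ample value $H=-K_X$, distinguishes them: on $S^c(E_8)$ the anticanonical pencil is an \emph{isotrivial} family of elliptic curves (all with $j=0$), which forces $h^0(X,\Omega^{[1]}_X\otimes\sO_X(-K_X))\geqslant 1$ and violates the identity; on $S^n(E_8)$ the family is non-isotrivial and the identity survives.

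Beyond this, Steps~2 and~3 are really outlines rather than arguments. You neither compute $\d(x)$ nor explain why the resulting numerical equation, together with the simple-connectedness hypothesis, cuts the list of log del Pezzo surfaces of Picard number one down to just $\p^2$ and the $E_8$ surfaces (and not, say, the $A_1$, $A_4$, $D_5$, $E_6$, $E_7$, or $A_1{+}A_2$ cases that the paper's classification lists). The paper avoids this entirely: it first uses the $h^0$-hypothesis to show $\mathrm{Cl}(X)=\bZ\cdot[T]$ for a minimal curve $T$, then splits into the cases $T\sim -K_X$ versus $T\not\sim -K_X$, and in each case runs a geometric argument via an auxiliary surface $Z\to X$ carrying a $\p^1$-fibration to reach $\{S^c(E_8),S^n(E_8)\}$ or $\p^2$ respectively, before excluding $S^c(E_8)$ as above. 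None of this is recoverable from a $\chi$-identity.
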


Note that the converse of Theorem \ref{thm-rat-surface} also holds, see Remark \ref{rem-surface-E8}.

\section{Notation  of the paper}
In this paper, a variety is an integral scheme of finite type over a field. If $X$ and $T$ are two schemes, then $X$ is called a $T$-scheme if there is a morphism $X\to T$. Let $A$ be a ring. Then we may call $X$ a $A$-scheme if $X$ is a $(\mathrm{Spec}\, A)$-scheme. 

Let $X$, $Y$ be two $T$-schemes, let $\sF$ and $\sG$ be two quasi-coherent sheaves on $X$,   let $f:X \to Y$ be a morphism of $T$-schemes, let $\alpha: \sF \to \sG$ be a morphism of sheaves, let $t$ be any point of $T$, and let $\kappa$ be the residue field of $t$. There is a natural morphism $\iota: \mathrm{Spec}\,  \kappa \to T$. We denote the fiber product of $X$, $Y$, $\sF$, $\sG$, $f$ and $\alpha$ under the base change $\iota$ by $X_t$, $Y_t$, $\sF_t$, $\sG_t$, $f_t$ and $\alpha_t$. If $U$ is an open subset of $T$, similarly we denote the fiber product of $X$, $Y$, $\sF$, $\sG$, $f$ and $\alpha$ under the base change $U \hookrightarrow T$ by $X_U$, $Y_U$, $\sF_U$, $\sG_U$, $f_U$ and $\alpha_U$.

If $C^{\bullet}$ is a complex (of modules, of sheaves), then we denote the $i$-th cohomology of $C^{\bullet}$ by $H^i(C^{\bullet})$.   If $\sF$ is a coherent sheaf over some noetherian scheme $X$, then we denote by $H^i(X,\sF)$ the $i$-th cohomology group of the sheaf $\sF$. If $f:X\to Y$ is a morphism of noetherian schemes, then we denote the $i$-th higher direct image of $\sF$ by $R^if_*\sF$. Moreover, if $X$ is a variety defined over a field $k$, we denote by $h^i(X,\sF)$ the dimension of $H^i(X,\sF)$ over $k$. If $X$ is a complex algebraic variety, we denote  by $b_i(X)$ the $i$-th Betti number of $X$.

For a coherent sheaf $\mathscr{F}$ on a normal variety $X$ of dimension $n$, we denote by $\mathscr{F}^{*}$ the dual of $\mathscr{F}$. The sheaf $\sF$ is said to be reflexive if   $\sF \cong \sF^{**}$. In particular, $\sF^{**}$ is reflexive, and we  call it the reflexive hull of $\sF$. We denote $(\mathscr{F}^{\otimes m})^{**}$  by $\mathscr{F}^{[\otimes m]}$  for any $m>0$. For two coherent sheaves $\mathscr{F}$ and $\mathscr{G}$, let $\mathscr{F} [\otimes] \mathscr{G} = (\mathscr{F} \otimes \mathscr{G})^{**}$. We denote by $\Omega_X^1$ the sheaf of K\"ahler differentials of $X$. Let $\Omega_X^{[i]}=(\Omega_X^i)^{**}$ for all $i>0$. We denote by $K_X$ a canonical divisor of $X$. Write $\omega_X=\sO_X(K_X) \cong \Omega_X^{[n]}$. If $X$ is complex projective Cohen-Macaulay variety, then $\omega_X$ is   the dualizing sheaf.

A normal complex variety $X$ is said to be Gorenstein if all its local rings are Gorenstein rings. Note that a normal complex variety $X$ is Gorenstein if and only if $X$ is Cohen-Macaulay  and $K_X$ is Cartier. A normal complex variety $X$ is said to be $\mathbb{Q}$-Gorenstein if $K_X$ is a $\mathbb{Q}$-Cartier divisor.

The Picard number of  a normal   variety $X$ is the dimension of the $\mathbb{Q}$-vector space $\mathrm{Pic}(X)\otimes _{\mathbb{Z}} \mathbb{Q}$.  The variety is called a Fano variety if $-K_X$ is a $\mathbb{Q}$-ample divisor. If $X$ is a Fano variety, and $T$ is a curve in $X$, then $T$ is called a minimal curve if its intersection number with $-K_X$ is minimal. We denote by $\mathrm{Cl}(X)$ the Weil divisor class group of $X$.

A fibration $f:X\to Y$ between normal varieties is a surjective proper morphism whose fibers are connected. A Fano fibration is a projective fibration $f:X \to Y$ such that $-K_X$ is relatively ample. A Mori fibration is a Fano fibration whose relative Picard number is $1$. 

Let $f:X \to Y$ be an equidimensional fibration between normal varieties. Let $D$ be a Weil divisor on $Y$. If $Y_{0}$ is the smooth locus of $Y$, then $D|_{Y_{0}}$ is a Cartier divisor. Let $X_0=f^{-1}(Y_{0})$. Then $(f|_{X_0})^*(D|_{Y_{0}})$ is a Cartier divisor in $X_0$. Let $f^*D$ be the closure of $(f|_{X_0})^*(D|_{Y_{0}})$ in $X$. Then $f^*D$ is an integral Weil divisor in $X$. Assume that $D$ is $\mathbb{Q}$-Cartier and let $k$ be a positive integer   such that $kD$ is  Cartier. Then we may also define $f^*D$  by $f^*D=\frac{1}{k}f^*(kD)$. These two definitions of $f^*D$ coincide   under the assumption that $f$ is equidimensional.

\section{Cohomology properties}
\label{Cohomology properties}

The aim of this section is to prove  Theorem \ref{thm-cohomology}. One of the application of this theorem is the following corollary.

\begin{cor}
\label{cor-cohomology}
Let $f:M\to X$ be a Lagrangian fibration from a  projective irreducible symplectic manifold $M$ of dimension $2n$ to a normal projective variety $X$. Let $H$ be a  $\mathbb{Q}$-ample integral Weil divisor in $X$. Then  $\sum _{i=0}^{n} (-1)^{i}h^0(X,\Omega_X^{[i]} [\otimes]\sO_X(H)) = n+1$.
\end{cor}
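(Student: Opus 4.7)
The plan is to compute the Euler characteristic $\chi(M,\sO_M(D))$ of the line bundle attached to $D=f^*H$ in two different ways, equate them, and pin down the constant $n+1$. The first computation is the cohomological dimension count furnished by Theorem \ref{thm-cohomology}; the second is the Beauville--Bogomolov--Fujiki description of $\chi$ on an irreducible symplectic manifold.

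The first step is to unpack Theorem \ref{thm-cohomology}. Since $M$ is smooth, $D$ is Cartier and $\sO_M(D)$ is a line bundle. The vanishing $h^j(X,R^if_*\sO_M(D))=0$ for $j>0$ causes the Leray spectral sequence to degenerate, yielding $h^i(M,\sO_M(D))=h^0(X,R^if_*\sO_M(D))$. Combined with the identification $h^i(M,\sO_M(D))=h^0(X,\Omega_X^{[i]}[\otimes]\sO_X(H))$ from the same theorem, I obtain
$$\chi(M,\sO_M(D))\;=\;\sum_{i\ge 0}(-1)^i h^0\!\left(X,\Omega_X^{[i]}[\otimes]\sO_X(H)\right).$$
Because $\Omega_X^{[i]}$ is reflexive and vanishes on the smooth locus of $X$ once $i>n=\dim X$, it is zero there; hence the sum collapses to $i=0,\dots,n$, matching the left-hand side of the corollary. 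The task thus reduces to proving $\chi(M,\sO_M(D))=n+1$.

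Next I would invoke two classical facts about an irreducible symplectic manifold $M$ of dimension $2n$. The symplectic form $\Omega$ satisfies $H^0(M,\Omega_M^i)=\mathbb{C}\cdot\Omega^{i/2}$ when $i\in[0,2n]$ is even and $H^0(M,\Omega_M^i)=0$ when $i$ is odd, so Hodge symmetry yields $\chi(M,\sO_M)=n+1$. Moreover, the Huybrechts--Nieper-Wisskirchen form of Riemann--Roch, a consequence of the Fujiki relations, provides a polynomial $P\in\mathbb{Q}[t]$ of degree $n$ depending only on $M$ such that $\chi(M,L)=P(q_M(L))$ for every line bundle $L$ on $M$, where $q_M$ denotes the Beauville--Bogomolov--Fujiki form; in particular the constant term is $P(0)=\chi(M,\sO_M)=n+1$.

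It then suffices to show $q_M(D)=0$. Fujiki's formula $\int_M\alpha^{2n}=c_M\cdot q_M(\alpha)^n$ with $c_M\ne 0$ reduces this to $D^{2n}=0$, which itself follows from $D^{n+1}=f^*(H^{n+1})=0$ (the class $H^{n+1}$ being zero in the cohomology of the $n$-dimensional variety $X$). Substituting $q_M(D)=0$ into the Huybrechts--Nieper-Wisskirchen formula yields $\chi(M,\sO_M(D))=P(0)=n+1$, as required. The one step that is not formal bookkeeping is the appeal to the polynomiality of $\chi(M,-)$ in $q_M$; once that classical input is granted, the rest of the argument is immediate.
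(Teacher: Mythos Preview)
Your argument is correct and follows essentially the same route as the paper: reduce to $\chi(\sO_M(D))=n+1$ via Theorem~\ref{thm-cohomology}, observe $q_M(D)=0$ because $D^{2n}=0$, and invoke the polynomiality of $\chi$ in $q_M$ (as in \cite[1.11]{Huy99}) to conclude $\chi(\sO_M(D))=\chi(\sO_M)=n+1$. The only cosmetic difference is that you justify the vanishing for $i>n$ via $\Omega_X^{[i]}=0$ and explicitly compute $\chi(\sO_M)$ from the Hodge numbers, whereas the paper states both without elaboration; your remark that $D^{n+1}=f^*(H^{n+1})=0$ should be read after clearing denominators, since $H$ is only $\mathbb{Q}$-Cartier.
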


\begin{proof}[{Proof of Corollary \ref{cor-cohomology}}]
Let $D=f^*H$. By Theorem \ref{thm-cohomology}, we have $h^i(M,\sO_M(D)) =h^0(X,\Omega_X^{[i]} [\otimes]\sO_X(H))$ for all  $i\geqslant 0$. Hence we only have to prove that $\sum _{i=0}^{n} (-1)^{i}h^i(M,\sO_M(D)) = n+1$. Note that Theorem \ref{thm-cohomology} also shows that $h^i(M,\sO_M(D))=0$ for $i >n$. Hence it is enough to prove that $$\sum _{i=0}^{2n} (-1)^{i}h^i(M,\sO_M(D)) = n+1.$$ However, $\sum _{i=0}^{2n} (-1)^{i}h^i(M,\sO_M(D))$ is the Euler characteristic $\chi(\sO_M(D))$ of the Cartier divisor $D$. Thus it suffices to prove that $\chi(\sO_M(D))=n+1$. 

Let $q_M$ be the Beauville-Bogomolov quadric form on $M$. Then $q_M(D)=0$ since the intersection number $D^{2n}$ is zero. Moreover, we have $\chi(\sO_M(D))=\sum \frac{a_i}{(2i)!}q_M(D)^i$, where the $a_i$'s are complex numbers depending only on $M$ (see \cite[1.11]{Huy99}). Hence $\chi(D) = \chi(\sO_M) =n+1$.
\end{proof}

\textit{Outline of the proof of Theorem \ref{thm-cohomology}.} We will first show that $R^if_*\sO_M(D) \cong \Omega_X^{[i]} [\otimes]\sO_X(H)$ for all $i\geqslant 0$ (see Proposition \ref{prop-reflexive-char-0}). Hence if $h^j(X, R^if_*\sO_M(D))=\{0\}$ for all $j>0$ and $i\geqslant 0$, then the Leray spectral sequence calculating $H^i(M,\sO_M(D))$ degenerates at $E_2$, and we obtain that $h^i(M,\sO_M(D)) = h^0(X,\Omega_X^{[i]} [\otimes]\sO_X(H))$ for all $i\geqslant 0$. Thus, in order to prove Theorem \ref{thm-cohomology}, it is enough to prove that $h^j(X, R^if_*\sO_M(D))=0$ for $j>0$ (Note that if $D$ is Cartier, then this is true by \cite[Thm. 2.1]{Kol86a} for $\omega_M\cong \sO_M$). 

If we assume that $\mathrm{dim}\, M=4$, then we can prove this vanishing by covering tricks and a theorem due to Koll\'ar (See Remark \ref{rem-vanishing-dim-2}). In higher dimension, we will prove  by reduction modulo $p$. We can reduce to the case when $f:M\to X$, $H$ and $D$ are defined over an algebraic number field $K$. In this case, there is a $\mathbb{Z}$-algebra $A\subseteq K$, which is a localisation of the ring of  integers of $K$, of Krull dimension $1$  such that the field of fraction of $A$ is $K$, and the coefficients of the equations defining $f:M\to X$, $H$ and $D$ are contained in $A$. Let $T=\mathrm{Spec}\, A$, and let $\eta$ be its generic point. Then $f$ induces a morphism $\phi: \cM \to \cX$ of $T$-schemes such that $\phi_{\eta}$ coincides with $f$ after a field extension. Let $\cD $ be the divisor on $\cM$  induced by $D$, and let $\cH $ be the divisor on $\cX$  induced by $H$. We will show that $h^j(\cX_t, R^i(\phi_t)_*\sO_{\cM_t}(\cD_t))=0$ for all $j>0$, $i\geqslant 0$ and general closed point $t\in T$. By a base change property (See Lemma \ref{lem-vanishing-from-general-to-generic}), this implies that $h^j(\cX_{\eta}, R^i(\phi_{\eta})_*\sO_{\cM_{\eta}}(\cD_{\eta}))=0$. Then we obtain $h^j(X, R^if_*\sO_M(D))=0$ since $f$ coincide with $\phi_{\eta}$ after a field extension, and a field extension is a flat base change.

We will divide this section into four parts. In the first part, we will develop some tools for the proof of Theorem \ref{thm-cohomology}. In the second part, we will show the reflexivity of some higher direct images. Note that for general closed $t\in T$, the varieties $\cX_t$ and $\cM_t$ are defined over a field of positive characteristic. We will prove that $h^j(\cX_t, R^if_*\sO_{\cM_t}(\cD_t))=0$ in the third part.  In the last subsection we will complete the proof of Theorem \ref{thm-cohomology}.

\subsection{Base change properties}  In this subsection, we will prove some properties on cohomology and base change. The aim is to prove the following proposition.

\begin{prop}
\label{prop-base-change}
Let $g:V\to W$ be a projective morphism of integral $T$-schemes of finite type, where $T$ is an integral  noetherian scheme. Let $\sF$ be a coherent sheaf on $V$. Then there is a non-empty open subset $T_0$ of $T$ such that  there is a natural isomorphism $(R^ig_*\sF)|_{W_t} \cong R^i(g_{t})_{*}\sF_t$ for any point $t\in T_0$ and any $i\geqslant 0$, where $g_t: V_t \to W_t$ is the restriction of $g$ over $t$, and $\sF_t$ is the restriction of $\sF$ on $V_t$.
\end{prop}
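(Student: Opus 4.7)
The plan is to compute both sides via an explicit \v{C}ech complex and then invoke Grothendieck's generic freeness lemma to arrange that passing to cohomology commutes with restriction to fibers of $W\to T$. The assertion is local on $W$, and $W$ is of finite type over the noetherian scheme $T$, so after covering $W$ by finitely many affine opens and intersecting the finitely many opens of $T$ produced on each piece, it suffices to treat the case $W=\Spec B$ affine; shrinking $T$, I may also assume $T=\Spec A$.

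Since $g$ is projective, I would fix a closed immersion $V\hookrightarrow \bP^N_W$ and take $\cU=(U_i)_{i=0}^{N}$ to be the pull-back to $V$ of the standard affine cover of $\bP^N_W$. The alternating \v{C}ech complex $C^{\bullet}=C^{\bullet}(\cU,\sF)$ is then a bounded complex of finitely generated $B$-modules whose cohomology computes $R^i g_*\sF$; moreover, the pulled-back cover of $V_t$ computes $R^i(g_t)_*\sF_t$ from $C^{\bullet}\otimes_A k(t)$. Thus it is enough to find a non-empty open $T_0\subseteq T$ such that, for every $t\in T_0$ and every $i$, the natural map $H^i(C^{\bullet})\otimes_A k(t)\to H^i(C^{\bullet}\otimes_A k(t))$ is an isomorphism; functoriality of the \v{C}ech construction then identifies it with the base change morphism $(R^i g_*\sF)|_{W_t}\to R^i(g_t)_*\sF_t$.

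To achieve this I would apply generic freeness to the finite list of finitely generated $B$-modules consisting of all $C^i$, all kernels $\ker d^i$, all images $\mathrm{im}\, d^{i-1}$, and all cohomology modules $H^i(C^{\bullet})$. Each is finitely generated over the finitely generated $A$-algebra $B$, so generic freeness gives a non-empty open of $T$ over which the given module is locally free of finite rank as an $A$-module; intersecting the finitely many resulting opens yields $T_0$. Over $T_0$ the short exact sequences
\begin{equation*}
0\lra \ker d^i\lra C^i\lra \mathrm{im}\, d^i\lra 0,\qquad 0\lra \mathrm{im}\, d^{i-1}\lra \ker d^i\lra H^i(C^{\bullet})\lra 0
\end{equation*}
remain exact after tensoring with $k(t)$ for any $t\in T_0$, which immediately yields the desired isomorphism.

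The obstacle is essentially bookkeeping: one must check that every module whose flatness is needed is finitely generated over $B$ so that generic freeness applies (automatic because $C^{\bullet}$ is a bounded complex of coherent sheaves), and that only finitely many applications are required (automatic by the quasi-compactness of $W$ and the bounded length $N+1$ of $C^{\bullet}$). No deeper vanishing or flat base change input is needed.
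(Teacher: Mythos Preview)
Your overall strategy—compute both sides by a \v Cech complex and arrange via generic flatness that taking cohomology commutes with restriction to $W_t$—is sound and close in spirit to the paper's argument, but there is a real error in the execution. You assert that the \v Cech modules $C^i$, and hence their kernels and images under the differentials, are finitely generated $B$-modules. They are not: $C^i=\bigoplus_{|I|=i+1}\Gamma(U_I,\sF)$, and each summand is finitely generated only over the finitely generated $B$-algebra $\Gamma(U_I,\sO_V)$, not over $B$ itself (unless $g$ is finite). Generic freeness still applies to each $C^i$ (take the algebra to be $\prod_I\Gamma(U_I,\sO_V)$), and it applies to $H^i(C^{\bullet})$ because projectivity of $g$ makes this a finitely generated $B$-module. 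But the differentials $d^i$ are only $B$-linear, not linear over these larger rings, so $\ker d^i$ and $\mathrm{im}\, d^i$ are \emph{a priori} just $B$-submodules of infinitely generated $B$-modules, and you have given no reason why generic freeness applies to them. (Your parenthetical ``locally free of finite rank as an $A$-module'' is likewise false for $C^i$.)

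The repair is short: do not invoke generic freeness on $\ker d^i$ and $\mathrm{im}\, d^i$ at all. After shrinking $T$ so that every $C^i$ and every $H^i(C^{\bullet})$ is $A$-flat, run a descending induction from the top of the bounded complex: $\mathrm{im}\, d^N=0$ is flat, hence $\ker d^N$ is flat from $0\to\ker d^N\to C^N\to\mathrm{im}\,d^N\to 0$; then $\mathrm{im}\,d^{N-1}$ is flat from $0\to\mathrm{im}\,d^{N-1}\to\ker d^N\to H^N\to 0$; and so on. Once all four families of modules are flat, your two short exact sequences remain exact after $\otimes_A k(t)$ and the conclusion follows. The paper takes a different route to sidestep this issue: it replaces $C^{\bullet}$ by a quasi-isomorphic bounded-above complex $L^{\bullet}$ of finitely generated free modules over the ring of $W$ (Lemma~\ref{lemma-loc-free-complex}), applies the EGA constructibility result directly to $L^{\bullet}$, and then transfers the conclusion back to $C^{\bullet}$ via a comparison lemma for quasi-isomorphic complexes of flat modules (Lemma~\ref{lem-flat-complex-base-change}).
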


\centerline{
\xymatrix{
V_t  \ar[r] \ar[d]^{g_t}
&V   \ar[d]^g
\\
W_t  \ar[r]\ar[d]  
&W \ar[d]
\\ 
{\{}t{\}} \ar[r] 
&T
}
}

This proposition  is similar to \cite[Thm. III.12.8]{Har77}.  In fact, our proof follows that of \cite[Thm. III.12.8]{Har77}. We will need two lemmas.

\begin{lemma}
\label{lemma-loc-free-complex}
Let $A$ be a noetherian ring. Let $V$ be a projective $A$-scheme of finite type, and let $\sF$ be a coherent sheaf on $V$. Then there is a complex, bounded from above, of finitely generated free $A$-modules $L^{\bullet}$ such that $H^i(L^{\bullet}) \cong H^i(V, \sF)$ for all $i\geqslant 0$.
\end{lemma}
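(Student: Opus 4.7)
The plan is to compute $H^\bullet(V,\sF)$ via \v{C}ech cohomology on a finite affine cover of $V$, which yields a bounded complex of $A$-modules with finitely generated cohomology, and then to construct $L^\bullet$ by descending induction as a bounded-above complex of finitely generated free $A$-modules quasi-isomorphic to this \v{C}ech complex.

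For the setup: since $V \to \Spec A$ is projective, I fix a closed immersion $V \hookrightarrow \bP^N_A$ and pull back the standard affine cover $\{D_+(x_i)\}_{i=0}^N$ of $\bP^N_A$ to obtain a finite affine open cover $\mathcal{U}$ of $V$. The \v{C}ech complex $C^\bullet := C^\bullet(\mathcal{U},\sF)$ is then a complex of $A$-modules concentrated in degrees $0,\ldots,N$, and since $\sF$ is coherent while all overlaps $U_{i_0 \cdots i_p}$ are affine, $H^i(C^\bullet) \cong H^i(V,\sF)$ for all $i$. By Serre's finiteness theorem each $H^i(V,\sF)$ is a finitely generated $A$-module, vanishing for $i > N$.

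I now build $L^\bullet$ together with a morphism of complexes $\phi^\bullet : L^\bullet \to C^\bullet$ by descending induction, maintaining as inductive hypothesis at stage $n \le N$ that $L^n,\ldots,L^N$ are finitely generated free $A$-modules forming a complex with compatible maps $\phi^i: L^i \to C^i$ such that, extending by zero in degrees $< n$, the induced map $H^i(L^\bullet) \to H^i(C^\bullet)$ is an isomorphism for $i > n$ and a surjection for $i = n$. To descend one step, set $K := \ker\bigl(\ker(d_L^n) \twoheadrightarrow H^n(C^\bullet)\bigr)$; this is finitely generated because $A$ is noetherian and $L^n$ is. For each generator $x$ of $K$ pick $y_x \in C^{n-1}$ with $d_C^{n-1}(y_x) = \phi^n(x)$, and for each generator of the finitely generated $A$-module $H^{n-1}(C^\bullet)$ pick a lift in $\ker(d_C^{n-1})$. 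Take $L^{n-1}$ to be the free $A$-module on the disjoint union of these two sets, and define $d_L^{n-1}$ to send the first set to $K \subseteq L^n$ and the second to $0$, while $\phi^{n-1}$ sends them to the $y_x$'s and the chosen lifts respectively. A direct check then shows that the cohomology map becomes an isomorphism in degree $n$ and a surjection in degree $n-1$. The resulting $L^\bullet$ has $L^i = 0$ for $i > N$, and since for each fixed $i$ the map on $H^i$ stabilises to an isomorphism once the induction has descended past $i$, we obtain $H^i(L^\bullet) \cong H^i(V,\sF)$ for all $i \ge 0$.

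The main technical point is that at each descent step the new data ($K$ and $H^{n-1}(C^\bullet)$) are finitely generated, so that $L^{n-1}$ can be chosen of finite rank; this rests on noetherianity of $A$ together with Serre's finiteness theorem. The rest is routine homological algebra, paralleling the construction in the first half of Hartshorne III.12.2 without needing the flatness hypothesis used there for base-change applications.
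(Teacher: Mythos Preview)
Your proof is correct and follows essentially the same route as the paper's: compute $H^\bullet(V,\sF)$ as the cohomology of a bounded \v{C}ech complex $C^\bullet$ on a finite affine cover, observe that the cohomology modules are finitely generated by Serre's theorem, and then produce the bounded-above complex $L^\bullet$ of finite free $A$-modules with the same cohomology. The only difference is that the paper simply cites \cite[Thm.~III.12.3]{Har77} for this last step, whereas you reproduce the descending-induction construction of that result explicitly (your reference to III.12.2 should be III.12.3).
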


\begin{proof}
Let $(U_i)$ be an affine open cover of $V$. Let $C^{\bullet}$ be the \v Cech complex of $\sF$ with respect to $(U_i)$. Then $H^i(C^{\bullet}) \cong H^i(V, \sF)$ for all $i\geqslant 0$ (See \cite[Thm. III.4.5]{Har77}). Since $V$ is projective, $H^i(V, \sF)$ is a finitely generated $A$-module for all $i$ (See \cite[Thm. III.5.2]{Har77}). Hence, by \cite[Thm. III.12.3]{Har77}, there is a complex of finitely generated free $A$-modules $L^{\bullet}$ such that $H^i(L^{\bullet}) \cong H^i(C^{\bullet})$ for all $i\geqslant 0$. Hence $H^i(L^{\bullet}) \cong H^i(V, \sF)$ for all $i\geqslant 0$.
\end{proof}

\begin{lemma}
\label{lem-flat-complex-base-change}
Let $B$ be a noetherian ring, and let $A$ be a $B$-algebra of finite type. Let $C^{\bullet}$ and $L^{\bullet}$ be two complexes, bounded from above, of  $A$-modules such that  $C^j$ and $L^j$ are $B$-flat for all $j$. Assume that there is a quasi-isomorphism $L^{\bullet} \to C^{\bullet}$. Then for  any $B$-module $R$ and any $i \geqslant 0$, there is a natural isomorphism $H^i(R \otimes_B  L^{\bullet}) \cong H^i(R \otimes_B  C^{\bullet})$.
\end{lemma}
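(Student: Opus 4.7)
My plan is to reduce the claim to the acyclicity of a mapping cone and then exploit the $B$-flatness of the terms to pass that acyclicity through $R\otimes_B(-)$. Concretely, let $\mathrm{Cone}(\phi)^{\bullet}$ denote the mapping cone of the given quasi-isomorphism $\phi\colon L^{\bullet}\to C^{\bullet}$; its term in degree $j$ is $L^{j+1}\oplus C^j$. Since $\phi$ is a quasi-isomorphism, $\mathrm{Cone}(\phi)^{\bullet}$ is acyclic, and since each $L^j$ and $C^j$ is $B$-flat, every term of the cone is $B$-flat. The formation of a mapping cone commutes with the additive functor $R\otimes_B(-)$, so $R\otimes_B\mathrm{Cone}(\phi)^{\bullet}\cong\mathrm{Cone}(R\otimes_B\phi)^{\bullet}$. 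Hence it suffices to prove the following auxiliary assertion: every bounded above acyclic complex $M^{\bullet}$ of $A$-modules whose terms are $B$-flat remains acyclic after applying $R\otimes_B(-)$.

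To prove this, write $Z^j=\ker(d^j\colon M^j\to M^{j+1})$. Acyclicity provides short exact sequences
$$0\to Z^j\to M^j\to Z^{j+1}\to 0$$
for every $j$. Fix $N$ with $M^j=0$ for $j>N$; then $Z^{N+1}=0$ and $Z^N=M^N$ is $B$-flat. Descending induction on $j$ based on the standard fact that in a short exact sequence $0\to A\to M\to C\to 0$ of $B$-modules with $M$ and $C$ both $B$-flat the module $A$ is itself $B$-flat (an immediate consequence of the $\mathrm{Tor}$ long exact sequence) shows that every $Z^j$ is $B$-flat. In particular $\mathrm{Tor}_1^B(R,Z^{j+1})=0$, so tensoring the short exact sequence above with $R$ keeps it exact. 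This identifies, inside $R\otimes_B M^{\bullet}$, the image of the differential entering degree $j$ with $R\otimes_B Z^j$ and the kernel of the differential leaving degree $j$ with the same module, giving acyclicity of $R\otimes_B M^{\bullet}$.

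Applying the auxiliary statement to $\mathrm{Cone}(\phi)^{\bullet}$ shows that $R\otimes_B\phi$ is still a quasi-isomorphism, and hence it induces the desired natural isomorphisms $H^i(R\otimes_B L^{\bullet})\cong H^i(R\otimes_B C^{\bullet})$ for every $i\geqslant 0$. The one delicate point in this argument is the descending induction establishing flatness of the cocycles $Z^j$: it is precisely the hypothesis that $M^{\bullet}$ be bounded above that supplies the base case $Z^N=M^N$, and without it one could not conclude flatness of $Z^j$ in any finite number of steps. Once this induction is in hand, the rest of the proof is formal.
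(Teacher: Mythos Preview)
Your proof is correct and takes a genuinely different route from the paper's. The paper argues by descending induction on the cohomological degree~$i$: it first reduces to finitely generated $B$-modules via direct limits, then for such an $R$ chooses a presentation $0\to K\to E\to R\to 0$ with $E$ free of finite rank, tensors both complexes through this sequence (using $B$-flatness of the terms), and runs a five-lemma style diagram chase comparing the resulting long exact sequences. The noetherian hypothesis on $B$ is used there to ensure $K$ is again finitely generated, so that the induction hypothesis applies to it.

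Your approach via the mapping cone is more structural: you reduce to the single assertion that a bounded-above acyclic complex of $B$-flat modules stays acyclic under $R\otimes_B(-)$, and prove this by showing all cocycle modules $Z^j$ are themselves $B$-flat. This is cleaner and, notably, never invokes the hypotheses that $B$ be noetherian or that $A$ be of finite type over~$B$; your argument in fact establishes the lemma over an arbitrary commutative ring. The paper's approach, by contrast, avoids the cone formalism entirely and stays closer to the concrete \v Cech-complex situation in which the lemma is applied.
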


\begin{proof}
It is enough to prove the lemma for finitely generated $B$-modules since every $B$-module is the direct limit of finitely generated modules and both $\otimes$  and $H^i$ commute with direct limits.

We will prove the lemma by induction on $i$. For $i$ large enough, $C^i$ and $L^i$ are both $0$. Hence the  lemma is true for large enough $i$. Assume that $H^{i+1}(R \otimes_B  L^{\bullet}) \cong H^{i+1}(R \otimes_B  C^{\bullet})$ for any finitely generated $B$-module $R$,  we will show that $H^{i}(R \otimes_B  L^{\bullet}) \cong H^{i}(R \otimes_B C^{\bullet} )$ for any finitely generated $B$-module $R$. Fix a finitely generated $B$-module $R$, then there is a free $B$-module $E$ and a finitely generated $B$-module $K$ such that we have an exact sequence $0\to K \to E \to R \to 0$. Since $C^j$ and $L^j$ are $B$-flat for all $j$, we get a commutative diagram of complexes with exact rows

\centerline{
\xymatrix{
0 \ar[r] & K \otimes L^{\bullet} \ar[r]\ar[d] & E \otimes L^{\bullet} \ar[d] \ar[r]& R \otimes L^{\bullet} \ar[d] \ar[r] & 0\\
0 \ar[r]  & K \otimes C^{\bullet} \ar[r] & E \otimes C^{\bullet} \ar[r] & R \otimes C^{\bullet} \ar[r] & 0
}
}

By taking the cohomology, we obtain the following diagram with exact rows,

\centerline{
\xymatrix{
H^{i}(K \otimes L^{\bullet})  \ar[r] \ar[d]
& H^i(E \otimes L^{\bullet}) \ar[r] \ar[d]
& H^i(R \otimes L^{\bullet}) \ar[r] \ar[d]
& H^{i+1}(K \otimes L^{\bullet})  \ar[r] \ar[d]
& H^{i+1}(E \otimes L^{\bullet})  \ar[d]\\
H^{i}(K \otimes C^{\bullet})  \ar[r]  
&H^i(E \otimes C^{\bullet}) \ar[r]  
& H^i(R \otimes C^{\bullet}) \ar[r]
& H^{i+1}(K \otimes C^{\bullet}) \ar[r] 
& H^{i+1}(E \otimes C^{\bullet}) 
}
}

Since $E$ is free, and $L^{\bullet}$ and $C^{\bullet}$ are quasi-isomorphic, the morphism from $H^i(E \otimes L^{\bullet})$ to $H^i(E \otimes C^{\bullet})$ is an isomorphism. By induction hypothesis, the last two vertical morphisms in the last diagram are isomorphisms. Hence $H^i(R \otimes_B L^{\bullet} ) \to H^i(R \otimes_B  C^{\bullet})$ is surjective. Since $R$ is chosen arbitrarily,   this implies that the first vertical morphism  is also surjective. Hence $H^i(R \otimes_B L^{\bullet} ) \cong H^i(R \otimes_B C^{\bullet})$. This completes the induction and the proof of the lemma
\end{proof}

We will now prove Proposition \ref{prop-base-change}.

\begin{proof}[Proof of Proposition \ref{prop-base-change}]
Without lost of generality, we may assume that $T=\mathrm{Spec}\, B$ is  affine. First we assume that $W=\mathrm{Spec}\, A$ is affine. Let $(U_i)$ be a finite affine open cover of $V$. Let $C^{\bullet}$ be the \v Cech complex of $\sF$ with respect to $(U_i)$. Let $L^{\bullet}$ be the complex of finitely generated free $A$-modules given by Lemma \ref{lemma-loc-free-complex}. Then $H^i(L^{\bullet}) \cong H^i(V, \sF)$ for all $i\geqslant 0$. There is a non-empty open affine subset $T_0=\mathrm{Spec}\, B$ of $T$ such that for any point $t\in T_0$ and for any $i\geqslant 0$, we have $H^i(A_t \otimes L^{\bullet}) \cong H^i(L^{\bullet}) \otimes A_t$ (See \cite[Cor. 9.4.3]{Gro66}), where $A_t$ is the structural ring of the affine scheme $W_t$. 

Since $V\to W$ is a separated morphism, the intersection of any two affine open subsets of $V$ is still affine. Hence for any $j$, both $C^j$ and $L^j$ are  finitely generated modules over finitely generated $B$-algebras.  Moreover, both $C^{\bullet}$ and $L^{\bullet}$ are bounded. Hence by shrinking $T_0$, we may assume that both $C^j$ and $L^j$ are $B$-flat for any integer $j$ (See \cite[Thm. 6.9.1]{Gro65}). By Lemma \ref{lem-flat-complex-base-change}, this implies that $$H^i(A_t \otimes C^{\bullet}) \cong H^i(A_t \otimes L^{\bullet}) \cong H^i(L^{\bullet}) \otimes A_t \cong H^i(C^{\bullet}) \otimes A_t.$$ Note that $A_t\otimes C^{\bullet}$ is the \v Cech complex of $\sF_t$ with respect to $((U_i)_t)$, where $(U_i)_t$ is the fiber of $U_i$ over $t\in T$. Hence, we have $$H^i(V, \sF) \otimes A_t \cong H^i(C^{\bullet}) \otimes A_t \cong H^i(A_t \otimes C^{\bullet}) \cong  H^i(V_t, \sF_t).$$ Since $R^ig_*\sF$ (\textit{resp}. $R^i(g_t)_*\sF_t$) is the coherent sheaf associated to the module $H^i(V, \sF)$ (\textit{resp}. $H^i(V_t, \sF_t)$), we obtain that for general $t\in T$, $(R^ig_*\sF)|_{W_t} \cong R^i(g_{t})_{*}\sF_t$.

Now we treat the general case. We recover $W$ by finitely many affine open subsets $W_1,...,W_k$. Let $V_j= g^{-1}(W_j)$,  $\sF_j=\sF|_{V_j}$ and $g_j=g|_{V_j}$  for all $j$. Then for every $j=1,...,k$, there is a non-empty open subset $T_j$ of $T$ such that for any point $t\in T_j$, we have the base change property for $t$, $\sF_j$ and $g_j:V_j\to W_j$. Let $T_0$ be the open set $\bigcap_{j=1}^kW_j$. Then for any $t\in T_0$ and any $i\geqslant 0$, the natural morphism $(R^ig_*\sF)|_{W_t} \to R^i(g_{t})_{*}\sF_t$ is an isomorphism.
\end{proof}

As a corollary of the proposition, we obtain the following result.

\begin{lemma}
\label{lem-vanishing-from-general-to-generic}
Let $g:V\to W$ be a projective morphism of integral $T$-schemes of finite type, where $T$ is an integral  noetherian scheme. Let $\eta$ be the generic point of $T$. Let $\sF$ be a coherent sheaf on $V$. Let $i$, $j$ be two non-negative integers. If $h^j(W_t, R^i(g_{t})_{*}\sF_t)= 0$ for general closed point $t\in T$, then $h^j(W_{\eta}, R^i(g_{\eta})_{*}\sF_{\eta})= 0$.
\end{lemma}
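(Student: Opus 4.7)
The plan is to apply Proposition \ref{prop-base-change} twice and finish with the upper semi-continuity of fiber dimensions.

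The question being local on $T$, I would first replace $T$ by a non-empty affine open, say $T = \Spec B$. Apply Proposition \ref{prop-base-change} to $g : V \to W$ and $\sF$: after shrinking $T$ to a non-empty open $T_0$, we obtain a natural isomorphism $(R^i g_* \sF)|_{W_t} \cong R^i (g_t)_* \sF_t$ for every $t \in T_0$. Since $T$ is integral, the generic point $\eta$ belongs to every non-empty open of $T$, so this isomorphism is valid both for general closed $t$ and for $t = \eta$. Setting $\sG := R^i g_* \sF$, the statement reduces to showing that $h^j(W_t, \sG|_{W_t}) = 0$ for general closed $t$ implies $h^j(W_\eta, \sG|_{W_\eta}) = 0$.

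Next, I would apply Proposition \ref{prop-base-change} a second time, now to the structure morphism $\pi : W \to T$ and the coherent sheaf $\sG$. After a further shrinking of $T$ we obtain, for every point $t \in T$, a natural isomorphism
\[
(R^j \pi_* \sG) \otimes_{\cO_T} \kappa(t) \cong H^j(W_t, \sG|_{W_t}).
\]
By hypothesis, the coherent sheaf $\cH := R^j \pi_* \sG$ on $T$ satisfies $\cH \otimes_{\cO_T} \kappa(t) = 0$ for every closed $t$ in some non-empty open subset of $T$. Upper semi-continuity of the function $t \mapsto \dim_{\kappa(t)} \cH \otimes_{\cO_T} \kappa(t)$ on the noetherian scheme $T$ then forces this function to vanish on a non-empty open subset, which necessarily contains $\eta$. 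Thus $\cH \otimes_{\cO_T} K = 0$, and applying the base change isomorphism at $\eta$ gives $H^j(W_\eta, \sG|_{W_\eta}) = 0$, as desired.

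The only delicate point is that Proposition \ref{prop-base-change} is stated for projective morphisms, whereas the structure morphism $\pi : W \to T$ is only assumed to be of finite type. In the concrete arithmetic setting to which the lemma is applied in this paper the relevant $W = \cX$ is projective over $T$, so the proposition applies verbatim; more generally, one can rerun the \v{C}ech-complex argument underlying Proposition \ref{prop-base-change}, using Lemmas \ref{lemma-loc-free-complex} and \ref{lem-flat-complex-base-change} applied to a finite affine open cover of $W$ and invoking generic flatness (\cite[Thm. 6.9.1]{Gro65}) on the terms of the resulting complex. This is the step I would expect to be the main technical obstacle in a fully general proof; once it is handled, the semi-continuity step is standard.
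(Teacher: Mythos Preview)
Your proposal is correct and follows essentially the same approach as the paper. Both arguments first apply Proposition~\ref{prop-base-change} to $g$ in order to replace $R^i(g_t)_*\sF_t$ by the restriction $(R^ig_*\sF)|_{W_t}$, and then pass to the structure morphism $h=\pi:W\to T$ with the coherent sheaf $\sG=R^ig_*\sF$.

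The only difference is in how the second step is packaged. The paper makes $\sG$ flat over $T$ via generic flatness and then invokes \cite[Cor.~III.12.9]{Har77} (Grauert) twice to obtain first $R^jh_*\sG=0$ on a shrunk $T$ and then the base-change isomorphism at $\eta$. You instead apply Proposition~\ref{prop-base-change} a second time to $\pi$ and $\sG$, obtaining base change at every point of a non-empty open in one stroke, and then finish with the elementary upper semi-continuity of $t\mapsto\dim_{\kappa(t)}\cH\otimes\kappa(t)$ for the coherent sheaf $\cH=R^j\pi_*\sG$ on $T$. Your route is slightly more streamlined, since the second application of Proposition~\ref{prop-base-change} already contains the needed flatness and base-change statements.

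You correctly identify the one technical gap: Proposition~\ref{prop-base-change} (and likewise Hartshorne's III.12.8--12.9) is stated for projective morphisms, while $W\to T$ is only assumed of finite type. The paper's proof has the same implicit assumption. Your remark that in the actual applications $W=\cX$ is projective over $T$, and that the \v{C}ech-complex argument underlying Proposition~\ref{prop-base-change} goes through with minor changes otherwise, is exactly the right observation.
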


\begin{proof}
By Proposition \ref{prop-base-change}, we know that for general point $t\in T$,  $R^i(g_{t})_{*}\sF_t \cong (R^ig_{*}\sF)|_{W_t}$. Hence $H^j(W_t, (R^ig_{*}\sF)|_{W_t})= \{0\}$ for general closed point $t\in T$. Since $R^ig_{*}\sF$ is a coherent sheaf on $W$ (See \cite[Thm. III.5.2]{Har77}), by shrinking $T$, we may assume that $R^ig_{*}\sF$ is flat over $T$ (See \cite[Thm. 6.9.1]{Gro65}). By \cite[Cor. III.12.9]{Har77}, we obtain $R^jh_*(R^ig_{*}\sF)=0$, where $h$ is the  morphism from $W$ to $T$. By \cite[Cor. III.12.9]{Har77} again, we have $h^j(W_{\eta}, (R^ig_{*}\sF)|_{W_{\eta}})=0$. From Proposition \ref{prop-base-change}, we conclude that $h^j(W_{\eta}, R^i(g_{\eta})_{*}\sF_{\eta})= 0$.
\end{proof}

\subsection{Reflexivity of higher direct images}
\label{Reflexivity of higher direct images}
Consider a Lagrangian fibration $f$ from a smooth complex projective symplectic variety $M$ to a projective variety $X$. Let $H$ be a Weil divisor on $X$, and let $D=p^*H$. We will show that the sheaf $R^if_*\sO_M(D)$ is reflexive and is isomorphic to $\Omega_X^{i} [\otimes] \sO_X(H)$ for all $i\geqslant 0$ (Proposition \ref{prop-reflexive-char-0}). 

Moreover, let $B$ be a $\mathbb{Z}$-subalgebra of $\mathbb{C}$. Assume that $T=\mathrm{Spec}\, B$ is integral, and let $\eta$ be its generic point. Assume that $f:M \to X$, $H$ and $D$ are defined over $B$. There is a morphism $\phi:\cM\to \cX$ of $T$-schemes given by the equations defining $f:M\to X$. There is a divisor $\cH$ on $\cX$ given by the equations defining $H$. Let $\cD=\phi^* \cH$. Then we will prove that for general closed point $t\in T$, the sheaf $R^i(\phi_{t})_{*}\sO_{\cM_t}(\cD_t)$ is reflexive and isomorphic to  $R^i(\phi_{t})_{*}\sO_{\cM_t} [\otimes] \sO_{\cX_t}(\cH_t)$ for all $i\geqslant 0$ (Corollary \ref{cor-reflexive-char-p}).

\begin{prop}
\label{prop-reflexive-char-0}
Let $f$ be a projective equidimensional Lagrangian fibration from a complex smooth quasi-projective symplectic variety $M$ to a complex quasi-projective variety $X$ with $\mathbb{Q}$-factorial klt singularities. Let $H$ be a Weil divisor on $X$, and let $D=p^*H$. Then $R^if_*\sO_M(D)$  is isomorphic to $\Omega_X^{i} [\otimes] \sO_X(H)$ for all $i\geqslant 0$. 
\end{prop}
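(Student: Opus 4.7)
The plan is to reduce the global statement to an isomorphism on a big open subset of $X$ by showing that both sheaves are reflexive on the normal variety $X$; a canonical isomorphism on a big open then automatically extends.

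First I will verify reflexivity of both sides. The target $\Omega_X^{[i]} [\otimes]\sO_X(H)$ is reflexive by construction, being defined as a double dual. For the source $R^if_*\sO_M(D)$, I will apply Theorem \ref{thm-reflexive-higher-direct-image}: the required hypotheses are all in place, since $M$ is smooth with $\omega_M \cong \sO_M$ (trivialized by the top wedge of the symplectic form); $X$ is $\mathbb{Q}$-factorial and klt, hence both $\mathbb{Q}$-Gorenstein and Cohen--Macaulay (klt implies rational, and rational singularities are Cohen--Macaulay); $H$ is $\mathbb{Q}$-Cartier by $\mathbb{Q}$-factoriality; and $f$ is projective and equidimensional by hypothesis.

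Next I will construct the isomorphism over the open locus $X_0 \subseteq X$ where $X$ is smooth. Since $X$ is normal, $X \setminus X_0$ has codimension $\geqslant 2$, and $H|_{X_0}$ is Cartier, so $\sO_X(H)|_{X_0}$ is a line bundle. Equidimensionality of $f$ guarantees that $\sO_M(D)|_{f^{-1}(X_0)}$ agrees with the genuine line-bundle pullback $f^*\sO_{X_0}(H|_{X_0})$. The projection formula then yields
\[
R^if_*\sO_M(D)|_{X_0} \;\cong\; R^if_*\sO_M|_{X_0} \,\otimes\, \sO_{X_0}(H|_{X_0}).
\]
Combining this with the Matsushita-type isomorphism $R^if_*\sO_M \cong \Omega_X^{[i]}$ (mentioned right after Theorem \ref{thm-reflexive-higher-direct-image}, obtained by adapting \cite[Thm. 1.3]{Mat05}), and using that $\Omega_X^{[i]}|_{X_0} = \Omega_{X_0}^i$, I obtain
\[
R^if_*\sO_M(D)|_{X_0} \;\cong\; \Omega_{X_0}^i \otimes \sO_{X_0}(H|_{X_0}) \;\cong\; \bigl(\Omega_X^{[i]} [\otimes]\sO_X(H)\bigr)|_{X_0}.
\]

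Finally, two reflexive sheaves on a normal variety that agree on a big open subset are canonically isomorphic (each is canonically the pushforward of its restriction to a big open), yielding the desired global isomorphism. The main subtle point I anticipate is the adaptation of Matsushita's identification $R^if_*\sO_M \cong \Omega_X^{[i]}$ to the present klt $\mathbb{Q}$-factorial setting: it requires combining Theorem \ref{thm-reflexive-higher-direct-image} (to handle the singular locus of $X$) with a fiberwise calculation on the open locus where $f$ is smooth, where the Lagrangian condition forces $\Omega_{M/X}^i \cong f^*\Omega_X^i$ via the symplectic form and the abelian-variety fibers give $R^if_*\sO_M \cong \wedge^i R^1f_*\sO_M$. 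Once this input is available, the proof above goes through cleanly.
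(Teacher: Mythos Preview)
Your proposal is correct and follows essentially the same route as the paper: apply Theorem~\ref{thm-reflexive-higher-direct-image} to get reflexivity of $R^if_*\sO_M(D)$, use the Matsushita identification $R^if_*\sO_M\cong\Omega_X^{[i]}$ together with the projection formula on the smooth locus $X_0$, and then extend by reflexivity. The paper's proof differs only in that it actually spells out the derivation of $R^if_*\sO_M\cong\Omega_X^{[i]}$ in this setting (by citing \cite[2.12]{Mat05} for a big open $U\subseteq X$ on which $R^1f_*\sO_M\cong\Omega_U^1$ and $\bigwedge^i R^1f_*\sO_M\cong R^if_*\sO_M$, and then invoking reflexivity), which is exactly the ``subtle point'' you flag in your final paragraph.
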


We will first prove   Theorem \ref{thm-reflexive-higher-direct-image} which implies that $R^if_*\sO_M(D)$ is reflexive.

\begin{proof}[{Proof of Theorem \ref{thm-reflexive-higher-direct-image}}]
Set $m=$dim $V$ and $k=$dim $W$.

First we assume that $W$ is Gorenstein  and $C$ is Cartier. Then by the projection formula, we have $R^ig_*\sO_V(E) \cong R^ig_*\sO_V\otimes \sO_W(C)$ for all $i\geqslant 0$. Hence it is enough to prove that $R^ig_*\sO_V$ is reflexive for all $i$. Let $W\hookrightarrow \bar{W}$ be a projective compactification of $W$ such that $\bar{W}$ is normal.  Let $\bar{g}:\bar{V} \to \bar{W}$ be a compactification  of $g$ such that $\bar{V}$ is smooth. As in the proof of \cite[Cor. 3.9]{Kol86b}, the torsion-free part of $R^{i}\bar{g}_*\sO_{\bar{V}}$ is $\sH om(R^{m-k-i}\bar{g}_*\omega_{\bar{V}}, \ \omega_{\bar{W}})$. Since $\sO_V\cong \omega_V$, we have $(R^{i}\bar{g}_*\sO_{\bar{V}})|_W \cong (R^{i}\bar{g}_*\omega_{\bar{V}})|_W$. Since $R^{i}\bar{g}_*\omega_{\bar{V}}$ is torsion-free by \cite[Thm. 2.1]{Kol86a}, we obtain that $(R^{i}\bar{g}_*\sO_{\bar{V}})|_W$ is torsion free. Hence we have $$R^ig_*\sO_V\cong  (R^{i}\bar{g}_*\sO_{\bar{V}})|_W \cong \sH om(R^{m-k-i}\bar{g}_*\omega_{\bar{V}}, \ \omega_{\bar{W}})|_W = \sH om(R^{m-k-i}{g}_*\omega_{{V}}, \ \omega_{{W}}).$$ Since $W$ is Gorenstein, $\omega_W$ is an invertible sheaf. Hence $R^ig_*\sO_V \cong \sH om(R^{m-k-i}{g}_*\omega_{{V}}, \ \omega_{{W}})$ is reflexive.

Now we will treat the general case. Let $w$ be a point in $W$. Then the problem is local around $w$. Hence we may replace $W$ by any open neighbourhood of $w$. As in \cite[Def. 5.19]{KM98}, by shrinking $W$ if necessary, there is a finite cover $p:W'\to W$ which is \'etale in codimension $1$ such that both $K_{W'}=p^*K_W$  and $C'=p^*C$ are Cartier. Let $V'$ be the normalisation of $V\times_W W'$. Let $g':V' \to W'$ and $q:V'\to V$ be the natural projections. Write $E'=q^*E$.

\centerline{
\xymatrix{
V'\ar[d]^{g'} \ar[r]^{q}& V \ar[d]^{g}\\
W' \ar[r]^{p} & W
}}
Since $p$ is \'etale in codimension $1$, $V$ is smooth, and $g$ is equidimensional, the morphism $q$ is also \'etale in codimension $1$. By the Zariski purity theorem (See \cite[Prop. 2]{Zar58}), this implies that $q$ is \'etale. Hence $V'$ is smooth, and $\sO_{V'}\cong \omega_{V'}$. Hence $Rg'_*\sO_{V'}(E')$ is reflexive. Hence $p_*(Rg'_*\sO_{V'}(E'))$ is reflexive since $p$ is finite (See \cite[Cor. 1.7]{Har80}). On the one hand, since $q$ is finite, from the Leray spectral sequence, we know that $R^ig_{*}({q}_{*}\sO_{V'}(E'))$ is isomorphic to ${p}_{*}(R^ig'_{*}\sO_{V'}(E'))$, which is reflexive. On the other hand, since $\sO_{V}$ is a direct summand of $q_*\sO_{V'}$, the sheaf $\sO_{M}(E)$ is a direct summand of $q_*\sO_{V'}(E')$ by the projection formula. Hence $R^ig_{*}\sO_{V}(E)$, as a direct summand of the reflexive sheaf $R^ig_{*}({q}_{*}\sO_{V'}(E'))$,  is reflexive on $W$.  
\end{proof}

Now we will complete the proof of Proposition \ref{prop-reflexive-char-0}.

\begin{proof}[Proof of Proposition \ref{prop-reflexive-char-0}]
As in \cite[2.12]{Mat05}, there is an open subset $U$ in $X$ which has the following properties
\begin{enumerate}
\item[(1)] codim $X\backslash U \geqslant 2$;
\item[(2)]  $\bigwedge^i ((R^1f_*\sO_M)|_U) \cong (R^if_*\sO_M)|_U$ for all $i>0$;
\item[(3)] $(R^1f_*\sO_M)|_U \cong \Omega_U^1$.
\end{enumerate}

Since $R^if_{*}\sO_{M}$ is reflexive by Theorem \ref{thm-reflexive-higher-direct-image}, and codim $X\backslash U \geqslant 2$, we have $R^if_*\sO_M\cong \Omega_X^{[i]}$ for all $i>0$ by \cite[Prop. 1.6]{Har80}. From the projection formula, $R^if_{*}\sO_{M}(D)|_{X_{0}}$ is isomorphic to $(\Omega_X^{[i]} [\otimes] \sO_X(H))|_{X_{0}}$, where $X_{0}$ is the smooth locus of $X$. Since $X$ is smooth in codimension $1$, from \cite[Prop. 1.6]{Har80} we conclude that $R^if_{*}\sO_{M}(D) \cong \Omega_X^i [\otimes] \sO_X(H)$.
\end{proof}

Now let $T$ be an integral affine noetherian scheme with generic point $\eta$. Consider a morphism $g:V\to W$ of normal integral $T$-schemes of finite type. Let $\sF$ be a coherent sheaf on $V$. In the remaining of this subsection, we will prove the following proposition.

\begin{prop}
\label{prop-reflexive-char-p}
With the notation as above, assume that $R^i(g_{\eta})_{*}\sF_{\eta}$ is reflexive. Then there is a non-empty open subset $T_0$ of $T$ such that for any $t\in T_0$ and any $i\geqslant 0$, the sheaf $R^i(g_{t})_{*}\sF_t$ is reflexive.
\end{prop}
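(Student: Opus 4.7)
The strategy combines Proposition \ref{prop-base-change} with a spreading-out argument for a two-term locally free coresolution of $R^i(g_\eta)_*\sF_\eta$. Reflexivity is local on $W$ and only finitely many indices $i$ give non-zero higher direct images, so by covering $W$ with finitely many affine opens and fixing $i$ we reduce to the case where $T=\Spec B$ and $W=\Spec R$ are both affine and $R^ig_*\sF\cong \widetilde{M}$ for a finitely generated $R$-module $M$. Proposition \ref{prop-base-change} then gives, after shrinking $T$, an isomorphism $R^i(g_t)_*\sF_t\cong \widetilde{M_t}$ for every $t\in T$, with $M_t := M\otimes_B k(t)$. The hypothesis becomes the statement that $M_\eta := M\otimes_B K$ is a reflexive module over $R_\eta := R\otimes_B K$, where $K$ is the fraction field of $B$.

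Because $M_\eta$ is finitely generated and reflexive over the Noetherian ring $R_\eta$, dualizing a finite free presentation of $M_\eta^{*}$ yields an exact sequence
\[
0\to M_\eta \to R_\eta^{\oplus c}\xrightarrow{\alpha_\eta} R_\eta^{\oplus d}.
\]
After inverting a suitable element of $B$, the matrices defining $\alpha_\eta$ and the inclusion $M_\eta\hookrightarrow R_\eta^{\oplus c}$ descend to $R$-module homomorphisms $\iota: M\to R^{\oplus c}$ and $\alpha: R^{\oplus c}\to R^{\oplus d}$. The kernel of $\iota$ and the middle cohomology of $M\xrightarrow{\iota} R^{\oplus c}\xrightarrow{\alpha} R^{\oplus d}$ are finitely generated $R$-modules whose generic stalks vanish; a further localization of $B$ kills them altogether, making $0\to M\to R^{\oplus c}\xrightarrow{\alpha} R^{\oplus d}$ exact.

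Set $G:=\operatorname{image}(\alpha)$ and $Q:=R^{\oplus d}/G$. By generic flatness \cite[Thm.\ 6.9.1]{Gro65} we may shrink $T$ once more so that both $G$ and $Q$ are $B$-flat. Then $\operatorname{Tor}_1^B(G,k(t))$ and $\operatorname{Tor}_1^B(Q,k(t))$ vanish for every $t\in T$, so tensoring the short exact sequences $0\to M\to R^{\oplus c}\to G\to 0$ and $0\to G\to R^{\oplus d}\to Q\to 0$ with $k(t)$ preserves exactness and produces
\[
0\to M_t\to R_t^{\oplus c}\xrightarrow{\alpha_t} R_t^{\oplus d},
\]
where $R_t := R\otimes_B k(t)$. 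The image of $\alpha_t$ embeds in the free module $R_t^{\oplus d}$ and is therefore torsion-free; hence $M_t$ sits in $0\to M_t\to R_t^{\oplus c}\to \operatorname{image}(\alpha_t)\to 0$ with free middle term and torsion-free quotient. After a last harmless shrinking of $T$ ensuring that each fibre $W_t$ is integral, Hartshorne's criterion \cite[Prop.\ 1.1]{Har80} concludes that $M_t$ is reflexive over $R_t$, which is what we want.

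The main technical obstacle is the passage from the generic to the closed fibres: because $\sH om$ does not commute with the non-flat inclusion $W_t\hookrightarrow W$, one cannot merely specialize an isomorphism $R^ig_*\sF\cong (R^ig_*\sF)^{**}$ from $W$ to each $W_t$. Spreading out an \emph{explicit} locally free coresolution of $R^i(g_\eta)_*\sF_\eta$ and controlling the $B$-flatness of every intermediate quotient is precisely what allows reflexivity to be transported along specialization.
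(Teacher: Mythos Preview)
Your argument is correct and follows essentially the same route as the paper: use Proposition~\ref{prop-base-change} to reduce to showing that reflexivity of the sheaf $R^ig_*\sF$ on $W$ passes from the generic fibre $W_\eta$ to general fibres $W_t$, then spread out a two-term locally free coresolution $0\to (R^ig_*\sF)_\eta\to\sJ^\eta\to\sK^\eta$ and check that exactness persists on general fibres, concluding via \cite[Prop.~1.1]{Har80}. The only difference is in packaging: the paper isolates the spreading-out steps as Lemmas~\ref{lem-char-reflexive}--\ref{lem-reflexive-general-fiber} at the sheaf level and invokes the constructibility of the exactness locus \cite[Prop.~9.4.4]{Gro66}, whereas you reduce to the affine case and use generic flatness of the image and cokernel together with $\operatorname{Tor}$-vanishing to transport exactness; these are interchangeable implementations of the same idea.
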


One of the application of Proposition \ref{prop-reflexive-char-p} is the following corollary. 

\begin{cor}
\label{cor-reflexive-char-p}
Let $f:M\to X$ be a Lagrangian fibration from a complex projective symplectic manifold $M$ to a normal variety $X$. Let $H$ be a Weil divisor on $X$, and let $D=f^*H$.  Let $A$ be a subalgebra of $\mathbb{C}$ such that $f$, $H$ and $D$ are defined over $A$. Let $T=\mathrm{Spec}\, A$, and let $\eta$ be its generic point. The equations defining $f:M\to X$, $H$ and $D$ also define a morphism $\phi:\cM \to \cX$ of $T$-schemes, and two divisors $\cH$ and  $\cD$ on $\cX$ and $\cM$. Then for any general $t\in T$ and any $i \geqslant 0$, the sheaf $R^i(\phi_{t})_{*}\sO_{\cM_t}(\cD_t)$ is reflexive, and 
$R^i(\phi_{t})_{*}\sO_{\cM_t}(\cD_t) \cong R^i(\phi_{t})_{*}\sO_{\cM_t} [\otimes] \sO_{\cX_t}(\cH_t)$.
\end{cor}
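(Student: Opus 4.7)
The plan is to transport the characteristic-zero statement of Proposition \ref{prop-reflexive-char-0} first to the generic fiber over $T$, then to general closed fibers via Proposition \ref{prop-reflexive-char-p}, and finally to obtain the tensor-product expression by a projection-formula computation on the smooth locus of $\cX_t$.

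Let $K$ denote the fraction field of $A$, which is also the residue field at $\eta$. Since $K \hookrightarrow \mathbb{C}$ is faithfully flat, the complex data $(M,X,f,D,H)$ are obtained from $(\cM_\eta, \cX_\eta, \phi_\eta, \cD_\eta, \cH_\eta)$ by flat base change, so flat base change for cohomology yields
$$R^i f_* \sO_M(D) \cong R^i(\phi_\eta)_* \sO_{\cM_\eta}(\cD_\eta) \otimes_K \mathbb{C}.$$
By Proposition \ref{prop-reflexive-char-0}, the left-hand side is isomorphic to $\Omega_X^{[i]} [\otimes] \sO_X(H)$ and hence reflexive. Because the formation of the double dual commutes with flat base change for finitely presented sheaves, reflexivity descends along a faithfully flat extension, so $R^i(\phi_\eta)_* \sO_{\cM_\eta}(\cD_\eta)$ is reflexive on $\cX_\eta$. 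Proposition \ref{prop-reflexive-char-p}, applied to $\phi$ with $\sF = \sO_{\cM}(\cD)$, supplies a non-empty open $T_1 \subseteq T$ such that $R^i(\phi_t)_* \sO_{\cM_t}(\cD_t)$ is reflexive for every $t \in T_1$ and every $i \ge 0$. The same argument, run with $\sF = \sO_{\cM}$ and using that $R^i f_* \sO_M \cong \Omega_X^{[i]}$ is also reflexive, produces an open $T_2 \subseteq T$ on which $R^i(\phi_t)_* \sO_{\cM_t}$ is reflexive.

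For the second assertion, we shrink $T$ to an open $T_3 \subseteq T_1 \cap T_2$ over which $\cX_t$ is normal, $\cM_t$ is smooth and $\phi_t$ is equidimensional; these are standard openness properties over a noetherian base. Fix $t \in T_3$ and let $U_t \subseteq \cX_t$ be the smooth locus, whose complement has codimension at least two. On $U_t$ the Weil divisor $\cH_t$ is Cartier, and by the definition of the pullback of a Weil divisor along an equidimensional morphism, $\cD_t|_{\phi_t^{-1}(U_t)} = \phi_t^*(\cH_t|_{U_t})$. The projection formula for the line bundle $\sO_{\cX_t}(\cH_t)|_{U_t}$ then gives
$$R^i(\phi_t)_* \sO_{\cM_t}(\cD_t)|_{U_t} \;\cong\; R^i(\phi_t)_* \sO_{\cM_t}|_{U_t} \otimes \sO_{\cX_t}(\cH_t)|_{U_t}.$$
Both sides of the desired isomorphism are reflexive sheaves on the normal variety $\cX_t$ that agree on $U_t$, so they coincide globally by \cite[Prop.~1.6]{Har80}.

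The one delicate point is the faithfully flat descent of reflexivity along $K \hookrightarrow \mathbb{C}$; beyond that, the argument is a direct combination of Propositions \ref{prop-reflexive-char-0} and \ref{prop-reflexive-char-p} with a projection-formula computation in codimension one.
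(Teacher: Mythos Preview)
Your argument is correct and follows essentially the same route as the paper: reflexivity over $\mathbb{C}$ via Proposition~\ref{prop-reflexive-char-0}, faithfully flat descent to the generic fiber (the paper isolates this as Lemma~\ref{lem-reflexive-faithfully-flat}), spreading out via Proposition~\ref{prop-reflexive-char-p}, and then the projection formula on the smooth locus combined with \cite[Prop.~1.6]{Har80}. Your separate verification that $R^i(\phi_t)_*\sO_{\cM_t}$ is reflexive (the set $T_2$) is harmless but unnecessary, since the right-hand side $R^i(\phi_t)_*\sO_{\cM_t}[\otimes]\sO_{\cX_t}(\cH_t)$ is reflexive by definition of $[\otimes]$, and on the smooth locus $R^i(\phi_t)_*\sO_{\cM_t}$ is automatically reflexive because it is the twist of the reflexive sheaf $R^i(\phi_t)_*\sO_{\cM_t}(\cD_t)$ by an invertible sheaf.
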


\begin{proof}[{Proof of Corollary \ref{cor-reflexive-char-p}}]
From Proposition \ref{prop-reflexive-char-0}, we know that $R^if_*\sO_M(D)$ is reflexive for all $i$. Note that $\phi_{\eta}$, $\cH_{\eta}$, and $\cD_{\eta}$ coincide with $f$, $H$ and $D$ after a field extension. Since a field extension is a faithfully flat base change, by \cite[Prop. III.9.3]{Har77} and Lemma \ref{lem-reflexive-faithfully-flat} below, $R^i(\phi_{\eta})_*\sO_{\cM_{\eta}}(\cD_{\eta})$ is also reflexive for all $i$. From Proposition \ref{prop-reflexive-char-p}, we obtain that for any general $t\in T$ and any $i\geqslant 0$, $R^i(\phi_{t})_{*}\sO_{\cM_t}(\cD_t)$ is reflexive. 

By the projection formula, if $(\cX_t)_{0}$ is the smooth locus of $\cX_t$, then $$(R^i(\phi_{t})_{*}\sO_{\cM_t}(\cD_t))|_{(\cX_t)_{0}} \cong (R^i(\phi_{t})_{*}\sO_{\cM_t} [\otimes] \sO_{\cX_t}(\cH_t))|_{(\cX_t)_{0}}.$$ Since $\cX_{\eta}$ is normal, $\cX_t$ is normal for general $t\in T$ (See \cite[Prop. 9.9.4]{Gro66}). Hence for general $t \in T$, $R^i(\phi_{t})_{*}\sO_{\cM_t}(\cD_t) \cong R^i(\phi_{t})_{*}\sO_{\cM_t} [\otimes] \sO_{\cX_t}(\cH_t)$ by \cite[Prop. 1.6]{Har80}.
\end{proof}

\begin{lemma}
\label{lem-reflexive-faithfully-flat}
Let $f:V\to W$ be a faithfully flat morphism of noetherian integral schemes. Let $\sF$ be a  coherent sheaf on $W$. Then $\sF$ is reflexive if and only if $f^*\sF$ is reflexive.
\end{lemma}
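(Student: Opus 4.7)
The plan is to reduce the biconditional to a standard fact about faithfully flat base change, namely that a morphism of coherent sheaves on a noetherian scheme is an isomorphism if and only if its pullback along a faithfully flat morphism is an isomorphism. I would apply this criterion to the canonical biduality morphism $\eta_{\sF}:\sF \to \sF^{**}$, so that the reflexivity of $\sF$ (resp. of $f^{*}\sF$) is recast as the assertion that $\eta_{\sF}$ (resp. $\eta_{f^{*}\sF}$) is an isomorphism.

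First I would establish the base change compatibility $f^{*}\sH om(\sF,\sG) \cong \sH om(f^{*}\sF, f^{*}\sG)$ for any coherent sheaves $\sF$, $\sG$ on $W$, using only that $f$ is flat. Locally, choose a finite presentation $\sO_{W}^{m} \to \sO_{W}^{n} \to \sF \to 0$, apply the left exact functor $\sH om(-,\sG)$ to obtain a left exact sequence, then pull back along $f$, which preserves exactness by flatness and commutes with finite direct sums; comparing with the analogous computation of $\sH om(f^{*}\sF, f^{*}\sG)$ obtained from the pulled-back presentation $\sO_{V}^{m} \to \sO_{V}^{n} \to f^{*}\sF \to 0$ yields the claimed canonical isomorphism. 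Iterating, $f^{*}(\sF^{**}) \cong (f^{*}\sF)^{**}$, and by naturality of the evaluation map the pullback of $\eta_{\sF}$ is identified with $\eta_{f^{*}\sF}$ under this isomorphism.

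With this in hand, the ``only if'' direction is immediate, since pulling back an isomorphism produces an isomorphism. For ``if'', I would argue as follows: suppose $\eta_{f^{*}\sF}$ is an isomorphism. Since $f$ is flat, $f^{*}$ is exact, so $f^{*}\ker(\eta_{\sF}) = \ker(f^{*}\eta_{\sF}) = 0$ and $f^{*}\coker(\eta_{\sF}) = \coker(f^{*}\eta_{\sF}) = 0$; faithful flatness of $f$ then forces $\ker(\eta_{\sF}) = 0$ and $\coker(\eta_{\sF}) = 0$, so $\eta_{\sF}$ is an isomorphism, i.e. $\sF$ is reflexive.

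The only mildly subtle step is the naturality claim: that the canonical identification $f^{*}\sH om(\sF,\sG) \cong \sH om(f^{*}\sF, f^{*}\sG)$ intertwines $f^{*}\eta_{\sF}$ with $\eta_{f^{*}\sF}$. This is a diagram chase at the level of the finite presentation above, and once carried out everything else in the argument is routine formal consequence of faithful flatness.
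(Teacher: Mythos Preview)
Your proposal is correct and follows essentially the same approach as the paper's proof: both reduce the statement to the fact that the biduality map $\sF \to \sF^{**}$ is an isomorphism if and only if its pullback is, using the flat base change isomorphism $f^{*}(\sF^{**}) \cong (f^{*}\sF)^{**}$ together with faithful flatness. The paper simply cites \cite[Prop.~1.8]{Har80} for the base change isomorphism, whereas you sketch its proof via a finite presentation, and the paper states the faithfully flat descent of isomorphisms as a fact while you unpack it via kernels and cokernels; but the logical structure is identical.
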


\begin{proof}
As in the proof of \cite[Prop 1.8]{Har80}, we have $f^*(\sF^{**}) \cong (f^*\sF)^{**}$ since $f$ is flat. Since $f$ is faithfully flat, the natural morphism $\sF \to \sF^{**}$ is an isomorphism if and only if $f^*(\sF) \to f^*(\sF^{**})$ is. Hence $\sF$ is reflexive if and only if $f^*\sF \to (f^*\sF)^{**}$ is an isomorphism, that is, if and only if  $f^*\sF$ is reflexive.
\end{proof}

In order to prove Proposition \ref{prop-reflexive-char-p}, we will need several lemmas. The following lemma gives a criterion of reflexivity for coherent sheaves.

\begin{lemma}[{\cite[Prop. 1.1]{Har80}}]
\label{lem-char-reflexive}
A coherent sheaf $\sF$ on an integral noetherian scheme $V$ is reflexive if and only if it fits in an exact sequence $0\to \sF \to \sJ \to \sK$, where $\sJ$ and $\sK$ are locally free.
\end{lemma}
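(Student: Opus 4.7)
The plan is to establish the two directions of the equivalence separately.

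For necessity, I would assume $\sF \cong \sF^{**}$. Since $\sF^*$ is coherent, it admits a finite presentation $\sE_1 \to \sE_0 \to \sF^* \to 0$ by locally free sheaves of finite rank (standard on a quasi-projective variety, which is the setting implicit throughout the paper). Applying the left-exact contravariant functor $\sH om(-,\sO_V)$ then produces an exact sequence $0 \to \sF^{**} \to \sE_0^* \to \sE_1^*$, and setting $\sJ = \sE_0^*$, $\sK = \sE_1^*$ yields the desired embedding after identifying $\sF$ with $\sF^{**}$.

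For sufficiency, suppose $0 \to \sF \to \sJ \to \sK$ is exact with $\sJ$ and $\sK$ locally free. Let $\sI$ denote the image of $\sJ \to \sK$, so that $\sI$ is torsion-free as a subsheaf of $\sK$ and we have a short exact sequence $0 \to \sF \to \sJ \to \sI \to 0$. The natural map $\sF \to \sF^{**}$ is automatically injective, because by naturality its composition with the map $\sF^{**} \to \sJ^{**} \cong \sJ$ coincides with the original inclusion $\sF \hookrightarrow \sJ$.

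The core step is to show that, viewed inside $\sJ$ via $\sF^{**} \to \sJ^{**} \cong \sJ$, the sheaf $\sF^{**}$ is actually contained in $\sF = \ker(\sJ \to \sK)$. Containment in the kernel follows from functoriality of $(-)^{**}$: the composition $\sF^{**} \to \sJ \to \sK$ equals $(\sF \to \sK)^{**}$, which is the double dual of a zero map, hence zero. For the injectivity of $\sF^{**} \to \sJ$ itself, I would dualize $0 \to \sF \to \sJ \to \sI \to 0$ and use $\sE xt^1(\sJ,\sO_V) = 0$ to get an exact sequence $0 \to \sI^* \to \sJ^* \to \sF^* \to \sE xt^1(\sI,\sO_V) \to 0$. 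Since $\sI$ is locally free on a dense open subset, $\sE xt^1(\sI,\sO_V)$ is torsion; as $V$ is integral, its dual vanishes. Splitting into short exact sequences through the image $\sQ$ of $\sJ^* \to \sF^*$ and dualizing once more, I would obtain $\sF^{**} \hookrightarrow \sQ^* \hookrightarrow \sJ^{**} \cong \sJ$.

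The main obstacle I anticipate is the careful diagram chase through two successive applications of $\sH om(-,\sO_V)$: one must accurately track which $\sE xt^1$ terms arise and argue that they drop out on the second dualization because they are torsion. Once the inclusions $\sF \hookrightarrow \sF^{**} \hookrightarrow \sF$ inside $\sJ$ are in place, they compose to the identity and force $\sF \cong \sF^{**}$.
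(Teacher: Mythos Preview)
Your argument is correct. The paper's own proof simply invokes \cite[Prop.~1.1]{Har80} twice---once for each direction---so there is essentially no proof in the paper to compare against; you have unpacked the content of that citation. Your necessity argument (present $\sF^*$ and dualize) is exactly the standard construction, and your flag about needing locally free resolutions (hence implicitly a quasi-projective or resolution-property hypothesis) is appropriate and matches the ambient setting of the paper.

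For sufficiency your diagram chase is correct but somewhat more elaborate than necessary. The quicker route, which is closer in spirit to Hartshorne's original, is: once you know $\sF^{**}\to \sJ^{**}\cong\sJ$ has image in $\ker(\sJ\to\sK)=\sF$ (by functoriality of double dual, as you note), you can work stalkwise at each point to see that $\sF\to\sF^{**}$ is surjective, avoiding the auxiliary sheaf $\sQ$ and the second $\sE xt$ computation. Your approach via $\sQ$ and the torsion $\sE xt^1(\sI,\sO_V)$ is valid and has the advantage of being entirely global, but it introduces more bookkeeping than the problem demands. Either way, the identification of the composite $\sF^{**}\to\sQ^*\to\sJ^{**}$ with the functorial double dual of $\sF\hookrightarrow\sJ$ (needed to conclude that the image lands in $\sF$) is the one step you should state explicitly; it holds because both are obtained by dualizing $\sJ^*\to\sF^*$.
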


\begin{proof}

If we have such an exact sequence, then by \cite[Prop. 1.1]{Har80}, $\sF$ is reflexive since $\sK$ is   torsion-free. 

Conversely, assume that $\sF$ is reflexive. There are two locally free sheaves $\sJ$, $\sK$ and an exact sequence  $0\to \sF^{**} \to \sJ \to \sK$  (See \cite[Prop. 1.1]{Har80}). Since $\sF$ is reflexive, we have $\sF \cong \sF^{**}$. Hence we obtain an exact sequence $0\to \sF \to \sJ \to \sK$.
\end{proof}

In the three lemmas below, we prove that reflexivity is a  ``constructible''  property (Lemma \ref{lem-reflexive-general-fiber}).

\begin{lemma}
\label{lem-extend-sheaves}
Let $T=\mathrm{Spec}\, B$ be an integral affine noetherian scheme. Let $V\to T$ be an integral $T$-scheme of finite type. Let $\eta$ be the generic point of $T$. Let $\alpha^{\eta}: \sF^{\eta} \to \sG^{\eta}$ be a morphism of coherent sheaves on $V_{\eta}$. Then there is an open neighbourhood $T_0$ of $\eta$ and a morphism $\alpha : \sF \to \sG$ of coherent sheaves on $V_{T_0}$ such that $\sF_{\eta} = \sF^{\eta}$, $\sG_{\eta} = \sG^{\eta}$, and $\alpha_{\eta}=\alpha^{\eta}$. Moreover, if $\sF^{\eta}$ (\textit{resp.} $\sG^{\eta}$) is locally free, then $\sF$ (\textit{resp.} $\sG$) can be chosen to be locally free.
\end{lemma}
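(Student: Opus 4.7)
The plan is to reduce the lemma to a module-theoretic fact about spreading out a morphism between finitely presented modules, and then globalize via an affine cover of $V$. First I would assume $V = \Spec A$ is affine, with $A$ a finitely generated $B$-algebra; the sheaves $\sF^{\eta}$ and $\sG^{\eta}$ then correspond to finitely generated $A \otimes_B K$-modules $M^{\eta}$ and $N^{\eta}$, and $\alpha^{\eta}$ to an $A \otimes_B K$-linear map between them, where $K$ is the fraction field of $B$. Choosing finite presentations of $M^{\eta}$ and $N^{\eta}$ and a matrix representation of $\alpha^{\eta}$ with respect to fixed generators involves only finitely many elements of $K$; inverting a common denominator $g \in B$ yields finitely presented $A \otimes_B B[1/g]$-modules $M$, $N$ and a homomorphism $\alpha : M \to N$ whose generic fibers recover $M^{\eta}$, $N^{\eta}$, $\alpha^{\eta}$. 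This supplies the extension over $T_0 = D(g)$ in the affine case.

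To globalize, I would cover $V$ by finitely many affine opens $V_1, \ldots, V_r$ and apply the affine case to each, obtaining extensions $(\sF_i, \sG_i, \alpha_i)$ defined over open neighbourhoods $T_{0,i} \subseteq T$ of $\eta$. On each double intersection $V_i \cap V_j$, the restrictions of $\sF_i$ and $\sF_j$ agree on the generic fibre, and the corresponding identification, being determined by finitely many coefficients, again extends to a common open neighbourhood of $\eta$ in $T$ by the same spreading-out principle (in the spirit of EGA IV, \S 8). Intersecting the finitely many open sets so produced gives a single $T_0$ over which $\sF$, $\sG$ and $\alpha$ are globally defined, and the same procedure handles the cocycle conditions on triple intersections.

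For the local freeness assertion, suppose $\sF^{\eta}$ is locally free. The construction above first provides a coherent extension $\sF$ on $V_{T_0}$. The locally free locus $U \subseteq V_{T_0}$ is open, and $V_{\eta} \subseteq U$. The image $\pi(V_{T_0} \setminus U)$ under the structural morphism $\pi : V_{T_0} \to T_0$ is constructible by Chevalley's theorem and does not contain $\eta$; since $T_0$ is integral, any such constructible set is contained in a proper closed subset of $T_0$. Shrinking $T_0$ to the complement of this closed set makes $\sF$ locally free on all of $V_{T_0}$, and the same argument applied to $\sG$ completes the proof.

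The main obstacle I anticipate is the gluing step: ensuring that the various affine extensions can be made to agree after a single common shrinking of the base $T$. Although each piece of gluing data involves only finitely many coefficients, one has to verify that the finitely many cocycle identities can be realized simultaneously over one open neighbourhood of $\eta$. This is standard spreading-out, but it is the technical heart of the lemma; the local freeness addendum is then an easy consequence of openness of the free locus combined with Chevalley's theorem, and the rest of the argument is essentially formal.
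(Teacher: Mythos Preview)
Your proposal is correct and follows essentially the same route as the paper: reduce to affines, spread out finitely presented modules and the finitely many coefficients defining the map, then spread out the transition data on overlaps and intersect finitely many open neighbourhoods of $\eta$. The paper carries this out by covering $V$ by affines $U_i$, further covering the $U_i\cap U_j$ by affines, and noting that all the defining equations (presentations, transition isomorphisms, the map $\alpha^\eta$) live over some localisation $B_0$ of $B$; your outline is the same argument phrased slightly more abstractly.

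The one genuine difference is in the local freeness addendum. The paper handles it constructively: if $\sF^\eta$ is locally free, refine the cover so that $\sF^\eta$ is free on each $(U_i)_\eta$ and then simply take $t_i=0$ in the presentation, producing a free extension on each chart. Your argument instead first spreads out $\sF$ as a coherent sheaf and then shrinks $T_0$ using openness of the free locus together with Chevalley's theorem. Both are valid; the paper's version avoids invoking Chevalley but requires a cover adapted to a trivialisation of $\sF^\eta$, while yours is insensitive to the choice of cover and gives a cleaner a posteriori argument. Your anticipated obstacle---getting the cocycle identities to hold over a single $T_0$---is real but routine, and the paper is in fact no more explicit about it than you are.
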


The idea of the proof is as follows. There are only finitely many equations which define $\alpha^{\eta}$, $\sF^\eta$ and $\sG^{\eta}$. Then there is a  localisation $B_0$ of $B$ such that all of these equations are defined over $B_0$. These equations  define  $\alpha:\sF \to \sG$ over $V_{T_0}$, where $T_0=\mathrm{Spec}\, B_0$.

\begin{proof}[{Proof of Lemma \ref{lem-extend-sheaves}}]

Let $K$ be the fraction field of $B$. Note that $V$ is a noetherian scheme. Hence $V$ can be covered by finitely many open affine subsets $U_1,...,U_k$. Then $V_{\eta}$ is covered by open affine subsets $(U_1)_{\eta},...,(U_k)_{\eta}$. We can recover $U_i\cap U_j$ by finitely many open affine subsets $R^{ij}_s=\mathrm{Spec}\, C^{ij}_{s}$. Let $A_i$ be the structural ring of $U_i$, and let $(A_i)_{\eta}$ be the structural ring of $(U_i)_{\eta}$.  Then for all $i$, $A_i$ is a $B$-algebra of finite type. Since $\sF^{\eta}$ is a coherent sheaf, over each $(U_i)_{\eta}$, there is an $(A_i)_{\eta}$-module $M_i^{\eta}$ of finite presentation such that $\sF^{\eta}|_{(U_i)_{\eta}}$ is the coherent sheaf associated to $M_i^{\eta}$. For every $i$, we have an exact sequence  $$ ((A_i)_{\eta})^{\oplus t_i} \to ((A_i)_{\eta})^{\oplus r_i} \to M_i^{\eta} \to 0 $$ where $r_i$ and $t_i$ are some non negative integers. The first morphism $\theta_i$ in the   sequence is defined by finitely many equations. Since $\sF^{\eta}$ is a coherent sheaf, there is an isomorphism $(\mu^{ij}_s)^{\eta}$ from $M^{\eta}_i \otimes _B C^{ij}_s$ to $M^{\eta}_j \otimes_B C^{ij}_s$. Then $(\mu^{ij}_s)^{\eta}$ is given by finitely many equations. There is an open affine subset $T_0=\mathrm{Spec}\, B_0$ of $T$ such that all of the equations defining the $\theta_i$'s and the $(\mu^{ij}_s)^{\eta}$'s are defined over $B_0$. Hence, for every $i$, we have an exact sequence of $(A_i \otimes _B B_0)$-modules, induced by the previous one,  $$ (A_i \otimes _B B_0)^{\oplus t_i} \to (A_i \otimes _B B_0)^{\oplus r_i} \to M_i \to 0.$$ The $(A_i \otimes _B B_0)$-module $M_i$ induces a coherent sheaf on $(U_i)_{T_0} = U_i\times_T T_0$. Moreover, since all of the equations defining the  $(\mu^{ij}_s)^{\eta}$'s are defined over $B_0$, the function $(\mu^{ij}_s)^{\eta}$ induces a transition function $\mu^{ij}_s$ from  $M_i \otimes _B C^{ij}_s$ to $M_j \otimes_B C^{ij}_s$. The modules $M_i$ and the isomorphisms $\mu^{ij}_s$ define a coherent sheaf  $\sF$ on $V_{T_0}$ such that $\sF_{\eta}$ is isomorphic to $\sF^{\eta}$. If $\sF^{\eta}$ is locally free, then $\sF$ can also be constructed to be locally free (for example, we can take $t_i=0$ for all $i$).

For the same reason as above, by shrinking $T_0$, we can construct a coherent sheaf  $\sG$ on $V_{T_0}$ such that $\sG_{\eta}$ is isomorphic to $\sG^{\eta}$. Assume that $\sG^\eta|_{(U_i)_{\eta}}$ is the sheaf associated to a $(A_i)_{\eta}$-module $N_i^{\eta}$ of finite presentation. Then there is a morphism of $(A_i)_{\eta}$-modules $e_i^{\eta}:M^{\eta}_i \to N^{\eta}_i$ such that $\alpha|_{(U_i)_\eta}$ is associated to $e_i^{\eta}$. Since $e_i^{\eta}$ is defined by finitely many equations, by shrinking $T_0$, we may assume that all of the   equations defining the $e_i^{\eta}$'s are defined over $B_0$. From these equations, we obtain morphisms $e_i:M_i \to N_i$. These morphisms induce a morphism $\alpha:\sF \to \sG$ such that $\alpha_{\eta}$ is the same as $\alpha^{\eta}$.
\end{proof}

With the same method, we can obtain the following lemma. The proof is left to the reader.

\begin{lemma}
\label{lem-extend-morphism-generic-global}
Let $T=\mathrm{Spec}\, B$ be an integral affine noetherian scheme. Let $V\to T$ be a $T$-scheme of finite type. Let $\eta$ be the generic point of $T$. Let $\sF$, $\sG$ be coherent sheaves on $V$. Assume that there is a morphism $\alpha^{\eta}: \sF_{\eta} \to \sG_{\eta}$ be a morphism of coherent sheaves on $V_{\eta}$. Then there is an open neighbourhood $T_0$ of $\eta$ and a morphism $\alpha : \sF_{T_0} \to \sG_{T_0}$ of coherent sheaves on $V_{T_0}$ which extends $\alpha^{\eta}$.
\end{lemma}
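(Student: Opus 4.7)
The argument adapts the method of Lemma \ref{lem-extend-sheaves}, with the simplification that the sheaves $\sF$ and $\sG$ already exist globally on $V$; only the morphism needs to be extended. First I would cover $V$ by finitely many affine opens $U_i = \Spec A_i$ with $A_i$ a $B$-algebra of finite type, and cover each overlap $U_i \cap U_j$ by finitely many affine opens $R^{ij}_s = \Spec C^{ij}_s$. Put $M_i = \Gamma(U_i, \sF)$ and $N_i = \Gamma(U_i, \sG)$; these are finitely presented $A_i$-modules. Writing $K = \mathrm{Frac}(B)$, the restriction $\alpha^{\eta}|_{(U_i)_{\eta}}$ corresponds to an $(A_i \otimes_B K)$-linear map $e_i^{\eta}: M_i \otimes_B K \to N_i \otimes_B K$.

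The key observation is that $e_i^{\eta}$ is determined by the images of a finite generating set $m_{i,1}, \dots, m_{i,r_i}$ of $M_i$, and each $e_i^{\eta}(m_{i,s}\otimes 1)$ is a finite sum of pure tensors $\sum_j n_{i,s,j} \otimes b_{i,s,j}^{-1}$ involving only finitely many denominators $b_{i,s,j} \in B \setminus \{0\}$. Inverting all of these elements yields a localization $B_0$ of $B$ such that every $e_i^{\eta}(m_{i,s}\otimes 1)$ already lies in $N_i \otimes_B B_0$, producing a candidate $(A_i \otimes_B B_0)$-linear map from $(A_i \otimes_B B_0)^{\oplus r_i}$ into $N_i \otimes_B B_0$. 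For this to factor through $M_i \otimes_B B_0$, the finitely many defining relations among the $m_{i,s}$ (this is where finite presentation of $M_i$ is used) must be sent to zero; they vanish after tensoring with $K$, so a further finite localization of $B_0$ suffices and produces the desired morphism $e_i: M_i \otimes_B B_0 \to N_i \otimes_B B_0$ extending $e_i^{\eta}$.

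It remains to glue. On each piece $R^{ij}_s$, the restrictions of $e_i$ and $e_j$ are two $(C^{ij}_s \otimes_B B_0)$-linear maps between the same finitely generated modules that agree after tensoring with $K$; their difference is therefore annihilated by some nonzero element of $B_0$. Carrying out these finitely many further localizations yields an open affine $T_0 \subseteq T$ over which the $e_i$'s agree on all overlaps, so they glue to the required morphism $\alpha: \sF_{T_0} \to \sG_{T_0}$ restricting to $\alpha^{\eta}$ at $\eta$. The only subtlety is the bookkeeping of successive finite localizations, and this is harmless because $V$ is of finite type over $T$, the sheaves $\sF$ and $\sG$ are coherent (hence locally finitely presented), and $T$ is noetherian, so at every step only finitely many shrinkings are required.
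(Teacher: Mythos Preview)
Your proposal is correct and follows exactly the method the paper intends: the paper itself leaves this lemma to the reader, pointing to the argument of Lemma~\ref{lem-extend-sheaves}, and you have carried out precisely that argument with the appropriate simplification (the sheaves are already global, only the morphism needs spreading out). Your explicit handling of the relations and the gluing step via killing finitely many $B$-torsion elements is the right way to unpack the paper's phrase ``defined by finitely many equations.''
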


\begin{lemma}
\label{lem-reflexive-general-fiber}
Let $V$ be a $T$-scheme of finite type, where $T$ is an integral noetherian scheme. Let $\eta$ be the generic point of $T$. Let $\sF$ be a coherent sheaf on $V$ such that $\sF_{\eta}$ is a reflexive sheaves on $V_{\eta}$. Then   for general $t\in T$, $\sF_t$ is reflexive on $V_t$.
\end{lemma}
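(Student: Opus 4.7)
The strategy is to use the concrete characterization of reflexivity from Lemma \ref{lem-char-reflexive} and to propagate an exact sequence from the generic fibre to nearby fibres using the extension results of Lemmas \ref{lem-extend-sheaves} and \ref{lem-extend-morphism-generic-global} together with generic flatness.

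First, applying Lemma \ref{lem-char-reflexive} to the reflexive sheaf $\sF_\eta$ on $V_\eta$, I obtain locally free sheaves $\sJ^\eta$, $\sK^\eta$ on $V_\eta$ and an exact sequence $0\to \sF_\eta \to \sJ^\eta \to \sK^\eta$. By Lemma \ref{lem-extend-sheaves}, after shrinking $T$ to an open neighbourhood $T_0$ of $\eta$, there exist locally free sheaves $\sJ$, $\sK$ on $V_{T_0}$ extending $\sJ^\eta$, $\sK^\eta$, together with a morphism $\beta:\sJ\to \sK$ whose restriction to $V_\eta$ recovers $\sJ^\eta \to \sK^\eta$. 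Lemma \ref{lem-extend-morphism-generic-global} then provides, after a further shrinking of $T_0$, an extension $\alpha:\sF_{T_0}\to \sJ$ of the generic injection $\sF_\eta \hookrightarrow \sJ^\eta$.

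Next, I check that the extended sequence $0\to \sF_{T_0}\xrightarrow{\alpha} \sJ\xrightarrow{\beta}\sK$ is itself exact on $V_{T_0}$. The coherent sheaves $\ker(\alpha)\subset \sF_{T_0}$ and $\mathrm{im}(\beta\circ\alpha)\subset \sK$ have trivial restriction to $V_\eta$ because the generic sequence is exact. Their supports are closed in $V_{T_0}$, hence their images in $T_0$ are constructible (Chevalley) and do not contain $\eta$; since $T_0$ is irreducible, these images are contained in proper closed subsets of $T_0$. After a further shrinking, I may therefore assume $\ker(\alpha)=0$ and $\beta\circ\alpha=0$.

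Finally, to transfer exactness to an arbitrary fibre, set $\sC=\mathrm{coker}(\alpha)$ and $\sC'=\sK/\sC$. By generic flatness (\cite[Thm. 6.9.1]{Gro65}), after one last shrinking of $T_0$ both $\sC$ and $\sC'$ are flat over $T_0$. For every $t\in T_0$, restricting the short exact sequences $0\to \sF_{T_0}\to \sJ\to \sC\to 0$ and $0\to \sC\to \sK\to \sC'\to 0$ to $V_t$ preserves exactness (no $\textup{Tor}^1$ contribution, thanks to the flatness of $\sC$ and $\sC'$), yielding an exact sequence $0\to \sF_t\to \sJ_t\to \sK_t$ with $\sJ_t$, $\sK_t$ locally free on $V_t$. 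Lemma \ref{lem-char-reflexive} applied on $V_t$ then shows that $\sF_t$ is reflexive. I expect the main technical point to be this last step: ensuring that left-exactness survives restriction to an \emph{arbitrary} fibre, which is precisely what the flatness of the two cokernels over $T_0$ buys me.
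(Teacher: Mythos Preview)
Your overall strategy matches the paper's: use Lemma~\ref{lem-char-reflexive} to rewrite reflexivity as the existence of a left-exact sequence into locally free sheaves, spread this sequence from $V_\eta$ out to some $V_{T_0}$ via Lemmas~\ref{lem-extend-sheaves} and~\ref{lem-extend-morphism-generic-global}, and then argue that exactness persists on fibres. The difference is only in the last step. The paper does not bother to make the sequence exact on the total space at all; it simply invokes \cite[Prop.~9.4.4]{Gro66}, which says directly that the set of $t\in T_0$ for which $0\to\sF_t\to\sJ_t\to\sK_t$ is exact is constructible, and since this set contains $\eta$ it contains an open neighbourhood of $\eta$. Your route through generic flatness of the successive cokernels is a legitimate hands-on substitute for that citation.

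There is, however, a genuine gap in your middle paragraph. You announce that $0\to\sF_{T_0}\xrightarrow{\alpha}\sJ\xrightarrow{\beta}\sK$ is exact on $V_{T_0}$, but the two conditions you actually check, $\ker(\alpha)=0$ and $\beta\circ\alpha=0$, only say it is a complex with injective first map; they say nothing about $\ker(\beta)\subseteq\mathrm{im}(\alpha)$. You then use exactness at $\sJ$ implicitly when you write $\sC'=\sK/\sC$: this presupposes that the induced map $\gamma\colon\sC=\mathrm{coker}(\alpha)\to\sK$ is injective, and without that your second short exact sequence $0\to\sC\to\sK\to\sC'\to 0$ is not available and the flatness-of-$\sC'$ argument does not give injectivity of $\sC_t\to\sK_t$. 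The fix is one more application of the same shrinking trick: $\ker(\gamma)$ is coherent and, since $V_\eta\to V_{T_0}$ is flat, $(\ker\gamma)|_{V_\eta}=\ker(\gamma_\eta)=0$ by exactness of the generic sequence at $\sJ^\eta$; hence the support of $\ker(\gamma)$ maps to a constructible subset of $T_0$ missing $\eta$, and after a further shrinking $\gamma$ is injective. With this extra sentence your argument is complete.
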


\begin{proof}
We may assume that $T$ is affine. Since $\sF_{\eta}$ is reflexive, by Lemma \ref{lem-char-reflexive} there are two locally free sheaves $\sJ^{\eta}$, $\sK^{\eta}$ such that we have an exact sequence $0\to \sF_{\eta} \stackrel{\beta^{\eta}}{\longrightarrow} \sJ^{\eta} \stackrel{\alpha^{\eta}}{\longrightarrow} \sK^{\eta}$. By Lemma \ref{lem-extend-sheaves}, there is an open neighbourhood $T_1$ of $\eta$ and a morphism $\alpha: \sJ \to \sK$ of locally free sheaves on $V_{T_1}$, such that  $\sJ_{\eta} = \sJ^{\eta}$, $\sK_{\eta} = \sK^{\eta}$, and $\alpha_{\eta}=\alpha^{\eta}$. From Lemma \ref{lem-extend-morphism-generic-global}, by shrinking $T_1$, we may assume that there is a morphism $\beta: \sF_{T_1} \to \sJ$ such that $\beta_{\eta}=\beta^{\eta}$.

Consider the morphisms  $\beta: \sF_{T_1} \to \sJ$ and $\alpha: \sJ \to \sK$. By \cite[Prop. 9.4.4]{Gro66}, the set $T_0$ of points $t\in T_1$ such that the sequence $0\to \sF_t\to \sJ_t\to \sK_t$ is exact is constructible. Moreover, we know that $\eta\in T_0$. Hence $T_0$ contains an open neighbourhood of $\eta$. For any $t\in T_0$, $\sJ_t$ and $\sK_t$ are locally free. Hence for any $t\in T_0$, the sheaf $\sF_t$ is reflexive on $V_t$ by Lemma \ref{lem-char-reflexive}. This completes the proof of the lemma.
\end{proof}

Combining Lemma \ref{lem-reflexive-general-fiber} and Proposition \ref{prop-base-change}, we can prove Proposition \ref{prop-reflexive-char-p}.

\begin{proof}[Proof of Proposition \ref{prop-reflexive-char-p}]
From Proposition \ref{prop-base-change}, there is an open subset $T_1$ of $T$ such that for any $t\in T_1$ and any $i\geqslant 0$, we have $R^i(g_{t})_{*}\sF_{t} \cong (R^ig_*\sF)|_{W_t}$. By hypothesis,  $R^i(g_{\eta})_{*}\sF_{\eta}$ is reflexive.  By Proposition \ref{prop-base-change}, this implies that $(R^ig_*\sF)|_{W_{\eta}}$ is reflexive. Hence by Lemma \ref{lem-reflexive-general-fiber},  there is an open subset $T_0$ of $T_1$ such that for all $t\in T_0$, the sheaf $(R^ig_*\sF)|_{W_t}$ is reflexive. Hence for all $t\in T_0$, $R^i(g_{t})_{*}\sF_t$ is reflexive.
\end{proof}

\subsection{Vanishing in positive characteristic}

We will prove a vanishing lemma (Lemma \ref{lem-vanishing-by-frobenius}) in positive characteristic. If $Y$ is a variety defined over a field of positive characteristic, then $Y$ is called Frobenius-split if $\sO_Y$ is a direct summand of $F_{abs*}\sO_Y$, where $F_{abs}:Y \to Y$ is the absolute Frobenius morphism (See \cite[\S 1.1]{BK07} for more details).

\begin{lemma}
\label{lem-vanishing-ample}
Let $Y$ be a normal projective variety. Let $\sF$ be a reflexive sheaf on $Y$ such that $\sF^{[\otimes m]}$ is a very ample invertible sheaf for some $m>0$. Let $\G$ be a coherent sheaf on $Y$. Then there is an integer $l$ such that $h^i(Y,\sG [\otimes] \sF^{[\otimes t]})=0$ for all $t\geqslant l$ and $i>0$.
\end{lemma}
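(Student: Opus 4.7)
The plan is to reduce to ordinary Serre vanishing for the very ample invertible sheaf $\sL:=\sF^{[\otimes m]}$. For each integer $t\geqslant 0$, write $t=qm+r$ with $0\leqslant r<m$, and establish the reflexive-hull identity
\[
\G\,[\otimes]\,\sF^{[\otimes t]} \;\cong\; \bigl(\G\,[\otimes]\,\sF^{[\otimes r]}\bigr)\otimes \sL^{\otimes q}. \qquad(*)
\]
Once $(*)$ is in hand, Serre vanishing applied to each of the finitely many coherent sheaves $\G_r:=\G\,[\otimes]\,\sF^{[\otimes r]}$ ($r=0,\dots,m-1$) against the very ample $\sL$ produces integers $N_r$ with $h^i(Y,\,\G_r\otimes\sL^{\otimes q})=0$ for all $q\geqslant N_r$ and $i>0$. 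Setting $l:=m\cdot\max_r N_r+m$ then yields $h^i(Y,\,\G\,[\otimes]\,\sF^{[\otimes t]})=0$ for every $t\geqslant l$ and every $i>0$, as required.

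The main work is therefore $(*)$, which is pure reflexive-hull arithmetic. Since $\sF$ is reflexive on the normal variety $Y$, it is locally free on an open $U\subseteq Y$ whose complement has codimension at least two; by \cite[Prop. 1.6]{Har80} any two reflexive sheaves on $Y$ that agree on $U$ are isomorphic. Applied to the reflexive sheaves $\sF^{[\otimes(a+b)]}$ and $\sF^{[\otimes a]}\,[\otimes]\,\sF^{[\otimes b]}$, which coincide on $U$ where $\sF$ is locally free and reflexive hulls are redundant, this yields $\sF^{[\otimes(a+b)]}\cong\sF^{[\otimes a]}\,[\otimes]\,\sF^{[\otimes b]}$. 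In particular $\sF^{[\otimes qm]}\cong(\sF^{[\otimes m]})^{[\otimes q]}=\sL^{[\otimes q]}=\sL^{\otimes q}$, the last equality because $\sL$ is invertible. Hence $\sF^{[\otimes t]}\cong\sL^{\otimes q}\otimes\sF^{[\otimes r]}$, and since tensoring with an invertible sheaf commutes with taking the double dual,
\[
\G\,[\otimes]\,\sF^{[\otimes t]}=\bigl(\G\otimes\sF^{[\otimes r]}\otimes\sL^{\otimes q}\bigr)^{**}=\bigl(\G\otimes\sF^{[\otimes r]}\bigr)^{**}\otimes\sL^{\otimes q}=\bigl(\G\,[\otimes]\,\sF^{[\otimes r]}\bigr)\otimes\sL^{\otimes q},
\]
which is $(*)$.

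The only genuine obstacle is the bookkeeping surrounding $(*)$ — keeping straight which tensor products require a reflexive hull and which do not — but once the locally-free locus of $\sF$ is used to transport identities across codimension two, the manipulation is routine. After $(*)$, the argument reduces cleanly to Serre vanishing applied to the $m$ coherent sheaves $\G_r$, and no further difficulty is anticipated.
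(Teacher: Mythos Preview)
Your proposal is correct and follows essentially the same route as the paper: write $t=qm+r$, use reflexive-hull arithmetic (the paper cites \cite[Prop.~1.6]{Har80} for the key identity $\sF^{[\otimes ms]}\cong(\sF^{[\otimes m]})^{\otimes s}$, while you spell out the codimension-two argument behind it) to reduce to the finitely many coherent sheaves $\sG_r=\sG[\otimes]\sF^{[\otimes r]}$ twisted by powers of the very ample $\sL$, and then apply Serre vanishing. The only differences are cosmetic --- your choice of $l$ and the level of detail in justifying $(*)$ --- and do not affect the substance of the argument.
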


\begin{proof}
Since $\sF^{[\otimes m]}$ is invertible, $\sF^{[\otimes ms]} \cong (\sF^{[\otimes m]})^{\otimes s}$ for all integer $s$ (See \cite[Prop. 1.6]{Har80}).  For any integer $r$ between $0$ and $m-1$, there is an integer $l_r>0$ such that  $H^i(Y,(\sG [\otimes] \sF^{[\otimes r]}) \otimes \sF^{[\otimes ms]})=\{0\}$ for all $i>0$ and $s\geqslant l_r$ (See \cite[Thm. III.5.2]{Har77}). Let $l=m(l_0+\cdots + l_{m-1})$, then for any integer $t\geqslant l$, we have $$\sG [\otimes] \sF^{[\otimes t]} = (\sG [\otimes] \sF^{[\otimes (t-m[\frac{t}{m}])]}) \otimes \sF^{[\otimes (m[\frac{t}{m}])]}$$ with $0\leqslant t-m[\frac{t}{m}] \leqslant m-1$, and $[\frac{t}{m}]\geqslant l_0+\cdots +l_{m-1}$. Hence $h^i(Y,\sG [\otimes] \sF^{[\otimes t]})=0$ for all $t\geqslant l$ and $i>0$.
\end{proof}

\begin{lemma}
\label{lem-vanishing-by-frobenius}
Let $g:Y\to Z$ be a morphism of normal projective varieties over a field of characteristic $p>0$. Let $L$ be a Weil divisor on $Z$ such that $mL$ is an ample Cartier divisor for some $m>0$. Assume that $Y$ is Frobenius-split. Then $h^j(Z,R^ig_*\sO_Y[\otimes] \sO_Z(L))=0$ for all  $j>0$ and $i\geqslant 0$.
\end{lemma}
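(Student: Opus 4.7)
The plan is to use iterated Frobenius splitting on $Y$ to reduce the stated vanishing for $\sO_Z(L)$ to the same vanishing for $\sO_Z(p^eL)$ with $e$ arbitrarily large, where Lemma \ref{lem-vanishing-ample} applies directly. Writing $\sG:=R^ig_*\sO_Y$, I will first show that $\sG$ is a direct summand of $F^e_{Z,*}\sG$ for every $e\geq 1$: since $Y$ is Frobenius-split, iterating gives $\sO_Y$ as a direct summand of $F^e_{Y,*}\sO_Y$; the commutation $g\circ F_Y=F_Z\circ g$, together with the fact that the absolute Frobenius is the identity on underlying topological spaces (so $R^jF_{*}=0$ for $j>0$), then yields $R^ig_*(F^e_{Y,*}\sO_Y)\cong F^e_{Z,*}\sG$, and applying $R^ig_*$ to the splitting produces the desired decomposition.

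The next step is to tensor this splitting with $\sO_Z(L)$ and apply the reflexivization $(\cdot)^{**}$, both of which preserve direct summands. This makes $\sG[\otimes]\sO_Z(L)$ a direct summand of $(F^e_{Z,*}\sG\otimes\sO_Z(L))^{**}$, and the crux is to identify
\begin{equation*}
(F^e_{Z,*}\sG\otimes\sO_Z(L))^{**}\;\cong\;F^e_{Z,*}\bigl(\sG[\otimes]\sO_Z(p^e L)\bigr).
\end{equation*}
Both sides are reflexive on $Z$ (the right-hand side by \cite[Cor.~1.7]{Har80}, since $F^e_Z$ is a finite morphism of normal varieties and $\sG[\otimes]\sO_Z(p^eL)$ is reflexive), so it suffices to construct an isomorphism on the open locus $U\subset Z$ where $L$ is Cartier, which has complement of codimension $\geq 2$. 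Over $U$, the ordinary projection formula $F^e_{Z,*}\sG\otimes\sO_Z(L)=F^e_{Z,*}(\sG\otimes F^{e*}_Z\sO_Z(L))$ combined with $F^{e*}_Z\sO_Z(L)=\sO_Z(p^eL)$ (valid for Cartier $L$) supplies the identification.

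Taking $H^j(Z,-)$ of this direct-summand relation and using once more that $F^e_Z$ is the identity on topological spaces, so $H^j$ commutes with $F^e_{Z,*}$, I obtain that $H^j(Z,\sG[\otimes]\sO_Z(L))$ is a direct summand of $H^j(Z,\sG[\otimes]\sO_Z(p^eL))$ for every $e\geq 1$. Since $mL$ is ample Cartier, some positive multiple $m'L$ is very ample, so $\sO_Z(L)^{[\otimes m']}=\sO_Z(m'L)$ is very ample and Lemma \ref{lem-vanishing-ample} applied with $\sF=\sO_Z(L)$ produces an integer $l$ with $h^j(Z,\sG[\otimes]\sO_Z(tL))=0$ for all $t\geq l$ and $j>0$. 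Choosing $e$ with $p^e\geq l$ then kills the higher cohomology of the target and hence of the direct summand, yielding the lemma.

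I expect the main obstacle to be the reflexive-hull bookkeeping in the second paragraph: because $L$ is only Weil, neither the projection formula nor the identity $F^{e*}\sO_Z(L)=\sO_Z(p^eL)$ holds literally on all of $Z$, so one must use the normality of $Z$ and the fact from \cite[Prop.~1.6]{Har80} that a reflexive sheaf on a normal variety is determined by its restriction to any open subset with codimension-two complement in order to promote the natural isomorphism on $U$ to an isomorphism on $Z$.
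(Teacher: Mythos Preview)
Your proof is correct and follows essentially the same route as the paper's: use the iterated Frobenius splitting of $\sO_Y$ to make $\sG=R^ig_*\sO_Y$ a direct summand of $F^e_{Z,*}\sG$ via the commutation $g\circ F_Y=F_Z\circ g$, identify $(F^e_{Z,*}\sG)[\otimes]\sO_Z(L)\cong F^e_{Z,*}(\sG[\otimes]\sO_Z(p^eL))$ by checking on the smooth locus and invoking reflexivity, and then apply Lemma~\ref{lem-vanishing-ample} for $e\gg 0$. The only cosmetic difference is that you justify $R^jF_{*}=0$ and $H^j\circ F_{Z,*}=H^j$ by noting the absolute Frobenius is the identity on underlying spaces, whereas the paper phrases this via ``$F_Y^e$, $F_Z^e$ finite $+$ Leray spectral sequence''; both arguments are valid and yield the same identifications.
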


\begin{proof}
Fix $j>0$ and $i \geqslant 0$. Let $F_Y^e:Y\to Y$ and $F_Z^e:Z\to Z$ be the compositions of $e$ absolute Frobenius morphisms for $e>0$. Then $\sO_Y$ is a direct summand of $(F_Y^e)_{*}\sO_Y$ since $Y$ is Frobenius-split. Hence we only have to prove $h^j(Z,R^ig_*((F_Y^e)_{*}\sO_Y)[\otimes] \sO_Z(L)) = 0$ for some $e>0$. Since $F_Y^e$ and $F_Z^e$ are finite morphisms, from the Leray spectral sequence, we have $$R^ig_*((F_Y^e)_{*}\sO_Y) \cong (F_Z^e)_{*}(R^ig_*\sO_Y).$$ Moreover, if $Z_{0}$ is the smooth locus of $Z$, then $L|_{Z_{0}}$ is a Cartier divisor, and $((F_Z^e)^*L)|_{Z_{0}} = (p^eL)|_{Z_{0}}$. Hence by the projection formula, $$((F_Z^e)_{*}(R^ig_*\sO_Y \otimes \sO_Z(p^eL)))|_{Z_{0}} \cong (((F_Z^e)_{*}(R^if_*\sO_Y)) \otimes \sO_Z(L))|_{Z_{0}}.$$  Since $(F_Z^e)_{*}(R^ig_*\sO_Y [\otimes] \sO_Z(p^eL))$ is a reflexive sheaf on $Z$ (See \cite[Cor. 1.7]{Har80}), and $Z$ is smooth in codimension $1$, by \cite[Prop. 1.6]{Har80}, we obtain $$(F_Z^e)_{*}(R^ig_*\sO_Y [\otimes] \sO_Z(p^eL))\cong ((F_Z^e)_{*}(R^ig_*\sO_Y)) [\otimes] \sO_Z(L).$$  Since $H^j(Z,R^ig_*((F_Y^e)_{*}\sO_Y)[\otimes] \sO_Z(L)) \cong
H^j(Z, ((F_Z^e)_{*}(R^ig_*\sO_Y)) [\otimes] \sO_Z(L))$, we have $$ H^j(Z,R^ig_*((F_Y^e)_{*}\sO_Y)[\otimes] \sO_Z(L))\cong 
 H^j(Z, (F_Z^e)_{*}(R^ig_*\sO_Y [\otimes] \sO_Z(p^eL)))$$ Since $F_Z^e$ is a finite morphism, the Leray spectral sequence shows that the right-hand side  is isomorphic to $H^j(Z, R^ig_*\sO_Y [\otimes] \sO_Y(p^eL)). $ Thus it is enough to prove that $h^j(Z,R^ig_*\sO_Y [\otimes] \sO_Z(p^eL))=0$ for some $e>0$. This is true by Lemma \ref{lem-vanishing-ample}.
\end{proof}

\subsection{Proof of Theorem \ref{thm-cohomology}} 
By the vanishing lemma in positive characteristic, we can prove Theorem \ref{thm-cohomology} in the special case when everything is defined over an algebraic number field.

\begin{lemma}
\label{lem-cohomology-number-field}
Let $f:M\to X$, $H$ and $D$ be as in Theorem \ref{thm-cohomology}. Assume further that everything is defined over an algebraic number field $K$. Then  $h^j(X, R^if_*\sO_M(D))=0$ for all $j>0$ and $i\geqslant 0$.
\end{lemma}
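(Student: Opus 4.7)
The plan is to implement the reduction-modulo-$p$ strategy sketched at the start of this section: spread out the whole datum over a localisation of the ring of integers of $K$, prove the required cohomology vanishing on general closed fibres (which live in positive characteristic) by combining Corollary \ref{cor-reflexive-char-p} with the Frobenius-splitting vanishing of Lemma \ref{lem-vanishing-by-frobenius}, and finally descend via Lemma \ref{lem-vanishing-from-general-to-generic} and flat base change.

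First I would pick a localisation $A$ of $\sO_K$ by inverting only finitely many primes, so that $A$ is a Dedekind domain with fraction field $K$ and all equations defining $f:M\to X$, $H$, and $D$ have coefficients in $A$. Set $T=\Spec A$ with generic point $\eta$. This produces a projective morphism $\phi:\cM\to\cX$ of $T$-schemes, a divisor $\cH$ on $\cX$, and $\cD=\phi^*\cH$ on $\cM$, whose generic fibres recover $f$, $H$, $D$ after base change from $K$ to $\mathbb{C}$. Shrinking $T$ by standard spreading-out results, I may assume that for every closed point $t\in T$ the variety $\cM_t$ is smooth projective with $\omega_{\cM_t}\cong\sO_{\cM_t}$, $\cX_t$ is normal projective, $\phi_t$ is projective and equidimensional, $\cH_t$ is $\mathbb{Q}$-ample integral Weil, and $\cD_t=\phi_t^*\cH_t$.

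With this setup in place, Corollary \ref{cor-reflexive-char-p} yields, for general closed $t\in T$, a natural isomorphism
$$R^i(\phi_t)_*\sO_{\cM_t}(\cD_t) \ \cong\ R^i(\phi_t)_*\sO_{\cM_t}\,[\otimes]\,\sO_{\cX_t}(\cH_t),$$
which places us exactly in the setting of Lemma \ref{lem-vanishing-by-frobenius}. Applying that lemma with $g=\phi_t$ and $L=\cH_t$ gives $h^j(\cX_t,R^i(\phi_t)_*\sO_{\cM_t}(\cD_t))=0$ for every $j>0$ and $i\geqslant 0$, \emph{provided} that $\cM_t$ is Frobenius-split. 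Granted this vanishing on general closed fibres, Lemma \ref{lem-vanishing-from-general-to-generic} transfers it to $\eta$, and the faithfully flat base change $\Spec\mathbb{C}\to\Spec K$ together with flat base change for higher direct images (\cite[Prop.~III.9.3]{Har77}) then recovers $h^j(X,R^if_*\sO_M(D))=0$, as required.

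The main obstacle is therefore the Frobenius-splitness of $\cM_t$ at a Zariski-dense set of closed points of $T$. Since $\omega_{\cM_t}\cong\sO_{\cM_t}$, this is equivalent to Frobenius acting bijectively on $H^{\dim\cM_t}(\cM_t,\sO_{\cM_t})$, i.e.\ to ordinary reduction. I would expect to establish it either by constructing a splitting directly from the symplectic form $\Omega$ (which reduces mod $p$ to a trivialisation of $\omega_{\cM_t}$ and thereby produces the section of $\omega_{\cM_t}^{1-p}$ needed to check the splitting criterion), combined with a genericity argument, or by invoking a known density result for ordinary reduction of irreducible symplectic manifolds. Once this arithmetic input is secured, every other ingredient is already in place and the rest of the argument is routine bookkeeping.
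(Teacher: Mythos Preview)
Your proposal is correct and follows exactly the same route as the paper's proof: spread out over $T=\Spec A$, verify the vanishing on general closed fibres via Corollary \ref{cor-reflexive-char-p} plus Lemma \ref{lem-vanishing-by-frobenius}, then descend through Lemma \ref{lem-vanishing-from-general-to-generic} and flat base change. The one step you flag as the ``main obstacle''---Frobenius-splitness of $\cM_t$ for general closed $t$---is precisely the arithmetic input the paper supplies by citing \cite[Thm.~1.1 and Lem.~3.3]{Dru12}; once that reference is inserted, your argument is complete and matches the paper line for line. (A small caution: your first suggested mechanism, producing a section of $\omega_{\cM_t}^{1-p}$ from the trivialisation of $\omega_{\cM_t}$, does not by itself yield a splitting---that extra condition is exactly ordinariness, so the ``genericity argument'' you allude to is the whole content; your second suggestion, density of ordinary reduction for symplectic varieties, is the correct formulation and is what \cite{Dru12} provides.)
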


\begin{proof}
Since $X$ has $\mathbb{Q}$-factorial klt singularities (See Theorem \ref{thm-fibration-is-lagrangian}), by Proposition \ref{prop-reflexive-char-0}, $R^if_*\sO_M(D)$ is reflexive on $X$ for all $i\geqslant 0$.

There is a ring $A \subseteq K$ of Krull dimension $1$ such that the field of fraction of $A$ is $K$, and the coefficients of equations defining $f:M\to X$, $H$ and $D$ are contained in $A$. Let $T=\mathrm{Spec}\, A$, and let $\eta$ be its generic point. Let $\phi: \cM \to \cX$ be the morphism of $T$-schemes given by the equations defining $f:M\to X$.  Fix some $i\geqslant 0$ and some $j>0$.

Let $\cD $ and $\cH$ be the divisors on $\cM$ and $\cX$ given by the equations defining $D$ and $H$. Then for a general closed point $t\in T$, $\cM_t$ is a smooth symplectic variety defined over the residue field of $t$, which is of positive characteristic. Moreover, since $\cX_{\eta}$ is normal, by \cite[Prop. 9.9.4]{Gro66}, $\cX_t$ is normal for general $t$. From \cite[Thm. 1.1 and Lem. 3.3]{Dru12}, we know that $\cM_t$ is Frobenius-split for general closed point $t\in T$.  Hence, by Lemma \ref{lem-vanishing-by-frobenius}, we have $h^j(\cX_t, R^i(\phi_{t})_{*}\sO_{\cM_t}[\otimes] \sO_{\cX_t}(\cH_t))=0$ for general closed point $t\in T$. By Corollary \ref{cor-reflexive-char-p}, this shows that $h^j(\cX_t, R^i(\phi_{t})_{*}\sO_{\cM_t}(\cD_t))= 0$. 

From Lemma \ref{lem-vanishing-from-general-to-generic}, we obtain $h^j(\cX_{\eta}, R^i(\phi_{\eta})_{*}\sO_{\cM_{\eta}}(\cD_{\eta}))= 0$. Note that $\phi_{\eta}: \cM_{\eta} \to \cX_{\eta}$ coincides with $f:M \to X$ after a field extension. Since a field extension is a flat base change, we conclude that $h^j(X, R^if_{*}\sO_{M}(D))= 0$ (See \cite [Prop.III.9.3]{Har77}). We also obtain that $h^j(X, \Omega_X^{[i]} [\otimes] \sO_X(H))= 0$ by Proposition \ref{prop-reflexive-char-0}.
\end{proof}

Now we will complete the proof of Theorem \ref{thm-cohomology}.

\begin{proof}[Proof of Theorem \ref{thm-cohomology}]
Fix some $j> 0$ and some $i\geqslant 0$. Let $\bar{\mathbb{Q}}$ be the field of algebraic numbers over $\mathbb{\mathbb{Q}}$. Then there is a $\bar{\mathbb{Q}}$-algebra $B$ of finite type which contains all the coefficients of the equations defining $f:M\to X$, $D$ and $H$. Let $T=\mathrm{Spec}\, B$ and let $\eta$ be its generic point. Then the  equations induce a morphism $\phi:\cM\to \cX$ of $T$-schemes, a divisor $\cH$ on $\cX$ and a divisor $\cD$ on $\cM$.  Since $R^i\phi_*\sO_{\cM}(\cD)$ is a coherent sheaf on $\cX$, by shrinking $T$ we may assume that this sheaf is flat over $T$ (See \cite[Thm. 6.9.1]{Gro65}).

For general closed point $t\in T$, $\phi_t$ is a Lagrangian fibration from a smooth variety $\cM_t$ to a normal variety $\cX_t$. In addition, for general closed point $t$, we know that $\phi_t$, $\cH_t$ and $\cD_t$ are all defined over an algebraic number field since the residue field of $t$ is just $\bar{\mathbb{Q}}$. Hence by Lemma \ref{lem-cohomology-number-field}, we have $h^j(\cX_t, R^i(\phi_{t})_{*}\sO_{\cM_t}(\cD_t))=0$. By Lemma \ref{lem-vanishing-from-general-to-generic}, this shows that $h^j(\cX_{\eta}, R^i(\phi_{\eta})_{*}\sO_{\cM_{\eta}}(\cD_{\eta}))=0$. Note that $\phi_{\eta}:\cM_{\eta} \to \cX_{\eta}$ coincides with $f:M\to X$ after a field extension.  Since a field extension is a flat base change, we have $H^j(X, R^if_{*}\sO_{M}(D))=\{0\}$  (See \cite [Prop.III.9.3]{Har77}).

By Proposition \ref{prop-reflexive-char-0} and the Leray spectral sequence, this proves the theorem.
\end{proof}

\begin{rem}
\label{rem-vanishing-dim-2}
If we assume that $\mathrm{dim}\, M=4$ and $\mathrm{dim}\, X=2$, then we can prove Theorem \ref{thm-cohomology} without using reduction modulo $p$. In fact, we only have to prove that $h^j(X,R^if_*\sO_M(D))=0$ for all $j>0$ and $i\geqslant 0$. Since $\omega_M\cong \sO_M$, this is true by the following lemma.
\end{rem}

\begin{lemma}
\label{lem-vanishing-dim-2}
Let $g:V\to W$ be a  projective equidimensional  fibration from a smooth projective variety to a normal projective surface. Let $C$ be a $\mathbb{Q}$-ample Weil divisor on $W$, and let $E=g^*C$. Then $h^j(X,R^ig_*(\sO_V(E)\otimes \omega_V))=0$ for all $j>0$ and $i\geqslant 0$.
\end{lemma}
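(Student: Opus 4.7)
The strategy is to reduce to Kollár's vanishing theorem \cite[Thm.~2.1]{Kol86a}, which requires an ample \emph{Cartier} divisor on the base. Fix $m>0$ such that $mC$ is ample and Cartier on $W$. The obstruction to applying Kollár directly is that $\sO_W(C)$ is only a rank-one reflexive sheaf, not a line bundle; the plan is to pass to a finite cover of $W$ on which $C$ becomes Cartier, apply Kollár there, and descend.

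First, I would construct a finite surjective morphism $p\colon W'\to W$ from a normal projective surface $W'$, \'etale over the smooth locus $W^{0}$ of $W$, such that $p^*C$ is a Cartier divisor on $W'$. At each singular point of $W$ the class $[C]$ has finite order in the local divisor class group (as a consequence of $C$ being $\mathbb{Q}$-Cartier), so index-one covers exist locally; the delicate point is to patch these into a global finite cover, for instance via a cyclic-cover construction on the reflexive sheaf of algebras $\bigoplus_{i=0}^{m-1}\sO_W(-iC)$ whose multiplication is given by a carefully chosen section of $\sO_W(mC)$, followed by normalization. Carrying out and verifying this construction for a general normal projective surface (without the klt hypothesis, which would make it standard) is the main technical obstacle of the proof.

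Next, I would form the fibre product $V_1=V\times_W W'$ and take a resolution of singularities $\rho\colon V'\to V_1$ with $V'$ smooth projective; let $g'\colon V'\to W'$ and $q\colon V'\to V$ denote the induced morphisms. Since $p$ is \'etale over $W^{0}$, the map $q$ is \'etale over $V^{0}:=g^{-1}(W^{0})$, whose complement has codimension $\ge 2$ in $V$ by the equidimensionality of $g$. Kollár's vanishing applied to $g'\colon V'\to W'$ and the ample Cartier line bundle $\sO_{W'}(p^*C)$ gives
\[ H^j\bigl(W',\,R^ig'_*\omega_{V'}\otimes \sO_{W'}(p^*C)\bigr)=0\qquad (j>0,\ i\ge 0), \]
and the projection formula together with $g'^*\sO_{W'}(p^*C)=\sO_{V'}(q^*E)$ (using $p\circ g'=g\circ q$) rewrites this as $H^j(W',R^ig'_*(\omega_{V'}\otimes\sO_{V'}(q^*E)))=0$ for $j>0$.

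Finally, I would descend to $W$: since $p$ is finite, $H^j(W,p_*\cdot)=H^j(W',\cdot)$ and $p_*R^ig'_*=R^ig_*q_*$, giving $H^j(W,R^ig_*q_*(\omega_{V'}\otimes\sO_{V'}(q^*E)))=0$. Over $V^{0}$ the morphism $q$ is \'etale Galois of degree $m$, so $\omega_{V'}\otimes\sO_{V'}(q^*E)$ agrees with $q^*(\omega_V\otimes\sO_V(E))$ there; in characteristic zero the trace map splits $q_*\sO_{V'}|_{V^{0}}=\sO_V|_{V^{0}}\oplus \cN$, exhibiting $\omega_V\otimes\sO_V(E)$ as a direct summand of $q_*(\omega_{V'}\otimes\sO_{V'}(q^*E))|_{V^{0}}$. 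Since $V\setminus V^{0}$ has codimension $\ge 2$ and the line bundle $\omega_V\otimes\sO_V(E)$ is reflexive, this direct summand extends across $V$, and passing to it in the vanishing above yields $h^j(W,R^ig_*(\omega_V\otimes\sO_V(E)))=0$ for all $j>0$ and $i\ge 0$.
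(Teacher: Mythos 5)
Your overall strategy (pass to a finite cover where the divisor becomes Cartier, apply Koll\'ar's vanishing there, and descend via a direct summand of a pushforward) is the same as the paper's, but the cover you propose to construct in the first step does not exist in general, and this is not a patchable technicality. A finite surjective $p\colon W'\to W$ that is \'etale over the smooth locus $W^{0}$ and makes $p^*C$ Cartier would in particular be \'etale in codimension one; if $W^{0}$ is algebraically simply connected, any such $p$ is an isomorphism on each component, so $p^*C$ is Cartier only if $C$ already was. A concrete counterexample is the quadric cone $W\subset \p^3$ with $C$ a ruling line through the vertex: $W^{0}$ is simply connected, $C$ is ample and $2C$ is Cartier, but $C$ is not Cartier at the vertex. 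Worse, this is exactly the situation of the paper's application, where the base surface has algebraically simply connected smooth locus. The obstruction is that $\sO_W(mC)$ is ample, hence never trivial, so the ``index-one cover'' algebra $\bigoplus_{i=0}^{m-1}\sO_W(-iC)$ cannot be given a multiplication by a nowhere-vanishing section; any section $s$ of $\sO_W(mC)$ vanishes along an ample divisor $B$, and the resulting cyclic cover is necessarily branched along $B\cap W^{0}$.

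The construction you mention parenthetically \emph{is} the right one, but with different properties than you assign to it, and the downstream steps must change accordingly. Take $k$ with $kC$ very ample and a general $B\in |kC|$ (so $B$ is smooth and avoids $\mathrm{Sing}\,W$), and let $p\colon W'\to W$ be the cyclic cover attached to $\sO_W(B)\cong \sO_W(C)^{\otimes k}$. One does not get $p^*C$ Cartier by \'etaleness; rather, $p^*C$ is linearly equivalent to the reduced divisor $\mathrm{Supp}(p^*B)$, which lies in the locus where $W'$ is smooth, so $\sO_{W'}(p^*C)$ is an ample invertible sheaf and Koll\'ar applies. On the top, instead of resolving $V\times_W W'$ (which introduces $q$-exceptional divisors over $g^{-1}(\mathrm{Sing}\,W)$ along which your identification $g'^*\sO_{W'}(p^*C)=\sO_{V'}(q^*E)$ can fail, since the two Cartier divisors only agree on an open set), take $V'$ to be the cyclic cover of $V$ attached to $\sO_V(A)\cong\sO_V(E)^{\otimes k}$ with $A=g^*B$ smooth by Bertini; then $V'$ is automatically smooth and finite over $V$. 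Finally, since $q$ is branched, $\omega_{V'}\neq q^*\omega_V$ even over $V^{0}$, so the trace-splitting argument as you state it does not produce the right summand; instead one uses the explicit formula $q_*\omega_{V'}\cong\bigoplus_{r=0}^{k-1}\omega_V\otimes\sO_V(rE)$, which together with the projection formula exhibits $\omega_V\otimes\sO_V(E)$ as a direct summand of $q_*\bigl(\omega_{V'}\otimes\sO_{V'}(q^*E)\bigr)$ and completes the descent.
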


\begin{proof}
Let $k$ be a positive integer such that $kC$ is very ample. Then there is a prime Cartier divisor $B$ on $W$ such that 
\begin{enumerate}
\item[(1)] $\sO_W(B)\cong \sO_W(kC)$
\item[(2)] $B$ is smooth and is contained in the smooth locus of $W$
\item[(3)] the divisor $A=f^*B$ is smooth.
\end{enumerate}

Let $p:W'\to W$ be the cyclic cover with respect to   $\sO_W(B)\cong (\sO_W(C))^{\otimes k}$, and let $q:V'\to V$ be the cyclic cover with respect to   $\sO_V(A)\cong (\sO_V(E))^{\otimes k}$ (See \cite[Def. 2.52]{KM98}). Then $V'$ is smooth, $W'$ is smooth along $p^{-1}(B)$, and there is a projective equidimensional  fibration $g':V'\to W'$ induced by $g$. We obtain the following commutative diagram.

\centerline{
\xymatrix{
V'\ar[d]^{g'} \ar[r]^{q}& V \ar[d]^{g}\\
W' \ar[r]^{p} & W
}}

Let $E'=q^*E$ and $C'=p^*C$. Then $C'$ is linearly equivalent to the support of $p^*B$ which is a Cartier divisor. Since $C'$ is also ample, by  the projection formula and \cite[Thm. 2.1]{Kol86a}, we have $$h^j(W', R^ig'_*(\sO_{V'}(E')\otimes \omega_{V'}))=h^j(W', \sO_{W'}(C')\otimes R^ig'_*\omega_{V'})=0$$ for all $j>0$ and $i\geqslant 0$.

Since $q$ and $p$ are finite morphisms, by the Leray spectral sequence, we have $$h^j(W, R^ig_*(q_*(\sO_{V'}(E')\otimes \omega_{V'})))=0$$ for all $j>0$ and $i\geqslant 0$. Note that $q_*\omega_{V'}=\omega_V \otimes (\sum_{r=0}^{k-1}\sO_V(rE))$.  By the projection formula, we obtain that  $\sO_V(E)\otimes \omega_{V}$ is a direct summand of $q_*(\sO_{V'}(E')\otimes \omega_{V'})$. Hence,  $h^j(W, R^ig_*(\sO_{V}(E)\otimes \omega_{V}))=0$ for all $j>0$ and $i\geqslant 0$.
\end{proof}

\section{Rational surfaces}
\label{Rational surfaces}

In this section, we  work over $\mathbb{C}$, the field of complex numbers. A point in a variety will always refer  to a closed point. The aim is to prove Theorem \ref{thm-rat-surface}. Let $X$ be a surface satisfying the conditions in Theorem \ref{thm-rat-surface} and let $T$ be a  minimal curve in $X$. Then we have the  following lemma.

\begin{lemma}
\label{Weil-divisor-class}
Let $X$ be a Fano surface with klt singularities whose smooth locus is algebraically simply connected. Assume that $X$ has Picard number $1$, and that $h^0(X,\sO_X(H))  +h^0(X,\sO_X(H+K_X)) \geqslant 3$ for all $\mathbb{Q}$-ample Weil divisor $H$. Let $T$ be a minimal curve in $X$. Then the Weil divisor class group $\mathrm{Cl}(X)$ is generated by the class of $T$.
\end{lemma}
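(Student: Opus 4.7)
The plan is to first show that $\mathrm{Cl}(X) \cong \mathbb{Z}$ and then to use the numerical hypothesis together with the minimality of $T$ to pin down a generator.

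First I would establish $\mathrm{Cl}(X) \cong \mathbb{Z}$ by assembling standard surface-theoretic facts. Klt surface singularities are rational and hence $\mathbb{Q}$-factorial, so every Weil divisor on $X$ is $\mathbb{Q}$-Cartier and therefore $\mathrm{Cl}(X) \otimes \mathbb{Q} = \Pic(X) \otimes \mathbb{Q}$, which has rank $1$ by the Picard number hypothesis. Since the singular locus of $X$ is zero-dimensional, restriction gives an isomorphism $\mathrm{Cl}(X) \cong \Pic(X_{\mathrm{sm}})$. The algebraic simple connectedness of $X_{\mathrm{sm}}$ rules out nontrivial connected \'etale cyclic covers, so $\Pic(X_{\mathrm{sm}})$ has no torsion. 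Finally, a klt Fano surface is rational (it is rationally connected, and rationally connected surfaces are rational), so $\Pic$ of any smooth resolution is finitely generated and $\mathrm{Cl}(X)$ is a quotient of it by the classes of exceptional curves, hence finitely generated. A torsion-free finitely generated abelian group of rank $1$ is $\mathbb{Z}$.

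Next I would fix a generator $H_0$ of $\mathrm{Cl}(X)$, oriented so that $H_0$ lies on the $\mathbb{Q}$-ample side, and write $T \sim d H_0$ and $-K_X \sim m H_0$ with $d, m \in \mathbb{Z}_{\geqslant 1}$; the goal is $d=1$. Applied with $H = H_0$, the assumed inequality reads
\[
h^0(X,\sO_X(H_0)) + h^0(X,\sO_X(H_0 + K_X)) \;\geqslant\; 3.
\]
Since $H_0 + K_X \sim (1-m)H_0$, the second term is $0$ for $m\geqslant 2$ (anti-ample class) and $1$ for $m=1$ (trivial class), so in either case $h^0(X,\sO_X(H_0)) \geqslant 2$. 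I would then choose a nonzero effective divisor $E_0 = \sum_i a_i C_i \in |H_0|$ with all $a_i \geqslant 1$ and $C_i$ irreducible. From $T \sim d E_0$, the minimality of $T$ yields
\[
(-K_X)\cdot T \;=\; d\sum_i a_i\bigl((-K_X)\cdot C_i\bigr) \;\geqslant\; d\,(-K_X)\cdot T,
\]
because $(-K_X)\cdot C_i \geqslant (-K_X)\cdot T$ for every component and $\sum a_i \geqslant 1$. Since $(-K_X)\cdot T > 0$, this forces $d \leqslant 1$, so $d=1$ and $[T]$ generates $\mathrm{Cl}(X)$.

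The combinatorial heart (producing $E_0$ and comparing $(-K_X)$-degrees) is immediate once $\mathrm{Cl}(X)\cong \mathbb{Z}$ is in place. The main obstacle I expect is the structural first step: cleanly deducing $\mathbb{Q}$-factoriality, the isomorphism $\mathrm{Cl}(X) \cong \Pic(X_{\mathrm{sm}})$, torsion-freeness from algebraic simple connectedness, and finite generation, all from the stated hypotheses. Each piece is classical, but they must be strung together carefully, in particular to exclude rank-$1$ but non-finitely-generated subgroups of $\mathbb{Q}$ such as $\mathbb{Z}[1/p]$.
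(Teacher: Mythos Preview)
Your argument is correct, but it follows a genuinely different route from the paper's. The paper does not first prove $\mathrm{Cl}(X)\cong\mathbb{Z}$ abstractly. Instead, given an arbitrary Weil divisor $D$, it picks the largest integer $a$ with $(D-aT)\cdot T>0$, sets $G=D-aT$, applies the numerical hypothesis to $H=G$ to obtain $h^0(X,\sO_X(G))\geqslant 1$ (after first applying it to $H=-K_X$ to get $h^0(\sO_X(-K_X))\geqslant 2$, which forces $h^0(\sO_X(G))\geqslant h^0(\sO_X(G+K_X))$), and then uses minimality of $T$ together with $0<G\cdot T\leqslant T\cdot T$ to conclude that $G$ is numerically equivalent to $T$. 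At this point the paper invokes a cited result (essentially that on a Fano variety with rational singularities numerical triviality implies torsion) and the \'etale cyclic-cover trick over the smooth locus to upgrade numerical to linear equivalence.

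Your approach front-loads the structural work: you first assemble $\mathbb{Q}$-factoriality, $\mathrm{Cl}(X)\cong\Pic(X_{\mathrm{sm}})$, torsion-freeness via simple connectedness, and finite generation via rationality of klt Fano surfaces, to get $\mathrm{Cl}(X)\cong\mathbb{Z}$ outright; only then do you use the hypothesis (once, at $H=H_0$) to produce an effective representative of the generator and compare $(-K_X)$-degrees. This is conceptually cleaner and avoids the external citation, at the cost of needing the rationality of $X$ for finite generation. The paper's version is more hands-on and keeps the argument internal to divisor arithmetic, trading the rationality input for the ``numerically trivial $\Rightarrow$ torsion'' lemma. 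Both routes use the simple connectedness of $X_{\mathrm{sm}}$ in exactly the same way to kill torsion; they just invoke it at different stages.
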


\begin{proof}
We have $h^0(X,\sO_X(-K_X))+h^0(X,\sO_X(K_X-K_X)) \geqslant 3$. Hence $h^0(X,\sO_X(-K_X)) \geqslant 2$. Let $D$ be a Weil divisor in $X$, and let $a$ be the largest integer such that $(D-aT)\cdot T > 0$. Set $G= D-aT$. Then $G$ is a  $\mathbb{Q}$-ample Weil divisor since $X$ has Picard number $1$. By hypothesis, we have  $$h^0(X,\sO_X(G))+h^0(X,\sO_X(K_X+G)) \geqslant 3.$$ Since $h^0(X,\sO_X(-K_X)) \geqslant 1$, we obtain that $h^0(X,\sO_X(G)) \geqslant h^0(X,\sO_X(K_X+G))$. Hence, $h^0(X,\sO_X(G)) \geqslant 1$. However, by the choice of $a$, we have $0< G \cdot T \leqslant T \cdot T$. Since $T$ is a minimal curve, this implies that $G$ is numerically equivalent to $T$. Hence $D$ is numerically equivalent to $(a+1)T$. By \cite[Lem. 2.6]{AD12}, there is a smallest positive integer $k$ such that $k(D-(a+1)T)$ is linearly equivalent to the zero divisor. 

There is a cyclic cover $X'\to X$ with respect to  $\sO_X(D-(a+1)T)^{\otimes k} \cong \sO_X$, which is \'etale in codimension $1$ (See \cite[Def. 2.52]{KM98}). Since the smooth locus of $X$ is algebraically simply connected,   we can only have $k=1$. Thus, we obtain that $D$ is linearly equivalent to $(a+1)T$. This shows that $\mathrm{Cl}(X)=\mathbb{Z}\cdot [T]$
\end{proof}

\textit{Outline of the proof of Theorem \ref{thm-rat-surface}.} Let  $T$ be a minimal curve.  Since $-K_X$ is ample, we have $h^0(X,\sO_X(T+K_X)) \leqslant 1$, and the  equality holds if and only if $-K_X$ is linearly equivalent to $T$. Hence, by Lemma \ref{Weil-divisor-class}, the conditions in Theorem \ref{thm-rat-surface} imply that 
\begin{enumerate}
\item either $h^0(X,\sO_X(-K_X))\geqslant 2$ and $\mathrm{Cl}(X)=\mathbb{Z}\cdot [K_X]$, 

\item or $h^0(X,\sO_X(T))\geqslant 3$, $h^0(X, \sO_X(T+K_X))=0$, and $\mathrm{Cl}(X)=\mathbb{Z}\cdot [T]$
\end{enumerate}
If we are in the first case, then we will   prove that $X$ has canonical singularities. After that, with the help of the classification of Fano surfaces with canonical singularities, we can show that   $X \cong S^c(E_8)$ or $X \cong S^n(E_8)$ (Theorem \ref{thm-surface-1}). In the second case, we will show directly that $X\cong \p^2$ (Theorem \ref{thm-surface-2}). After these two theorems, it remains to exclude the case of $S^c(E_8)$. The complete  proof of Theorem \ref{thm-rat-surface} will be given in section \ref{proof-surface}.

\subsection{Preliminaries}
\label{surface-pre}

In this subsection, we recall some basic properties of complex surfaces.  Let $X$ be a normal surface. A morphism $r: \widetilde{X} \to X$ is said to be a minimal resolution of singularities (or minimal resolution) if $\widetilde{X}$ is smooth, and $K_{\widetilde{X}}$ is relatively nef. For any complex surface, there is a unique minimal resolution, and every resolution of singularities factors through the minimal resolution. If $X$ is $\mathbb{Q}$-factorial, and $r:\widetilde{X} \to X$ is the minimal resolution, then we can write $K_{\widetilde{X}}=r^*K_X+\sum a_iE_i$, where the $E_i$'s are the $r$-exceptional divisors. For every $i$, $a_i$ is called the discrepancy of $E_i$ over $X$. An exceptional divisor $E_i$ is said to be crepant over $X$ if $a_i=0$. If $a_i=0$ (\textit{resp.} $-1<a_i<0$) for all $i$, then we say that $X$ has canonical singularities (\textit{resp.} klt singularities). Klt surface singularities are quotient singularities. In particular, they are $\mathbb{Q}$-factorial rational singularities.  Canonical surface singularities are Du Val singularities (or ADE singularities).  See \cite[\S 2.3, \S 4 and \S 5]{KM98}) for more details on singularities.

Let $X$ be a quasi-projective  surface with klt singularities and let $p:X\to Z$ be a projective morphism. Then we can run a   minimal model program (MMP for short) for $X$ over $Z$. We obtain a sequence of extremal contractions  over $Z$, $$X=X_0 \to X_1 \to \cdots \to X_n $$ such that either $K_{X_n}$ is nef over $Z$ or there is a Mori fibration $X_n \to B$ over $Z$. We will call  $X_n$  the result of this MMP.  For more details on MMP, see, \textit{e.g.}, \cite[\S 1.4 and \S 3.7]{KM98}.

The following lemma gives an estimation on intersection numbers in a projective surface. 

\begin{lemma}
\label{intersection-smooth-point}
Let $X$ be a normal projective $\mathbb{Q}$-factorial surface. Let $C$, $D$ be two different projective integral curves in $X$. Assume that both $C$ and $D$ pass through a smooth point $x$ of $X$. Then $C\cdot D \geqslant 1$.
\end{lemma}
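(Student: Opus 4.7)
Since $X$ is $\mathbb{Q}$-factorial, I choose positive integers $n,m$ such that $nC$ and $mD$ are Cartier divisors, and work with the standard intersection theory of Cartier divisors on the normal projective surface $X$, using the identity $C\cdot D=\frac{1}{nm}(nC)\cdot(mD)$. Because $C$ and $D$ are distinct integral curves they share no component, so $nC$ and $mD$ meet in a zero-dimensional scheme. The product $(nC)\cdot(mD)$ then decomposes as a finite sum $\sum_{p\in C\cap D}(nC\cdot mD)_p$ of local intersection multiplicities, each of which is a nonnegative integer (the length of $\mathcal{O}_{X,p}/(F_p,G_p)$ for local equations $F_p,G_p$ of the Cartier divisors $nC$ and $mD$).

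The key is to bound the local contribution at the smooth point $x$ from below by $nm$. Since $x$ is a smooth point of $X$, the local ring $\mathcal{O}_{X,x}$ is regular of dimension two, and I can pick irreducible local equations $f,g\in\mathfrak{m}_{X,x}$ for $C$ and $D$ respectively. Then $f^n$ and $g^m$ are local equations for $nC$ and $mD$, so
\[
(nC\cdot mD)_x\;=\;\dim_{\mathbb{C}}\mathcal{O}_{X,x}/(f^n,g^m).
\]
Because $C\neq D$, the elements $f$ and $g$ are coprime and form a regular sequence; a short induction on $n$ (using that multiplication by $f^{n-1}$ induces an injection $\mathcal{O}_{X,x}/(f,g^m)\hookrightarrow\mathcal{O}_{X,x}/(f^n,g^m)$) gives $\dim\mathcal{O}_{X,x}/(f^n,g^m)=nm\cdot\dim\mathcal{O}_{X,x}/(f,g)$, and the rightmost factor is at least $1$ since $f,g\in\mathfrak{m}_{X,x}$ so $(f,g)\neq\mathcal{O}_{X,x}$.

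Combining the two steps, $(nC)\cdot(mD)\ge(nC\cdot mD)_x\ge nm$, and dividing by $nm$ yields $C\cdot D\ge 1$, as required.

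\textbf{Expected obstacle.} The only delicate point is bookkeeping around the $\mathbb{Q}$-Cartier scaling: intersection multiplicities at the singular points of $X$ are fractional in the Weil-divisor sense and could in principle be negative, but once one passes to the Cartier divisors $nC$ and $mD$ all contributions are genuine lengths, hence nonnegative. Everything else is a local computation at a smooth point, where the intersection theory reduces to standard commutative algebra in a regular two-dimensional local ring.
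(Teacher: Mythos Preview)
Your argument is correct. The key step---that for two effective Cartier divisors with no common component on a normal projective surface the intersection number equals the sum of the local lengths $\dim_{\mathbb C}\mathcal O_{X,p}/(F_p,G_p)$---holds because $(nC)\cdot(mD)=\deg\bigl(\mathcal O_X(nC)|_{mD}\bigr)$, and restricting the canonical section of $\mathcal O_X(nC)$ to the scheme $mD$ gives a section whose zero scheme has exactly that length. Your local computation at the smooth point is standard commutative algebra in a regular local ring. (Incidentally, you only need one of the two multiples: once $nC$ is Cartier you can intersect with the integral curve $D$ directly and get $(nC)\cdot D\ge n$ from the term at $x$.)

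The paper argues differently: it pulls back to the minimal resolution $r:\widetilde X\to X$, writes $C\cdot D=r^*C\cdot (r^{-1})_*D\ge (r^{-1})_*C\cdot (r^{-1})_*D$ via the projection formula and effectivity of the exceptional part of $r^*C$, and then invokes the smooth case on $\widetilde X$. So the paper reduces to a smooth ambient surface, whereas you stay on $X$ and separate the smooth-point contribution from the rest using Cartier local equations. Your route is more self-contained---it avoids resolution and the negativity argument hidden in $r^*C\ge (r^{-1})_*C$---while the paper's route makes the geometry transparent and immediately generalises to comparing $C\cdot D$ with the intersection of the strict transforms. Both are short and either would be acceptable here.
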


\begin{proof}
Let $r:\widetilde{X} \to X$ be the minimal resolution of $X$. Then by the projection formula, we obtain $$C \cdot D = r^*C \cdot (r^{-1})_*D \geqslant  (r^{-1})_*C \cdot (r^{-1})_*D.$$ Since $r$ is minimal, and $X$ is smooth at $x$, $r^{-1}$ is an isomorphism around $x$. Hence both $(r^{-1})_*C$ and $(r^{-1})_*D$ pass through the smooth point $y$, where $\{y\}=r^{-1}(\{x\})$. Since $\widetilde{X}$ is smooth, we have $(r^{-1})_*C \cdot (r^{-1})_*D \geqslant 1$ by \cite[Prop. V.1.4]{Har77}. Hence  $C\cdot D \geqslant 1$.
\end{proof}

Let $X$ be a complex Fano surface with canonical singularities such that the smooth locus of $X$ is simply connected. Assume that $X$ has Picard number $1$. Then there are at most two singular points in $X$ (See \cite{MZ88}). If $X$ is smooth, then $X \cong \p^2$. If there is one singular point, then possible types of singularities of this point are $$A_1,\ A_4,\ D_5,\ E_6,\ E_7,\ E_8.$$ If there are two singular points, then one is of type $A_1$, and the other is of type $A_2$. For more details on $ADE$ singularities, see for example \cite[\S 5.2]{KM98}. Except the case when there is a singular point of type $E_8$, for each type of singularities, there is exactly one isomorphic class for the surface. If the surface has a singular point of type $E_8$, then there are exactly two  isomorphic classes   $S^c(E_8)$ (See \cite[Lem. 3.6.(1)]{KM99})  and $S^n(E_8)$ (See \cite[Lem. 3.6.(2)]{KM99}) of such surfaces.

We recall some properties of the surfaces $S^c(E_8)$ and $S^n(E_8)$. These two surfaces can be constructed as follows. Choose a singular cubic rational curve $C$ in $X_1=\p^2$. Let $x$ be one of the  smooth inflection points of $C$. Recall that $x$ is an inflection point if the line $L$ in $X_1$ tangent to $C$ at point $x$ meets $C$   at $x$ with order at least $3$ (See \cite[\S IV.1]{EH00} for more details).  Starting from $X_1=\p^2$, we blow up the point $x$, and we get a surface $X_2$. We construct a rational surface $X_9$ by induction as follows. Assume that $X_i$ is constructed ($i\geqslant 2$). Then $X_{i+1}\to X_i$ is the blow-up of the intersection point of the strict transform of $C$ in $X_i$ and the exceptional divisor of $X_i\to X_{i-1}$. There are exactly eight $(-2)$-curves in $X_9$: the strict transform of $L$, and the strict transforms of all the exceptional curves of $X_8\to X_1$.  Let $X_9\to X$ be the blow-down of all $(-2)$-curves in $X_9$. Then $X$ is isomorphic to $S^c(E_8)$ if $C$ is a cuspidal rational curve, or is isomorphic to  $S^n(E_8)$ if $C$ is a nodal curve. (This is why we use the terminology of these two surfaces. The index ``$c$'' is for cuspidal, and the index ``$n$'' is for nodal) The linear system $|-K_X|$ has dimension $1$, and has a unique basepoint  which is a smooth point of $X$. A  general member  of this linear system is a smooth elliptic curve in $X$. If $Y\to X$ is the blow up of the basepoint, then there is a fibration $p_Y:Y\to \p^1$, induced by $|-K_X|$, whose general fibers are elliptic curves. There is a unique  rational curve    in $|-K_X|$ which passes through the singular point of $X$. If $X\cong S^c(E_8)$ then there exist exactly two rational curves in $|-K_X|$. The one contained in the smooth locus of $X$ is   the strict transform of $C$ in $X$. If $X \cong S^n(E_8)$, then there are exactly three rational curves in $|-K_X|$. Both of the two rational curves contained in the smooth locus of $X$ are singular rational curves with a node (one of them is the strict transform of $C$ in $X$).

\begin{center}
\begin{tikzpicture}
\draw (-1,1) -- (-5,1);
\filldraw (-4,1) circle  node[above] {Nodal curve};
\draw (-2,2) -- (-2,-2);
\draw (-1,2) -- (-4,-1);
\filldraw (-4,-1) circle  node[below] {Nodal curve};
\filldraw (-2,1) circle (2pt) node[below right] {Basepoint};
\filldraw (-2,-1) circle (2pt) node[below right] {Singular point};
\filldraw (-3,-2) circle  node[below] {$X\cong S^n(E_8)$};
\draw (3,1) -- (7,1);
\filldraw (4,1) circle  node[above] {Cuspidal curve};
\draw (6,2) -- (6,-2);
\filldraw (6,1) circle (2pt) node[below right] {Basepoint};
\filldraw (6,-1) circle (2pt) node[below right] {Singular point};
\filldraw (5,-2) circle  node[below] {$X\cong S^c(E_8)$};
\filldraw (2,-3) circle  node[below] {Rational curves in $|-K_X|$};
\end{tikzpicture}
\end{center}

%%Assume that the singular point of $W$ is on the fiber $q^*z$. If $\widetilde{W}\to W$ is the minimal resolution, then $\tilde{q}^*z$ is a singular fiber of Kodaira symbol $\mathrm{II}^*$, where $\tilde{q}:\widetilde{W} \to \p^1$, induced by $q$, is a minimal elliptic fibration  (See \cite[\S V.7]{BHPV04}).

\subsection{Simple connectedness}
We collect some properties on the algebraic simple connectedness of projective surfaces. A variety $X$ is called algebraically simply connected if  any \'etale morphism $\tau: X'\to X$, with $X'$  irreducible,  is an isomorphism.

\begin{lemma}
\label{sim-con-open}
Let $X$ be a variety. Let $U$ be a non-empty Zariski open subset of $X$. If $U$ is algebraically simply connected, then so is $X$.
\end{lemma}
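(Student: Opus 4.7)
The plan is to restrict any given étale cover of $X$ to an étale cover of $U$, use the hypothesis on $U$ to conclude an isomorphism there, and then extend back to $X$.

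Concretely, I would let $\tau\colon X'\to X$ be an étale morphism (in the implicit finite sense required for the notion of algebraic simple connectedness to be non-trivial) with $X'$ irreducible. Since $\tau$ is étale its image is open, and since $\tau$ is finite its image is closed; as $X$ is a variety, hence connected, $\tau$ must be surjective. Set $V := \tau^{-1}(U)$. Then $V$ is a non-empty open subset of the irreducible variety $X'$, hence itself irreducible, and the base change $\tau|_V \colon V \to U$ is again (finite) étale. By the assumption that $U$ is algebraically simply connected, $\tau|_V$ is an isomorphism.

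It remains to upgrade this to an isomorphism on all of $X$. Since $\tau$ is finite étale over the connected scheme $X$, the push-forward $\tau_*\sO_{X'}$ is a locally free $\sO_X$-algebra of constant rank equal to $\deg \tau$; the fact that $\tau|_V$ is an isomorphism on a dense open forces $\deg \tau = 1$, so the structural map $\sO_X \to \tau_*\sO_{X'}$ is an isomorphism of rank-$1$ locally free sheaves, and hence $\tau$ itself is an isomorphism. The only real subtlety is controlling $\tau$ over the complement $X\setminus U$, and this is handled by the constancy of the degree for finite étale morphisms over a connected base — which is why the finiteness/surjectivity discussion in the first step is essential.
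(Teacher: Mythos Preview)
Your proof is correct and follows essentially the same approach as the paper's: restrict the given cover to $U$, apply the hypothesis there, then conclude globally. The paper's own proof is terser --- it simply asserts that $c|_{U'}$ being an isomorphism ``implies that $c$ is an isomorphism'' --- whereas you spell out the reason via the constancy of the degree of a finite \'etale morphism over a connected base, and you also make explicit the implicit finiteness hypothesis in the paper's definition of ``algebraically simply connected.''
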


\begin{proof}
Let $c:X' \to X$ be an \'etale cover such that  $X'$ is irreducible. Let $U'=c^{-1}(U)$. Then  $U'$ is irreducible. Hence, $c|_{U'}$ is an isomorphism for $U$ is algebraically simply connected. This implies that $c$ is an isomorphism. Thus $X$ is algebraically simply connected.
\end{proof}

\begin{lemma}[{\cite[Lem. 3.3]{KM99}}]
\label{sim-con-mmp}
Let $X$ be a quasi-projective surface with  canonical singularities. Let $f:X\to Y$ a divisorial contraction in the MMP. Assume that the smooth locus of $X$ is algebraically simply connected. Then the smooth locus of $Y$ is algebraically simply connected.
\end{lemma}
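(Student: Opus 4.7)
\textit{Plan.} Let $c: W \to Y^{\mathrm{sm}}$ be a connected finite \'etale cover; the goal is to show $c$ is an isomorphism. Since $f$ contracts an irreducible curve $E$ to a point $y \in Y$ and is an isomorphism on $X \setminus E$, it restricts to an isomorphism $X^{\mathrm{sm}} \setminus E \xrightarrow{\sim} Y^{\mathrm{sm}} \setminus (\{y\} \cap Y^{\mathrm{sm}})$. Pull $c$ back through this identification to obtain an \'etale cover $W' \to X^{\mathrm{sm}} \setminus E$, and let $\bar c : \bar W \to X^{\mathrm{sm}}$ be the normalization of $X^{\mathrm{sm}}$ in the function field of $W'$. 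The strategy is to show $\bar c$ is \'etale on all of $X^{\mathrm{sm}}$. Granting this, $\bar c$ is a trivial cover by hypothesis; restricting to the dense open $V \subset Y^{\mathrm{sm}}$ over which $f$ is an isomorphism then trivializes $c|_V$, and the irreducibility of $W$ forces $c$ to be of degree one, hence an isomorphism.

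Since $X^{\mathrm{sm}}$ is smooth and $\bar c$ is already \'etale over $X^{\mathrm{sm}} \setminus E$, Zariski--Nagata purity of the branch locus reduces the \'etaleness of $\bar c$ to unramification at the generic point of $E \cap X^{\mathrm{sm}}$, which is a non-empty open subset of the smooth rational curve $E$, as the singular locus of $X$ is finite. When $y$ is a smooth point of $Y$, the contraction $f$ is locally the blow-up of $y$, so $E \subset X^{\mathrm{sm}}$ is a $(-1)$-curve and $f$ descends to a proper surjective morphism $X^{\mathrm{sm}} \to Y^{\mathrm{sm}}$; the fiber product $W \times_{Y^{\mathrm{sm}}} X^{\mathrm{sm}}$ is then already \'etale over $X^{\mathrm{sm}}$, realizing $\bar c$ directly, and the hypothesis gives the conclusion.

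The substantive case is when $y$ is a (Du Val) singular point of $Y$, so that $f^{-1}(Y^{\mathrm{sm}}) = X^{\mathrm{sm}} \setminus E$ and there is no a priori map $X^{\mathrm{sm}} \to Y^{\mathrm{sm}}$. The key task is to verify that the pulled-back cover $W'$ has no monodromy around the divisor $E \cap X^{\mathrm{sm}}$. The geometric input is that $E \cong \p^1$ is simply connected and is collapsed by $f$ entirely to the single point $y$: a small loop in $X^{\mathrm{sm}}$ linking $E$ transversally at a smooth point of $E$ should bound a disk that, after being deformed along the simply connected curve $E$, maps under $f$ to a null-homotopic loop in the analytic link of $y$ in $Y^{\mathrm{sm}}$. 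Formalizing this --- essentially a proper base-change assertion for \'etale fundamental groups along the contraction $f$ with simply connected exceptional fiber $E$, combined with the structure of canonical surface singularities --- is the principal obstacle; all other ingredients are purity and standard degree comparisons.
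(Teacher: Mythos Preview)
Your proposal misidentifies where the difficulty lies. The ``substantive case'' you isolate---when the image point $y=f(E)$ is a Du Val singularity of $Y$---does not arise: for a divisorial MMP contraction between surfaces with canonical singularities, $y$ is always a smooth point of $Y$. This is exactly the structural fact the paper imports from an outside reference, phrased there as ``$f^{-1}$ is an isomorphism around the singular points of $Y$.'' So the monodromy argument you flag as ``the principal obstacle'' is never needed, and the gap you leave open sits in a vacuous case.

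The paper exploits this fact differently and more cleanly than your cover-extension strategy. It passes to the minimal resolutions $r:\widetilde X\to X$ and $s:\widetilde Y\to Y$, so that $X^{\mathrm{sm}}\cong \widetilde X\setminus\operatorname{Exc}(r)$ and $Y^{\mathrm{sm}}\cong \widetilde Y\setminus\operatorname{Exc}(s)$, and the induced map $\tilde f:\widetilde X\to\widetilde Y$ is a composition of point blow-ups. Smoothness of $y$ yields $\tilde f^{-1}(\operatorname{Exc}(s))\subseteq\operatorname{Exc}(r)$, whence $\widetilde X\setminus\operatorname{Exc}(r)$ sits inside $\tilde f^{-1}(\widetilde Y\setminus\operatorname{Exc}(s))$. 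The preceding open-subset lemma makes the latter set algebraically simply connected, and since it maps to $\widetilde Y\setminus\operatorname{Exc}(s)$ by blow-ups of smooth points---which do not change the \'etale fundamental group---the conclusion follows.

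A secondary inaccuracy: even granting that $y$ is smooth, you assert that $E\subset X^{\mathrm{sm}}$ is a $(-1)$-curve and that $f$ is locally the blow-up of $y$. In general $E$ may pass through Du Val points of $X$; then $X^{\mathrm{sm}}\to Y^{\mathrm{sm}}$ is not proper and $f$ is not a single smooth blow-up. Your fibre-product argument can be repaired---pull the cover back to $f^{-1}(Y^{\mathrm{sm}})\supseteq X^{\mathrm{sm}}$, restrict, and compare degrees---but as written it is not correct.
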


\begin{proof}
Let $r:\widetilde{X} \to X$ and $s:\widetilde{Y} \to Y$ be minimal resolutions of $X$ and $Y$. Then there is a natural morphism $\tilde{f}:\widetilde{X} \to \widetilde{Y}$. Moreover, $\widetilde{X}$ can be obtained from $\widetilde{Y}$ by a sequence of blow-ups.

Let $E$ (\textit{resp.} $D$) be the exceptional set of $r$ (\textit{resp.} of $s$). Then $\widetilde{X}\backslash E$ is algebraically simply connected, and we only have to prove that $\widetilde{Y}\backslash D$ is algebraically simply connected. Since $f^{-1}$ is an isomorphism around the singular points of $Y$ (See \cite[Prop. 5.4]{Ou14a}), we have $\tilde{f}^{-1}(D)\subseteq E$. Hence, $\widetilde{X}\backslash E \subseteq \tilde{f}^{-1}(\widetilde{Y}\backslash D)$. By Lemma \ref{sim-con-open}, we obtain that $\tilde{f}^{-1}(\widetilde{Y}\backslash D)$ is algebraically simply connected. Note that $\tilde{f}^{-1}(\widetilde{Y}\backslash D)$ can be obtained from  $\widetilde{Y}\backslash D$ by a sequence of blow-ups. Hence, $\widetilde{Y}\backslash D$ is also algebraically simply connected.
\end{proof}

\subsection{Construction of an intermediate surface}

Consider a singular klt Fano surface $X$ whose smooth locus  is algebraically simply connected. In this subsection, we will construct a  surface $Z$ which is birational to $X$ and  with many nice properties. More precisely, we will prove the following proposition.

\begin{prop}
\label{aux-surface}
Let $X$ be a singular rational surface with klt singularities whose smooth locus is algebraically simply connected. Then there is a rational surface $Z$ with klt singularities such that
\begin{enumerate}
\item[(1)] There is a fibration $p:Z\to \p^1$ whose general fibers are smooth rational curves.
\item[(2)] There is a birational morphism $\pi:Z\to X$ such that every $\pi$-exceptional curve is horizontal over $\p^1$ with respect to $p$.
\item[(3)] The minimal resolution $\widetilde{X}\to X$ factors through $\widetilde{X} \to Z$.
\item[(4)] There is at most one  $\pi$-exceptional curve which is crepant over $X$, and if there is such a curve, then it is a section of $p$ over $\p^1$.
\end{enumerate}

\centerline{
\xymatrix{
\widetilde{X}   \ar[d]_{\mathrm{minimal \ resolution}} \ar[dr]^{\mathrm{minimal \ resolution}}
& 
\\
Z  \ar[r]^{\pi} \ar[d]_{p}
&X  
\\ 
\p^1 
& 
}
}

\end{prop}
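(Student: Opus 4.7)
My plan is to construct $Z$ as a controlled contraction of the minimal resolution $r : \widetilde X \to X$. Since $X$ is singular, there is at least one $r$-exceptional curve and $\rho(\widetilde X) \geqslant 2$, so the smooth projective rational surface $\widetilde X$ is not isomorphic to $\p^2$. It therefore admits a birational morphism onto some Hirzebruch surface $\mathbb{F}_n$; composing with the ruling $\mathbb{F}_n \to \p^1$ produces a fibration $\phi : \widetilde X \to \p^1$ whose general fiber is a smooth $\p^1$.

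Let $E_1,\dots,E_s$ denote the $r$-exceptional prime divisors, which are smooth rational curves since klt surface singularities are quotient singularities. Call each $E_i$ \emph{vertical} when it lies in a fiber of $\phi$ and \emph{horizontal} otherwise. The central step is to refine the choice of $\phi$ by elementary transformations of $\p^1$-fibered surfaces (each blows up a smooth point of a fiber and contracts a different $(-1)$-component of that fiber), so as to achieve the following: (a) for every cluster of $E_i$'s (i.e., every connected component of $\bigcup E_i$) the vertical part can be contracted to a klt point, and (b) among the horizontal $E_i$'s kept after this process at most one is crepant over $X$, and such a crepant $E_i$, when present, is a section of $\phi$. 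I expect (b) to be the main obstacle; verifying it will require a case analysis of the dual graphs of klt clusters and a systematic argument that elementary transformations can always reorganise fiber components so that any superfluous crepant $E_i$ is either moved into a section or absorbed into a vertical subconfiguration, all while preserving the morphism to $X$.

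Having fixed such a $\phi$, let $V \subseteq \widetilde X$ be the union of the $E_i$'s to be contracted after the adjustment. Each connected component of $V$ is a subconfiguration of a klt cluster, hence has negative-definite intersection form, so $V$ contracts to give a birational morphism $\rho : \widetilde X \to Z$ onto a normal projective surface; a standard discrepancy computation shows that $Z$ has klt singularities. Since $\rho$ contracts only $\phi$-vertical curves, $\phi$ descends through $\rho$ to a fibration $p : Z \to \p^1$ whose general fiber is still a smooth $\p^1$, yielding (1). The morphism $r$ factors as $\widetilde X \xrightarrow{\rho} Z \xrightarrow{\pi} X$, defining the birational morphism $\pi$ of (2)--(3); the $\pi$-exceptional curves of $Z$ are exactly the images of the horizontal $E_i$'s not in $V$, hence horizontal with respect to $p$ by construction. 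Finally, property (4) is precisely item (b) of the adjustment made to $\phi$, which completes the construction.
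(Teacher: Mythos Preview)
Your outline has a genuine gap at the central step. You acknowledge that item (b) --- arranging that at most one horizontal $E_i$ is crepant over $X$, and that it is a section --- is ``the main obstacle,'' and then describe in one sentence a programme of elementary transformations and case analysis of dual graphs without carrying any of it out. That is precisely the content of the proposition; everything else (contracting a negative-definite vertical subconfiguration to get klt points, descending the fibration) is routine. So the proposal does not yet contain a proof.

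There is also a structural problem with using elementary transformations. An elementary transformation of a $\p^1$-fibered surface blows up a point and contracts a different $(-1)$-curve, producing a new surface $\widetilde X'$ which is birational but not isomorphic to $\widetilde X$. Even if you only contract curves that are exceptional over $X$ (so that $\widetilde X'\to X$ still exists), you have destroyed the factorisation $\widetilde X\to Z$ required in (3): there is no morphism $\widetilde X\to\widetilde X'$. You must work with a fibration on $\widetilde X$ itself, not on an elementary transform of it. Related to this, note that you never invoke the hypothesis that the smooth locus of $X$ is algebraically simply connected; without it the bound on crepant horizontal divisors is not available in general.

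The paper proceeds quite differently and avoids these issues. It first takes the minimal partial resolution $h:X_1\to X$ of only the non-canonical points, so that $X_1$ has canonical singularities and no $h$-exceptional curve is crepant over $X$. The simple-connectedness hypothesis passes to $X_1$. Then a separate structural lemma (relying on the MMP and, in the Picard-number-one case, on the classification of Gorenstein log del Pezzo surfaces from Miyanishi--Zhang / Keel--McKernan) produces a surface $X_2\to X_1$ extracting at most one crepant curve, which is a section of a $\p^1$-fibration $p_2:X_2\to\p^1$. Finally one contracts from $X_2$ exactly those $\phi$-exceptional curves that are also $p_2$-vertical (this uses a contraction lemma for subconfigurations of klt exceptional sets). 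The upshot is that (4) is obtained not by rearranging a fibration on $\widetilde X$, but by separating the canonical and non-canonical loci and exploiting classification in the canonical case.
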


This surface $Z$ will be very useful for the proof of Theorem \ref{thm-rat-surface}.  For the construction, we will need two lemmas.

\begin{lemma}
\label{contract-sub-graph}
Let $Y$ be a klt quasi-projective surface. Let $E_i \subseteq Y$  $(i=1,...,r)$ be smooth projective irreducible $K_Y$-non-negative rational curves forming an snc divisor $\bigcup_{i=1}^r E_i$.  Assume that the intersection matrix $(E_i \cdot E_j)_{1\leqslant i,j \leqslant r}$ is  negative definite, and that there are rational numbers $-1<a_i \leqslant 0$ such that $(K_Y- \sum_{i=1}^r a_iE_i) \cdot E_j=0$ for every $j$. Let $\{C_k\}$ be any subset of $\{E_i\}$. Then there is a birational morphism $Y \to X$ contracting the $C_k$'s and no other curves. Moreover, $X$ has klt singularities.
\end{lemma}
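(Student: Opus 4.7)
The lemma splits into producing the contraction and showing that its target is klt; both reduce to linear algebra on the intersection matrices.

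First, the sub-matrix $(C_k\cdot C_l)$ is a principal submatrix of the negative-definite matrix $(E_i\cdot E_j)$ and is therefore itself negative definite. By Artin's contractibility criterion there exists a proper birational morphism $\pi : Y \to X$ to a normal algebraic space whose exceptional locus is exactly $\bigcup_k C_k$; Artin's theorem will upgrade $X$ to a scheme as soon as the singularities of $X$ are shown to be rational.

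The discrepancies $b_k$ of the $C_k$ over $X$ are defined by $(K_Y - \sum_k b_k C_k)\cdot C_l = 0$ for all $l$. Subtracting from the given identity $(K_Y - \sum_i a_i E_i)\cdot C_l = 0$ gives
\[
\sum_k (a_k - b_k)(C_k \cdot C_l) \;=\; -\sum_{E_i \notin \{C_k\}} a_i\,(E_i \cdot C_l) \qquad \text{for every } l.
\]
Since $\bigcup_i E_i$ is snc we have $E_i \cdot C_l \geqslant 0$ whenever $E_i \notin \{C_k\}$, which combined with $a_i \leqslant 0$ makes the right-hand side nonnegative. The matrix $-M := -(C_k \cdot C_l)$ is a symmetric positive definite matrix with nonpositive off-diagonal entries---a Stieltjes $M$-matrix---so $(-M)^{-1}$ has nonnegative entries, and inversion yields $a_k - b_k \leqslant 0$, i.e.\ $b_k \geqslant a_k > -1$.

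Thus every $\pi$-discrepancy lies in $(-1, 0]$---the upper bound $b_k \leqslant 0$ follows from the same $M$-matrix argument applied to $\sum_k b_k (C_k \cdot C_l) = K_Y \cdot C_l \geqslant 0$. Standard arguments now ensure that $X$ is klt: compose with a resolution of $Y$ to obtain a resolution of $X$, and verify that all exceptional discrepancies stay above $-1$. Because surface klt singularities are quotient and hence rational, Artin's theorem upgrades $X$ from an algebraic space to a scheme, completing the proof.

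The only delicate point is the linear-algebra bookkeeping in the discrepancy computation: the snc hypothesis on $\bigcup E_i$ is exactly what gives the correct sign on the off-diagonal contributions so that the $M$-matrix inversion produces $b_k \geqslant a_k$. Everything beyond this---Artin's contractibility criterion and the rationality of klt surface singularities---is entirely standard, so no deeper geometric input is needed.
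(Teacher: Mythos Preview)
Your discrepancy computation is exactly the paper's: the $M$-matrix inversion you describe is precisely \cite[Cor.~4.2]{KM98}, and the paper derives $-1<a_k\leqslant b_k\leqslant 0$ by the same two applications of that negativity lemma (once with $K_Y\cdot C_l\geqslant 0$ to get $b_k\leqslant 0$, once with $\sum_{E_i\notin\{C_k\}}a_iE_i\cdot C_l\leqslant 0$ to get $a_k\leqslant b_k$). So on the substantive point there is no difference.

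Where you diverge is in producing the contraction. The paper observes that $(Y,-\sum b_kC_k)$ is a klt pair with $(K_Y-\sum b_kC_k)\cdot C_l=0$ and invokes \cite[Prop.~4.10]{KM98}, which runs an MMP for a small perturbation of this pair and contracts exactly the $C_k$; this keeps everything quasi-projective by construction. You instead appeal to Artin's criterion to land in an algebraic space, and then use rationality of klt surface singularities to upgrade to a scheme. This is valid, but note two things: Artin's 1962 projectivity criterion is stated for smooth ambient surfaces, so strictly speaking you should pass to the minimal resolution $\widetilde Y\to Y$ and contract the union of the exceptional locus with the strict transforms of the $C_k$ (whose joint intersection matrix is again negative definite); and your ``compose with a resolution and check discrepancies stay above $-1$'' step is exactly the statement that $(Y,-\sum b_kC_k)$ is klt, which the paper just says directly. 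The MMP route is shorter and avoids the algebraic-space detour, but your approach is self-contained and does not rely on the log MMP machinery.
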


\begin{proof}
We may assume that $\{C_k\}$ is not empty. By renumbering if necessary, we may assume that $\{C_k\}=\{E_1,...,E_s\}$ with $1\leqslant s \leqslant r$. Note that the intersection matrix $(E_i\cdot E_j)_{1\leqslant i,j \leqslant s}$ is negative definite. Hence there are real numbers $b_i$  such that $(K_Y- \sum_{i=1}^{s} b_iE_i) \cdot E_j=0$ for all $1 \leqslant j \leqslant s$. Since $K_Y \cdot E_i$ is an integer for all  $i$ and $E_i\cdot E_j$ is integer for all $1\leqslant i,j \leqslant s$, the $b_i$'s are rational numbers. On the one hand, since $K_Y \cdot E_i \geqslant 0$ for all $i$, we have $b_i \leqslant 0$ for all $1 \leqslant i \leqslant s$ by \cite[Cor. 4.2]{KM98}. On the other hand, we have that $(K_Y- \sum_{i=1}^{s} a_iE_i) \cdot E_j= \sum_{i=s+1}^{r}a_iE_i \cdot E_j \geqslant 0$ for all $1 \leqslant j \leqslant s$. Hence by \cite[Cor. 4.2]{KM98}, we have $-1<a_i \leqslant b_i$ for all $1\leqslant i \leqslant s$. Note that the pair $(Y,-\sum_{i=1}^{s} b_iE_i)$ is klt. Hence by the same argument as in \cite[Prop. 4.10]{KM98}, there is a birational morphism $g:Y \to X$ contracting the $C_k$'s and no other curves. Moreover, $X$ has klt singularities since the pair $(Y,-\sum_{i=1}^{s} b_iE_i)$ is klt and $g^*K_X=K_Y-\sum_{i=1}^{s} b_iE_i$.
\end{proof}

\begin{rem}
\label{rem-contract-sub-graph}
One of the application of Lemma \ref{contract-sub-graph} is as follows. Let $g:Y\to Z$ be a partial minimal resolution of singularities of a klt surface $Z$. That is, if $z\in Z$ is a point such that $g^{-1}$ is not an isomorphism around $z$, then $z$ is singular, and $g$ is the minimal resolution of the singularity at $z$. Let $\{E_i\}$ be the set of the $g$-exceptional curves. Since $Z$ has klt singularities, the $E_i$'s satisfy the conditions in the lemma. Hence for any subset $\{C_k\}$ of $\{E_i\}$,  there is a birational morphism $Y \to X$ contracting exactly the $C_k$'s. The morphism $g:Y\to Z$ factors through $Y\to X$. Moreover, if $Z$ has canonical singularities, then so is $X$ (in this case $a_i=0$ for all $i$, and $b_j=0$ for all $j$ by the calculation in the proof of the lemma).
\end{rem}

In the following lemma, we prove Proposition \ref{aux-surface} under the assumption that $X$ has canonical singularities. For the proof, we use the classification  of Fano surfaces with canonical singularities which have Picard number $1$ (See \cite{MZ88} or \cite[\S 3]{KM99}).

\begin{lemma}
\label{extract-fib}
Let $X$ be a singular rational surface with canonical singularities. Assume  that the smooth locus of $X$ is algebraically simply connected. Let $r:\widetilde{X} \to X$ be the minimal resolution of singularities. Then there is a rational surface $X_1 \overset{g}{\longrightarrow} X$ with canonical singularities such that:
\begin{enumerate}
\item The minimal resolution $\widetilde{X} \to X$ factors through $\widetilde{X} \to  X_1$ which is also the minimal resolution of $X_1$.

\item The morphism $g$ contracts at most one curve.

\item There is a fibration $p_1:X_1 \to \p^1$ whose general fibers are smooth rational curves.

\item If $g$ contracts a curve $C_1$, then $C_1$  is a section of $p_1$ over $\p^1$, and the strict transform of $C_1$ in $\widetilde{X}$ is a $(-2)-$curve.
\end{enumerate}

\centerline{
\xymatrix{
\widetilde{X}   \ar[d]_{\mathrm{minimal \ resolution}} \ar[dr]^{\mathrm{minimal \ resolution}}
& 
\\
X_1  \ar[r]^g_{\mathrm{crepant}} \ar[d]_{p_1}
&X  
\\ 
\p^1 
& 
}
}

\end{lemma}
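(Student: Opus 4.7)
The plan is to produce a ruling on the minimal resolution $\widetilde X$ of $X$ and then descend it to an intermediate surface $X_1$ obtained by contracting an appropriate subset of the exceptional $(-2)$-curves.

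Since $X$ has canonical singularities, the exceptional divisors of $r:\widetilde X\to X$ form a disjoint union of smooth rational $(-2)$-curves $E_1,\ldots,E_r$ whose dual graph is a disjoint union of $ADE$ Dynkin diagrams. As $\widetilde X$ is a smooth projective rational surface, I would run a smooth-surface MMP (successive contractions of $(-1)$-curves) to get a birational morphism $h:\widetilde X\to \Sigma$ with $\Sigma$ either $\p^2$ or some Hirzebruch surface $\mathbb F_n$; by stopping one step before the contraction $\mathbb F_1\to \p^2$ if needed, we may always arrange $\Sigma=\mathbb F_n$ for some $n\ge 0$. Composing with the ruling $\rho:\mathbb F_n\to \p^1$ yields a fibration $q=\rho\circ h:\widetilde X\to \p^1$ with smooth rational general fibres.

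The crux is to choose $h$ so that no $E_i$ is contracted to a point by $h$, at most one $E_i$ is horizontal for $q$, and the unique horizontal $E_i$ (if any) is a section of $q$. Granting this, classify each $E_i$ as vertical or horizontal for $q$, and define $X_1$ to be the contraction of all the vertical $E_i$'s. By Lemma \ref{contract-sub-graph} and Remark \ref{rem-contract-sub-graph} this contraction exists, $X_1$ has canonical singularities, and $\widetilde X\to X_1$ is its minimal resolution. Since $q$ is constant on each vertical $E_i$, it descends through $\widetilde X\to X_1$ to a fibration $p_1:X_1\to \p^1$ with smooth rational general fibres. The image $C_1$ in $X_1$ of the horizontal $E_i$ (if any) is then a section of $p_1$, contracted by the induced morphism $g:X_1\to X$, and its strict transform in $\widetilde X$ is the original $(-2)$-curve $E_i$. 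All four conditions then follow.

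The main obstacle is establishing the required control over $h$. I would handle it by a case analysis on the $ADE$ type of the dual graph, exploiting the algebraic simple-connectedness of the smooth locus of $X$ (which restricts the allowed configurations, and is preserved by the steps of the MMP by Lemma \ref{sim-con-mmp}). For each allowed configuration, the successive $(-1)$-contractions making up $h$ are chosen to stay disjoint from a single designated ``section-candidate'' $E_i$, and so that each remaining $E_j$ winds up inside a fibre of $\rho$. A short intersection-number computation, using that the distinguished $E_i$ maps birationally to a section of $\rho$, yields the section property in condition (4).
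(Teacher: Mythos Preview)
Your overall strategy---produce a ruling on $\widetilde X$, then contract the vertical $(-2)$-curves---is sound, and the use of Lemma \ref{contract-sub-graph}/Remark \ref{rem-contract-sub-graph} to descend is exactly right. But the step you flag as the ``crux'' is a genuine gap, and your proposed fix (case analysis on $ADE$ type) does not obviously close it. The lemma does \emph{not} assume $\rho(X)=1$, so there is no a priori classification of ``allowed configurations'' to run through: $X$ could have several singular points, and the difficulty is finding a \emph{single} ruling on $\widetilde X$ for which all but at most one of the $E_i$ are vertical simultaneously. Knowing the local $ADE$ type at each point says nothing about how the various exceptional trees sit relative to a global ruling, and an arbitrary sequence of $(-1)$-contractions $\widetilde X\to\mathbb F_n$ can easily leave several $E_i$ horizontal (or contract some of them along the way). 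Your last paragraph asserts the contractions can be ``chosen'' appropriately, but gives no mechanism for doing so.

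The paper avoids this by running the MMP on $X$ itself rather than on $\widetilde X$. This reduces to a surface $X'$ with canonical singularities, algebraically simply connected smooth locus (Lemma \ref{sim-con-mmp}), and either $\rho(X')\ge 2$ (in which case a Mori fibration $X'\to\p^1$ already exists and one takes $g=\mathrm{id}$) or $\rho(X')=1$. In the latter case one invokes the classification: if $X'$ is singular, \cite[Lem.~3.8]{KM99} produces the desired $X'_1\to X'$ with a single crepant section extracted; if $X'\cong\p^2$, one looks at the last MMP step $Y\to\p^2$, observes it contracts to a single point $x'$, and blows up $x'$ to get a Hirzebruch surface---then at most one exceptional curve of $\widetilde Y\to Y$ can be horizontal since they all lie over $x'$. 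In each case the construction is then lifted back to $X$. The point is that reducing to $\rho=1$ first is what makes the ``at most one horizontal exceptional curve'' claim tractable; your approach skips this reduction and so has to confront the global problem directly.
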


\begin{proof}
Let $f:X\to X'$ be the result of a MMP for $X$. Then $X'$ has   canonical singularities, and the smooth locus of $X'$ is algebraically simply connected by Lemma \ref{sim-con-mmp}. If $X'$ has Picard number at least $2$, then there is a Mori fibration $X'\to \p^1$ which induces a fibration $X\to \p^1$. In this case, we let $g:X_1 \to X$ be the identity map. In the following, we will assume that $X'$ has Picard number $1$.

If $X'$ is not smooth, then by \cite[Lem. 3.8]{KM99}, there is a rational surface $X'_1 \overset{g'}{\longrightarrow} X'$ with canonical singularities such that

\begin{enumerate}

\item The minimal resolution $\widetilde{X}' \to X'$ factors through $\widetilde{X}' \to X_1'$ which is also the minimal resolution of $X_1'$.

\item The morphism $g'$ contracts exactly one curve $C'_1$.

\item There is a fibration $p_1':X_1 '\to \p^1$ whose general fibers are smooth rational curves.

\item The curve $C'_1$ is a section of $p_1'$ over $\p^1$, and the strict transform of $C'_1$ in $\widetilde{X}'$ is a $(-2)-$curve.

\end{enumerate}

There is a natural morphism $\widetilde{X} \to \widetilde{X}'$. Contract all exceptional curves of $\widetilde{X} \to X$, except maybe the strict transform of $C'_1$ in $\widetilde{X}$, we obtain  birational morphisms $\widetilde{X} \longrightarrow X_1 \overset{g}{\longrightarrow} X$ (See Remark \ref{rem-contract-sub-graph}). Then $X_1$ has   canonical singularities, and there is a natural morphism $f_1:X_1 \to X_1'$. The composition $p_1=p_1'\circ f_1$ gives a fibration from $X_1$ to $\p^1$. Hence $X_1$ satisfies the  conditions in the lemma.

If $X'$ is smooth, then $X' \cong \p^2$. Let $\theta :Y \to X'$ be the last step of the MMP for $X$. Then there is a natural morphism $\phi:X \to Y$. Let $s:\widetilde{Y} \to Y$ be the minimal resolution of $Y$. Then $\widetilde{Y}$ can be obtained from $X'$ (which is isomorphic to $\p^2$) by a sequence of blow-ups. Since $X$ is not smooth, $X$ is different from $X'$. Hence $Y\neq X'$, and $\widetilde{Y} \neq X'$. Moreover, since $\theta^{-1}: X' \dashrightarrow Y$ is an isomorphism outside exactly one point $x'$ of $X'$, every exceptional divisor of 
$\widetilde{Y} \to X'$ is over the point $x'$. Blow up the point $x'$ in $X'$, we obtain a surface $Z$. Then $\widetilde{Y} \to X'$ factorise through $\widetilde{Y} \to Z$, and there is a fibration from $Z$ to $\p^1$. Hence we obtain a fibration $\widetilde{Y} \to \p^1$ such that there is at most one curve in $\widetilde{Y}$ which is both exceptional for $s:\widetilde{Y} \to Y$ and horizontal over $\p^1$. Moreover, if this curve exists, then it is a section over $\p^1$ (it can only be the strict transform of the exceptional divisor of $Z\to X'$). Contract all of the $s$-exceptional curves in $\widetilde{Y}$ which are not horizontal over $\p^1$ (See Remark \ref{rem-contract-sub-graph}), we get a rational surface $Y_1\overset{h}{\longrightarrow} Y$ which has at most canonical singularities such that 

\begin{enumerate}

\item The minimal resolution $\widetilde{Y} \to Y$ factors through $\widetilde{Y} \to  Y_1$ which is also the minimal resolution of $Y_1$.

\item The morphism $h$ contracts at most one curve.

\item There is a fibration $q_1:Y_1 \to \p^1$ whose general fibers are smooth rational curves.

\item If $h$ contracts a curve $B_1$, then $B_1$  is a section of $q_1$ over $\p^1$, and the strict transform of $B_1$ in $\widetilde{Y}$ is a $(-2)-$curve.

\end{enumerate}

There is a natural morphism $\widetilde{X} \to \widetilde{Y}$. By contracting all of the exceptional curves of $\widetilde{X} \to X$, except maybe the strict transform of $B_1$ in $\widetilde{X}$, we obtain birational morphisms $\widetilde{X} \longrightarrow X_1 \overset{g}{\longrightarrow} X$ (See Remark \ref{rem-contract-sub-graph}). Then $X_1$ has   canonical singularities, and there is a natural morphism $\phi_1:X_1 \to Y_1$. The composition $p_1=q_1\circ \phi_1$ gives a fibration from $X_1$ to $\p^1$. Hence $X_1$ satisfies the conditions in the lemma.
\end{proof}

Now we can prove Proposition \ref{aux-surface}.

\begin{proof}[{Proof of Proposition \ref{aux-surface}}]
Since $X$ has isolated singularities, there is a birational morphism $h: X_1\to X$ which resolves exactly the non-canonical singularities of $X$. We may also assume that this partial resolution is minimal. Then $X_1$ has at most canonical singularities, and none of the $h$-exceptional curves is crepant over $X$ (See \cite[Cor. 4.3]{KM98}). Moreover, if $\widetilde{X} \to X$ is the minimal resolution, then there is a natural morphism $\widetilde{X} \to X_1$ which is also the minimal resolution of $X_1$. Since the smooth locus of $X$, which is algebraically simply connected, is isomorphic to an open subset of the smooth locus of $X_1$, the smooth locus of $X_1$ is algebraically simply connected by Lemma \ref{sim-con-open}. 

First we assume that $X_1$ is singular. Then by Lemma \ref{extract-fib}, there is a rational surface $X_2\overset{g}{\longrightarrow} X_1$ with canonical singularities such that:
\begin{enumerate}

\item The minimal resolution $\widetilde{X} \to X_1$ factors through $\widetilde{X} \to  X_2$ which is also the minimal resolution of $X_2$.

\item The morphism $g$ contracts at most one curve.

\item There is a fibration $p_2:X_2 \to \p^1$ whose general fibers are smooth rational curves.

\item If $g$ contracts a curve $C_1$, then $C_1$  is a section of $p_2$ over $\p^1$, and the strict transform of $C_1$ in $\widetilde{X}$ is a $(-2)-$curve.

\end{enumerate}
Let $\phi:X_2 \to X$ be the composition of $X_2\overset{g}{\longrightarrow} X_1 \overset{h}{\longrightarrow} X$. Since $g$ contracts at most one curve, and none of the $h$-exceptional curves is crepant over $X$, there is at most one curve in $X_2$ which is  crepant over $X$. By Remark \ref{rem-contract-sub-graph}, we can contract all curves in $X_2$ which are both contracted by $\phi$ and by $p_2$. We obtain a normal surface $Z$ such that $\widetilde{X}\to Z$ is the minimal resolution. Then $Z$ has klt singularities. There is a natural fibration $p:Z\to \p^1$ induced by $p_2$. Moreover there is a natural birational morphism $\pi:Z \to X$ induced by $\phi$.

\centerline{
\xymatrix{
\widetilde{X}   \ar[d]_{\mathrm{minimal \ resolution}} \ar[dr]^{\mathrm{minimal \ resolution}}
& 
\\
X_2 \ar[d] \ar@/_2pc/[dd]_{p_2} \ar[r]^g_{\mathrm{crepant}} & X_1 \ar[d]_h^{\mathrm{partial\ resolution}}
\\
Z  \ar[r]^{\pi} \ar[d]^{p}
&X  
\\ 
\p^1 
& 
}
}

Then there is at most one $\pi$-exceptional curve which is crepant over $X$ since there is at most one curve in $X_2$ which is  crepant over $X$. If this curve exists, then it is the strict transform of $C_1$ in $Z$. Hence it is a section over $\p^1$.

Now we assume that $X_1$ is a smooth rational surface. Then $X_1$ is different from $\p^2$ since $X$ is singular. Hence, there is a fibration $p_1:X_1\to \p^1$. Let $g:X_2\to X_1$ be the identity morphism and let $p_2=p_1\circ g:X_2\to \p^1$. Let $\phi=h\circ g: X_2\to X$. We can construct $\pi:Z \to X$ as before such that $Z$ satisfies all the conditions in the proposition. Moreover, in this case,  none of the $\pi$-exceptional curves is crepant over $X$.
\end{proof}

\subsection{$\p^1$-fibration over a curve}

In this subsection, we will give some properties of fibered surfaces $X\to B$ such that $B$ is a smooth curve, and general fibers of the fibration are smooth rational curves. The fibration $X\to B$ is called a $\p^1$-bundle if it is smooth.

\begin{lemma}
\label{family-smooth-curves}
Let $p:X \to B$ be a projective fibration from a normal quasi-projective surface to a smooth curve. Assume that every fiber of $p$ is reduced and irreducible, and there is a smooth fiber isomorphic to $\p^1$. Then $p$ is a  $\p^1$-bundle.
\end{lemma}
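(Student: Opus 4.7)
The plan is to reduce the statement to two facts about $p$: that it is flat, and that every scheme-theoretic fiber is smooth of dimension one. Once both are established, $p$ is a proper smooth morphism, which by the definition preceding the lemma means exactly that $p$ is a $\p^1$-bundle.

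For flatness, I would first observe that $X$, being a normal surface, is Cohen--Macaulay by Serre's criterion $S_2$. The base $B$ is a smooth curve, hence regular of dimension one, and by hypothesis every fiber of $p$ has pure dimension one, equal to $\dim X - \dim B$. Miracle flatness then gives that $p$ is flat. (Alternatively, locally on $B$ a coherent sheaf over a Dedekind scheme is flat iff it is torsion-free; this applies here since $X$ is integral and no fiber component is isolated.)

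With $p$ flat and proper, the Euler characteristic $\chi(\sO_{X_b})$ is locally constant, hence constant on the connected base $B$. The existence of a smooth fiber isomorphic to $\p^1$ forces $\chi(\sO_{X_b})=1$ for every $b\in B$. Now fix any $b$ and let $\nu:\widetilde{X}_b\to X_b$ denote the normalization; since $X_b$ is reduced and irreducible, $\widetilde{X}_b$ is a smooth irreducible projective curve. From the exact sequence $0\to \sO_{X_b}\to \nu_*\sO_{\widetilde{X}_b}\to \sQ \to 0$, with $\sQ$ a skyscraper sheaf supported on the singular locus of $X_b$, taking Euler characteristics gives $\chi(\sO_{\widetilde{X}_b})=1+h^0(\sQ)\geqslant 1$. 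But $\chi(\sO_{\widetilde{X}_b})=1-g(\widetilde{X}_b)\leqslant 1$, so $h^0(\sQ)=0$ and $g(\widetilde{X}_b)=0$. Hence $X_b=\widetilde{X}_b\cong \p^1$, so every fiber of $p$ is smooth.

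Finally, I would invoke the standard criterion that a flat morphism of finite type whose geometric fibers are smooth is itself smooth (EGA IV, 17.5.1); applied to $p$ over the algebraically closed base field $\c$ this yields the conclusion. The step requiring the most care is the flatness argument, but given the Cohen--Macaulay nature of normal surfaces and the smooth one-dimensional base it is essentially automatic; the remainder is just the classical observation that an irreducible reduced projective curve of arithmetic genus zero must be $\p^1$.
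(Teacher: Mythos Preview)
Your proof is correct and follows essentially the same path as the paper's: establish flatness over the smooth curve base, use constancy of the arithmetic genus (equivalently, of $\chi(\sO_{X_b})$) to force every fiber to have arithmetic genus zero, deduce that each reduced irreducible fiber is $\p^1$, and conclude smoothness of $p$. The only difference is presentational: the paper invokes flatness in one line (``since $B$ is a smooth curve'') and asserts directly that a reduced irreducible curve of arithmetic genus zero is $\p^1$, whereas you justify both points more explicitly via miracle flatness and the normalization exact sequence.
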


\begin{proof}
Since $B$ is a smooth curve, $p$ is a flat morphism. Hence all fibers of $p$ have the same arithmetic genus by \cite[Cor. III.9.10]{Har77}. By assumption, there is a point $b$ in $B$ such that the fiber of $p$ over $b$ is a smooth rational curve. Since every fiber of $p$ is reduced and irreducible, we obtain that every fiber of $p$ is a curve with arithmetic genus $0$. Thus, every fiber of $p$ is isomorphic to $\p^1$. Since $p$ is flat, and every fiber of $p$ is smooth, $p$ is a smooth morphism. Hence  $p$ is a  $\p^1$-bundle.
\end{proof}

Recall that a Hirzebruch surface $\Sigma_k$ is a smooth rational ruled surface isomorphic to $\p_{\p^1}(\sO_{\p^1}\oplus \sO_{\p^1}(-k))$. In particular, if $k>0$, then there is a unique rational curve $C$ in $\Sigma_k$ such that $C^2=-k$ (See \cite[\S V.2]{Har77}).

\begin{lemma}
\label{can-degree-formula}
Let $\widetilde{X}=\Sigma_k$ be a Hirzebruch surface with $k>0$, and let $C$ be the unique rational curve  in $\widetilde{X}$ such that $C\cdot C=-k$. Let $p:\widetilde{X} \to \p^1$ be the $\p^1$-bundle, and let $F$ be a fiber of $p$.  Let $r:\widetilde{X} \to X$ be the morphism contracting $C$. Then $K_X \cdot K_X=8+\frac{(k-2)^2}{k}$, $-K_X\cdot \pi_*F =1+\frac{2}{k}$, and $\pi_*F \cdot \pi_*F= \frac{1}{k}$. In particular, $-K_X$ cannot be a minimal curve.
\end{lemma}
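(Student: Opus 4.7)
The plan is to transfer every intersection calculation on $X$ to the smooth surface $\widetilde X=\Sigma_k$ via the contraction $r$. First I would record the standard numerical data on $\Sigma_k$: the fiber and section satisfy $F^2=0$, $C^2=-k$, $C\cdot F=1$; the canonical class satisfies $K_{\widetilde X}^2=8$ and $K_{\widetilde X}\cdot F=-2$; and since $C$ is a smooth rational curve, adjunction gives $K_{\widetilde X}\cdot C=-2-C^2=k-2$.

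Next, since $C$ is the only $r$-exceptional curve, I would write $K_{\widetilde X}=r^*K_X+aC$. The projection formula gives $r^*K_X\cdot C=K_X\cdot r_*C=0$, so intersecting the displayed equation with $C$ yields $k-2=-ak$, i.e.\ $a=(2-k)/k$. Using that $r^*$ preserves the self-intersection of pulled-back classes, one then obtains
\[K_X^2=(r^*K_X)^2=K_{\widetilde X}^2-a^2C^2=8+ka^2=8+\frac{(k-2)^2}{k}.\]

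Because contracting a smooth rational $(-k)$-curve on a smooth surface produces a cyclic quotient singularity, $X$ is $\mathbb{Q}$-factorial, so $r_*F$ is $\mathbb{Q}$-Cartier. I would write $r^*(r_*F)=F+bC$ and solve $0=r^*(r_*F)\cdot C=F\cdot C+bC^2=1-bk$ to get $b=1/k$. Two applications of the projection formula then give
\[r_*F\cdot r_*F=r^*(r_*F)\cdot F=\left(F+\tfrac{1}{k}C\right)\cdot F=\tfrac{1}{k},\]
\[-K_X\cdot r_*F=-r^*K_X\cdot F=-(K_{\widetilde X}-aC)\cdot F=2+a=1+\tfrac{2}{k}.\]

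For the final assertion, if $-K_X$ were numerically equivalent to a minimal curve $T$, then every curve $D$ on $X$ would satisfy $-K_X\cdot D\geq -K_X\cdot T=K_X^2$. But taking $D=r_*F$ one has $1+2/k\leq 3<8\leq 8+(k-2)^2/k=K_X^2$ for every $k>0$, a contradiction. I do not anticipate any real obstacle: the calculation is routine intersection-number bookkeeping, and the only mild subtlety is noting that $X$ is $\mathbb{Q}$-factorial so that the pullback decompositions $K_{\widetilde X}=r^*K_X+aC$ and $r^*(r_*F)=F+bC$, and hence the projection formula, are legitimate.
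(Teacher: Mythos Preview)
Your proof is correct and follows essentially the same approach as the paper's: compute the discrepancy $a$ of $C$ from $K_{\widetilde X}=r^*K_X+aC$, then transfer all intersection numbers from $X$ to $\widetilde X$ via the projection formula. The paper's proof is extremely terse (it just asserts $-K_{\widetilde X}=-r^*K_X+\frac{k-2}{k}C$, $K_{\widetilde X}^2=8$, $C\cdot F=1$ and reads off the conclusions), so your version actually fills in all the steps the paper omits, including an explicit argument for the ``in particular'' clause, which the paper leaves implicit.
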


\begin{proof}
We have $-K_{\widetilde{X}}=-\pi^*K_X+\frac{k-2}{k}C$, $K_{\widetilde{X}} \cdot K_{\widetilde{X}}=8$, and $C \cdot F =1$. Hence $K_X \cdot K_X=8+\frac{(k-2)^2}{k}$, and $-K_X\cdot \pi_*F =1+\frac{2}{k}$. We also obtain that $\pi_*F \cdot \pi_*F= \frac{1}{k}$.
\end{proof}

In the remaining of this subsection, our aim is to prove the following proposition.

\begin{prop}
\label{double-fibre-canonical}
Let $p:X\to B$ be Mori fibration from a klt quasi-projective  surface to a smooth curve. Let $b$ be a point in $B$, and let $C=\mathrm{Supp}(p^*b)$. Assume that $p^*b=2C$. Then $X$ has canonical singularities along $p^*b$. More precisely, we have exactly two possibilities
\begin{enumerate}
\item  there are two singular point on $C$, and both of them are of type $A_1$;
\item   there is one singular point on $C$, and the singularity is of type $D_i$ for some $i\geqslant 3$.
\end{enumerate}
\end{prop}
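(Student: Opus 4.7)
My approach is to pass to the minimal resolution and combine the Mori-fibration constraints with the structure of a $\p^1$-fibration on a smooth surface. Let $r \colon \widetilde{X} \to X$ be the minimal resolution, $\widetilde{p} = p \circ r \colon \widetilde{X} \to B$, write $\widetilde{C}$ for the strict transform of $C$, and $E_1, \ldots, E_k$ for the $r$-exceptional curves lying over $C$. All components of $\widetilde{p}^{\,*}b$ are smooth rational curves, since $\widetilde{X}$ is smooth with general fiber $\p^1$. Writing $r^*C = \widetilde{C} + \sum_i b_i E_i$, with $b_i \in \mathbb{Q}$ determined by $r^*C \cdot E_j = 0$ (which follows from $r^*C \cdot E_j = C \cdot r_*E_j = 0$), one obtains
$$
\widetilde{p}^{\,*}b \;=\; 2\widetilde{C} + \sum_i m_i E_i, \qquad m_i = 2 b_i \in \mathbb{Z}_{>0}.
$$
The Mori condition gives $-K_X \cdot F_{\mathrm{gen}} = 2$ on a general $\p^1$-fiber, hence $-K_X \cdot C = 1$.

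The first step is to prove simultaneously that $X$ is canonical along $C$ and that $\widetilde{C}$ is a $(-1)$-curve. The fiber $\widetilde{p}^{\,*}b$ has arithmetic genus zero, so adjunction yields $K_{\widetilde{X}} \cdot \widetilde{p}^{\,*}b = -2$, which expanded reads
$$
2(-2 - \widetilde{C}^{\,2}) + \sum_i m_i(-2 - E_i^{\,2}) \;=\; -2.
$$
Each $-2 - E_i^{\,2}$ is $\ge 0$ on the klt minimal resolution, the $m_i$ are positive, and $\widetilde{C}^{\,2} \le -1$ by Zariski's lemma (the fiber must be reducible, otherwise $F = 2\widetilde{C}$ forces $g(\widetilde{C}) = 1/2$). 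These inequalities force equalities on both sides, whence $\widetilde{C}^{\,2} = -1$ and $E_i^{\,2} = -2$ for every $i$. Thus every singular point of $X$ on $C$ is Du Val, and $\widetilde{C}$ is a $(-1)$-curve.

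Next I restrict the intersection pattern. Castelnuovo-contract $\widetilde{C}$ by $\sigma \colon \widetilde{X} \to Y$; the image of each $E_i$ satisfies $(\sigma_* E_i)^{\,2} = -2 + (\widetilde{C} \cdot E_i)^{\,2}$. As $\sigma_* E_i$ still lies in a fiber of the induced $\p^1$-fibration $Y \to B$, its self-intersection is $\le 0$, forcing $\widetilde{C} \cdot E_i \in \{0,1\}$. Combined with $\widetilde{p}^{\,*}b \cdot \widetilde{C} = 0$, i.e.\ $\sum_i m_i (\widetilde{C} \cdot E_i) = 2$, we see that either exactly two of the $E_i$ meet $\widetilde{C}$, each with $m_i = 1$, or exactly one $E_i$ meets $\widetilde{C}$ with $m_i = 2$.

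The final step is a case-by-case combinatorial enumeration: for each ADE graph of $(-2)$-curves in the minimal resolution, and each vertex $v$ where $\widetilde{C}$ may meet the graph, solve the linear system $r^*C \cdot E_j = 0$, and determine when the resulting $m_v$ equals $1$ or $2$ with all $m_j$ integral. A direct computation shows that $m_v = 1$ occurs only at the isolated vertex of $A_1$, and $m_v = 2$ occurs only at the middle vertex of $A_3$ or at the long-chain endpoint of $D_n$ with $n \ge 4$. Every $A_n$ with $n \notin \{1,3\}$, every $E_6, E_7, E_8$, and the remaining vertices of $D_n$ give either fractional multiplicities or the wrong value of $m_v$. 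Adopting the convention $D_3 = A_3$, this yields exactly the two possibilities stated. The main obstacle is simply the thoroughness of this last classification; it is finite but requires careful linear algebra on each ADE diagram and enumeration of all possible intersection points of $\widetilde{C}$.
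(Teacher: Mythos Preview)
Your approach is correct and follows a genuinely different route from the paper's. The paper never appeals to the ADE classification: after observing that $\widetilde C$ is the unique $(-1)$-curve in $\widetilde p^{\,*}b$, it bounds the number of singular points on $C$ by two via a contraction argument, settles the two-point case by an ad hoc computation showing both points are $A_1$, and in the one-point case peels off exceptional $(-2)$-curves from $\widetilde C$ one at a time, exhibiting the $D_n$ chain constructively. By contrast, you front-load the hard step: the adjunction identity $2(-2-\widetilde C^{\,2})+\sum_i m_i(-2-E_i^{\,2})=-2$ forces $\widetilde C^{\,2}=-1$ and all $E_i^{\,2}=-2$ in one stroke, which is cleaner than the paper's piecemeal verification that each peeled-off curve is a $(-2)$-curve. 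The cost is that your endgame is an enumeration over ADE diagrams that you only sketch, whereas the paper's inductive peeling is self-contained and uses nothing about the ADE list.

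One point to tighten when you carry out that enumeration. Your dichotomy (two $E_i$ meet $\widetilde C$ with $m_i=1$, or one $E_i$ with $m_i=2$) does not by itself separate ``two singular points'' from ``one singular point whose exceptional set meets $\widetilde C$ at two distinct curves''. The second possibility would put a cycle in the dual graph of $\widetilde p^{\,*}b$, which is impossible because the fiber of a $\p^1$-fibration on a smooth surface is obtained from a single $\p^1$ by iterated blow-ups and hence has tree dual graph (the paper records this separately). You should invoke this explicitly: your enumeration is phrased over a \emph{single} attachment vertex per ADE component, so without the tree constraint that sub-case of (a) is left unaddressed.
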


We will first prove several lemmas.

\begin{lemma}
\label{snc-fiber-support}
Let $p:X\to B$ be a projective fibration from a smooth quasi-projective surface to a smooth curve $B$ such that general fibers of $p$ are smooth rational curves. Let $b$ be a point in $B$. Then the divisor $p^*b$ has snc support. Moreover, the dual graph of the support of $p^*b$ is a tree.
\end{lemma}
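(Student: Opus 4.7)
The plan is to deduce both assertions from a single computation of arithmetic genera. Set $F := p^*b$ and write $F_{\mathrm{red}} = C_1 + \cdots + C_k$ for its reduced structure. Since $B$ is a smooth curve, $p$ is flat, so $\chi(\sO_{F'})$ is locally constant on $B$; for a general fiber this equals $\chi(\sO_{\p^1}) = 1$, and hence $\chi(\sO_F) = 1$. Combined with $h^0(\sO_F) = 1$ (fibers of a fibration are connected in the paper's conventions), this yields $p_a(F) = h^1(\sO_F) = 0$. To transfer this to $F_{\mathrm{red}}$, I use the short exact sequence
\[
0 \to \sI_{F_{\mathrm{red}}}/\sI_F \to \sO_F \to \sO_{F_{\mathrm{red}}} \to 0;
\]
since $F$ is one-dimensional and projective, $H^2$ of any coherent sheaf on it vanishes, and the long exact cohomology sequence together with $h^0(\sO_{F_{\mathrm{red}}}) = 1$ and $h^1(\sO_F) = 0$ forces $h^1(\sO_{F_{\mathrm{red}}}) = 0$, so $p_a(F_{\mathrm{red}}) = 0$.

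Next, since $X$ is smooth, adjunction on $X$ gives the standard identity $p_a(D_1 + D_2) = p_a(D_1) + p_a(D_2) + D_1 \cdot D_2 - 1$ for effective divisors with no common components. Iterating yields
\[
0 = p_a(F_{\mathrm{red}}) = \sum_{i=1}^{k} p_a(C_i) + \sum_{i<j} C_i \cdot C_j - (k-1).
\]
Each $p_a(C_i) \geqslant 0$, each $C_i \cdot C_j \geqslant 0$, and since $F_{\mathrm{red}}$ is connected the intersection multigraph on $k$ vertices must carry at least $k-1$ edges, so $\sum_{i<j} C_i \cdot C_j \geqslant k-1$. Equality throughout therefore forces $p_a(C_i) = 0$ for every $i$ (so each $C_i$ is a smooth rational curve) and makes the multigraph a spanning tree with exactly $k-1$ edges.

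Finally, the tree condition rules out double edges, so $C_i \cdot C_j \in \{0, 1\}$, meaning any two distinct components either are disjoint or meet at a single transverse point. It also rules out three components passing through a common point, since in that case each of the three pairs would contribute an edge and create a triangle. Combined with the smoothness of each $C_i$, this shows that $\mathrm{Supp}(p^*b) = F_{\mathrm{red}}$ has simple normal crossings and that its dual graph is a tree. I do not expect a real obstacle in carrying this out; the only item requiring some care is the ideal-sheaf bookkeeping used to pass from $p_a(F) = 0$ to $p_a(F_{\mathrm{red}}) = 0$.
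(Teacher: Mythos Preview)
Your argument is correct and takes a genuinely different route from the paper. The paper runs a relative MMP over $B$: the end result $X'$ is smooth with a Mori fibration $p':X'\to B$, hence a $\p^1$-bundle, so $p'^*b$ is a single smooth $\p^1$; since $X$ is then recovered from $X'$ by a sequence of point blow-ups, the fiber $p^*b$ automatically has snc support with tree-shaped dual graph. Your approach is instead purely numerical: you pin down $p_a(F_{\mathrm{red}})=0$ and unwind the iterated adjunction identity to force every component to be a smooth rational curve and the intersection multigraph to be a spanning tree on $k$ vertices with exactly $k-1$ edges. The paper's proof is shorter and explains structurally \emph{why} the fiber looks as it does (it is literally built from a single $\p^1$ by blow-ups), while yours is self-contained and avoids invoking the MMP altogether.

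One point deserves tightening. Connectedness of $F$ alone does not give $h^0(\sO_F)=1$ when $F$ is non-reduced, so your deduction of $h^1(\sO_F)=0$ from $\chi(\sO_F)=1$ is not yet justified. The claim is nonetheless true here: since $p$ is flat projective of relative dimension one from a smooth surface, relative duality gives $R^1p_*\sO_X \cong \sH om_{\sO_B}(p_*\omega_{X/B},\sO_B)$, a locally free sheaf on $B$ whose generic rank is $h^0(\p^1,\omega_{\p^1})=0$; hence $R^1p_*\sO_X=0$, and base change in top degree yields $h^1(\sO_{F_b})=0$ for every $b$. With this in hand, your ideal-sheaf step and the remainder of the argument go through unchanged.
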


\begin{proof}
Let $X\to X'$ be the result of a $p$-relative MMP. Then $X'$ is smooth, and there is a natural morphism $p':X\to B$. Since general fibers of $p$ are rational curves, $p'$ is a Mori fibration. Hence $p'^*b$ is a prime divisor since $X'$ is smooth (See \cite[Lem. 3.4]{KM99}). Since $X$ can be obtained by a sequence of blow-ups from $X'$, the divisor $p^*b$ has snc support, and the dual graph of the support of $p^*b$ is a tree.
\end{proof}

\begin{lemma}
\label{exactly-one--1-curve}
Let $p:X\to B$ be a projective fibration from a smooth quasi-projective surface to a smooth curve $B$ such that general fibers of $p$ are smooth rational curves. Let $b$ be a point in $B$. Let $(p^*b)_{\mathrm{red}}$ be the sum of the components of $p^*b$.  Assume that there is exactly one $(-1)$-curve $C$ in $p^*b$, then the following properties hold.
\begin{enumerate}
\item[(1)] One of the curves in $p^*b$ which meet $C$ is a $(-2)$-curve.
\item[(2)] Assume that there is a chain of rational curves $E=\sum_{i=1}^{s} C_i$ in the fiber $p^*b$ such that the dual graph of $C+( \sum_{i=1}^{s} C_i)$ is as follows. 

\centerline{
\xymatrix{
\overset{C}{\bullet} \ar@{-}[r] &\overset{C_1}{\bullet} \ar@{-}[r]  &\overset{C_2}{\bullet} \ \ \cdots \ \ \overset{C_s}{\bullet}
}}
Assume further that $E+C$ and $((p^*b)_{\mathrm{red}}-E-C)$ intersect only along $C_s$. Then all of the $C_i$'s are $(-2)$-curves.

\item[(3)] The multiplicity of $C$ in $p^*b$ is larger than one.
\end{enumerate}
\end{lemma}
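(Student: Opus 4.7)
The plan is to prove all three parts by contracting the unique $(-1)$-curve $C$ via $\sigma:X\to Y$, analyzing the new fibration $q:Y\to B$ (which still satisfies the hypotheses of Lemma \ref{snc-fiber-support}), and iterating this contraction when needed. The two basic tools I will use repeatedly are the fiber equation $0 = p^*b\cdot C_i = m_i C_i^2 + \sum_{C_j \sim C_i} m_j$ (where neighbors intersect transversely at one point by the snc/tree property of Lemma \ref{snc-fiber-support}, so $C_j\cdot C_i = 1$), and the standard observation that a reducible fiber of such a ruled fibration must contain a $(-1)$-curve: indeed $K_Y\cdot q^*b = -2$ and adjunction give $K_Y\cdot D_i = -2-D_i^2$ for each (smooth rational) component, so if all $D_i^2\le -2$ the sum $\sum n_i(-2-D_i^2) \ge 0 > -2$, a contradiction.

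For (1), I would argue by contradiction. If no neighbor of $C$ were a $(-2)$-curve, then by uniqueness of $C$ every neighbor $C_j$ would satisfy $C_j^2 \le -3$, and its strict transform in $Y$ would have self-intersection $C_j^2+1 \le -2$; components of $p^*b$ not meeting $C$ are unchanged and are not $(-1)$-curves. Thus $q^*b$ would contain no $(-1)$-curve, forcing it to be irreducible by the standard observation. This reduces $p^*b$ to $m_C C + m_1 C_1$, and solving $p^*b\cdot C = p^*b \cdot C_1 = 0$ yields $C_1^2 = -1$, contradicting the uniqueness of $C$.

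For (2), I would induct on $i$, working on $Y_i$ obtained by contracting $C = C_0, C_1, \ldots, C_{i-1}$ in succession. The displayed dual graph of $C+E$, together with the assumption that $E+C$ meets $(p^*b)_{\mathrm{red}} - E - C$ only along $C_s$, ensures that for $0 \le i \le s-1$ the curve $C_i$ has in $X$ as neighbors precisely $C_{i-1}$ (when $i\ge 1$) and $C_{i+1}$. At each stage the image of $C_i$ in $Y_i$ becomes a $(-1)$-curve, because its self-intersection rises by one when the preceding neighbor is contracted; it is the unique $(-1)$-curve in $Y_i$'s fiber, since no component outside $C$ was originally $(-1)$ and the successive contractions along the chain each create exactly one new $(-1)$-curve (at the image of the parent). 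Its only neighbor in $Y_i$ is the image of $C_{i+1}$, so applying part (1) to $Y_i$ forces that image to be a $(-2)$-curve; since $C_{i+1}$ has not been touched by any contraction, its self-intersection in $X$ is also $-2$. Iterating for $i=0,1,\ldots,s-1$ yields $C_1^2=\cdots=C_s^2=-2$.

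For (3), I would suppose $m_C=1$ and derive a contradiction. The fiber equation at $C$ gives $\sum_{C_j \sim C} m_j = -m_C C^2 = 1$, so $C$ has a unique neighbor $C_1$ with $m_{C_1}=1$, and part (1) gives $C_1^2 = -2$. The fiber equation at $C_1$ then gives $\sum_{C' \sim C_1,\ C' \ne C} m' = -C_1^2 m_{C_1} - m_C = 1$, producing a unique further neighbor $C_2$ with $m_{C_2}=1$; the chain-argument of (2) shows $C_2^2 = -2$, and the fiber equation at $C_2$ again produces a unique next neighbor of multiplicity one. Repeating, I would produce an infinite sequence of distinct components $C, C_1, C_2, \ldots$ with all multiplicities $1$ and self-intersections $-2$ past the first, contradicting the finiteness of the dual tree of $p^*b$. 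The main obstacle throughout (2) and (3) is the bookkeeping: at every step one must verify that the image of the next chain component is genuinely the unique $(-1)$-curve in the new fiber and has a unique neighbor there, which relies on the snc/tree structure of $q_i^*b$ via Lemma \ref{snc-fiber-support} and on tracking exactly which contracted points lie on which strict transforms.
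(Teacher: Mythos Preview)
Your proof is correct. Parts (1) and (2) follow the same strategy as the paper: contract the unique $(-1)$-curve, observe that the resulting fiber must again contain a $(-1)$-curve, and iterate along the chain. Your (1) is phrased as a contradiction argument and handles the two-component edge case by a direct intersection computation, whereas the paper first notes that $p^*b$ must have at least three components and then argues directly; the content is the same.

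For (3) you take a genuinely different route. The paper inducts on the number $k$ of components of $p^*b$: contracting $C$ yields a fiber with $k-1$ components, and either two $(-1)$-curves appear and meet at the blown-up point (forcing $m_C\ge 2$ at once), or there is a unique $(-1)$-curve $D$ in the new fiber whose multiplicity exceeds $1$ by induction, whence $m_C\ge m_D>1$. You instead assume $m_C=1$ and use the fiber equations $p^*b\cdot C_i=0$, together with (1)--(2), to manufacture an unbounded chain $C,C_1,C_2,\ldots$ of multiplicity-one components with $C_i^2=-2$ for $i\ge 1$, contradicting finiteness of the dual tree. Your argument is more computational but avoids setting up a separate induction on $k$; the paper's argument is more structural, tracking how multiplicities grow under blow-down, and yields along the way the slightly sharper statement that $m_C$ dominates the multiplicity of the next $(-1)$-curve.
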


\begin{proof}
If $p^*b$ has one component, then it is a $0$-curve. If $p^*b$ has two components, then both of them are $(-1)$-curves. Since there is exactly one $(-1)$-curve in $p^*b$ by assumption, we obtain that  $p^*b$ has at least three components.

(1) Let $X\to Y$ be the birational morphism which contracts exactly $C$. Then $Y$ is smooth, and there is a fibration $q:Y\to B$ induced by $p$. Note that $q^*b$ has at least two components. Hence, from the MMP, we know that there is at least one $(-1)$-curve $D$ in $q^*b$. Since there is exactly one $(-1)$-curve $C$ in $p^*b$, the strict transform of $D$ in  $X$ meets $C$, and is a $(-2)$-curve.

(2) We will prove by induction on $i$. If $i=1$, then from (1), we know that $C_1$ is a $(-2)$-curve.  Assume that $C_1, ..., C_j$ are all $(-2)$-curves for some $j\geqslant 1$. We will prove that $C_{j+1}$ is a $(-2)$-curve. Set $C_0=C$. There is a birational morphism $h:X\to X'$ which contracts exactly $C_0,...,C_{j-1}$. Let $p':X'\to \p^1$ be the fibration induced by $p$. Then the strict transform of $C_j$ in $X'$ becomes the unique $(-1)$-curve in $p'^*b$ since $E$ and $((p^*b)_{\mathrm{red}}-E)$ intersect only along $C_s$. By (1), we obtain that the strict transform of $C_{j+1}$ in $X'$ is a $(-2)$-curve.  This shows that $C_{j+1}$ is  a  $(-2)$-curve since $h$ is an isomorphism around $C_{j+1}$. This completes the induction.

(3) We will prove  by induction on the number  $k$ of components of the fiber $p^*b$. If $k=3$, then the multiplicity of $C$ is $2$. Assume that the lemma is true for $k=l$, where $l\geqslant 3$ is an integer. 

Now we assume that $k=l+1$. Let $X\to Y$ be the birational morphism which contracts exactly $C$. Then $X$ is the blow-up of a point $y$ in $Y$. Let $q:Y\to \p^1$ be the fibration induced by $p$. If there are more than one $(-1)$-curves in $q^*b$, then they intersect at the point $y$. In this case, the multiplicity of $C$ is larger than one. If there is exactly one $(-1)$-curve $D$ in $q^*b$, then the multiplicity $m$ of $D$ in $q^*b$ is larger than one. In this case, the multiplicity of $C$ in $p^*b$ is not less than $m$. This completes the induction and the proof of the lemma.
\end{proof}

\begin{lemma}
\label{fiber-self-intersection-non-positive}
Let $p:X \to B$ be a projective fibration from a smooth surface to a smooth curve. Let $b$ be a point in $B$. Let $C_1,...,C_r$ be the components of $p^*b$. Then the intersection matrix $(C_i\cdot C_j)$ is negative. Moreover, $(\sum a_iC_i)^2=0$ if and only if $\sum a_iC_i$ is proportional to the fiber. In particular, if $C$ is a component  of $p^*b$ such that  $C \cdot C=0$, then $p^*b$ is irreducible with support $C$.
\end{lemma}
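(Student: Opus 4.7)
This is the classical Zariski lemma; the strategy is to use the fact that the class of the total fiber is numerically trivial on any component of the fiber, and then complete a square.

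Write $F = p^{*}b = \sum_{i=1}^{r} m_{i}C_{i}$ with $m_{i}>0$. Since $p$ is a fibration to a smooth curve, $F$ is algebraically equivalent to any other fiber, so $F \cdot C_{i}=0$ for every $i$. This gives the fundamental identity
\[
m_{i}\, C_{i}^{2} \;=\; -\sum_{j\neq i} m_{j}\, C_{i}\cdot C_{j} \qquad (i=1,\dots,r),
\]
and in particular $C_{i}^{2}\le 0$ for each $i$, since $C_{i}\cdot C_{j}\ge 0$ when $i\neq j$.

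Now let $D=\sum a_{i}C_{i}$ and set $b_{i}=a_{i}/m_{i}$. First I would compute
\[
D^{2} \;=\; \sum_{i} b_{i}^{2} m_{i}^{2} C_{i}^{2} \;+\; 2\sum_{i<j} b_{i}b_{j} m_{i}m_{j} C_{i}\cdot C_{j}.
\]
Substituting $m_{i}^{2}C_{i}^{2} = -\sum_{j\neq i} m_{i}m_{j}\,C_{i}\cdot C_{j}$ into the first sum and reorganising, one obtains
\[
D^{2} \;=\; -\sum_{i<j}\bigl(b_{i}-b_{j}\bigr)^{2} m_{i}m_{j}\, C_{i}\cdot C_{j}.
\]
Since $m_{i}m_{j}>0$ and $C_{i}\cdot C_{j}\ge 0$ for $i\neq j$, this shows $D^{2}\le 0$, which is the negative semi-definiteness claim.

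Equality $D^{2}=0$ forces $b_{i}=b_{j}$ whenever $C_{i}\cdot C_{j}>0$. Since $p$ is a fibration, its fibers are connected, so the dual graph of the $C_{i}$'s is connected; transitively, all the $b_{i}$ are equal to a common value $\lambda$, giving $D=\lambda F$, which is the "proportional to the fiber" statement. For the final assertion, if some component $C$ of $p^{*}b$ satisfies $C^{2}=0$, apply the equality case to $D=C$: then $C$ must be proportional to $F$, forcing $r=1$ and $\mathrm{Supp}(p^{*}b)=C$.

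The only real subtlety is the connectedness argument used for the equality case, but this is immediate from the paper's convention that a fibration has connected fibers; the rest is the algebraic identity above.
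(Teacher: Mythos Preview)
Your argument is correct: this is exactly the classical Zariski lemma, and your completion-of-squares computation
\[
D^{2} = -\sum_{i<j}(b_{i}-b_{j})^{2} m_{i}m_{j}\, C_{i}\cdot C_{j}
\]
together with connectedness of the fiber is the standard proof. The paper does not give its own argument here; it simply refers to Reid's \emph{Chapters on algebraic surfaces}, \S A.7, where precisely this proof appears. One trivial remark: you reuse the symbol $b$ for both the point of $B$ and the ratios $a_{i}/m_{i}$; renaming the latter (say $\lambda_{i}$) would avoid a clash.
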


\begin{proof}
See \cite[\S A.7]{Reid97}.
\end{proof}

\begin{lemma}
\label{sing-points-two}
Let $p:X\to B$ be a Mori fibration from a klt quasi-projective surface to a smooth curve $B$.  Let $b$ be a point in $B$. Assume that $X$ is not smooth  along $p^*b$. Then there are at most $2$ singular points of $X$ on $p^*b$.
\end{lemma}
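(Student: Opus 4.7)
The plan is to analyse the minimal resolution $r:\widetilde{X}\to X$ and the induced fibration $\tilde p:=p\circ r:\widetilde{X}\to B$. Write the fiber as $\tilde p^*b=m\widetilde{C}+\sum_i a_i E_i$, where $\widetilde{C}$ is the strict transform of $C$ and the $E_i$ are the $r$-exceptional divisors. By Lemma \ref{snc-fiber-support}, the divisor $\tilde p^*b$ has snc support, its dual graph $\Gamma$ is a tree, and all of its components are smooth rational curves. Since $X$ has klt singularities and $r$ is the minimal resolution, $K_{\widetilde{X}}\cdot E_i\geqslant 0$ for every $i$, so adjunction yields $E_i^{2}\leqslant -2$.

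The first step is to identify the number of singular points of $X$ on $p^*b$ with the valence $v$ of the vertex $\widetilde{C}$ in $\Gamma$. Every connected component of $\bigcup_i E_i$ is contracted by $r$ to a singular point of $X$ which necessarily lies on $C$, and the tree structure combined with the snc property forces each such connected component to meet $\widetilde{C}$ at exactly one point, since otherwise $\Gamma$ would contain a cycle through $\widetilde{C}$. So it suffices to prove $v\leqslant 2$.

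Next I would argue that $\widetilde{C}$ is a $(-1)$-curve in $\widetilde{X}$. The assumption that $X$ is singular along $p^*b$ forces $v\geqslant 1$, so $\tilde p^*b$ has at least two components. Adjunction for a general $\p^1$-fiber gives $K_{\widetilde{X}}\cdot\tilde p^*b=-2$. If every component $D$ of $\tilde p^*b$ satisfied $D^2\leqslant -2$, rationality would force $K_{\widetilde{X}}\cdot D\geqslant 0$ for each $D$ and therefore $K_{\widetilde{X}}\cdot\tilde p^*b\geqslant 0$, a contradiction. So some component is a $(-1)$-curve, and since every $E_i$ has self-intersection $\leqslant -2$, this component must be $\widetilde{C}$.

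Finally, I would apply Castelnuovo's criterion to contract $\widetilde{C}$, producing a smooth surface $\sigma:\widetilde{X}\to Y$ together with an induced $\p^1$-fibration $q:Y\to B$. Applying Lemma \ref{snc-fiber-support} to $q$, the divisor $q^*b$ again has snc support. The $v$ neighbors of $\widetilde{C}$ in $\Gamma$ meet $\widetilde{C}$ in $v$ pairwise distinct points, because the snc condition in $\widetilde{X}$ forbids three components meeting at a single point and the tree property forbids two distinct neighbors of $\widetilde{C}$ from meeting each other. So after blowing down, they become $v$ smooth curves through the single point $\sigma(\widetilde{C})$ with $v$ pairwise distinct tangent directions. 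An snc divisor on a smooth surface admits at most two analytic branches through any point, so $v\leqslant 2$, which is the required bound. The only subtle point of the argument is the valence interpretation in the first step; once that is in hand, the rest is a short application of Lemma \ref{snc-fiber-support} together with minimality of the resolution and Castelnuovo's criterion.
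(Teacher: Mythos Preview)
Your proposal is correct and follows essentially the same route as the paper: pass to the minimal resolution, identify the strict transform $\widetilde{C}$ as the unique $(-1)$-curve in $\tilde p^*b$, contract it via Castelnuovo, and invoke Lemma~\ref{snc-fiber-support} on the resulting smooth surface to conclude. You spell out in more detail the two points the paper leaves implicit---that the number of singular points of $X$ on $C$ equals the valence of $\widetilde{C}$ in the dual tree, and that the snc condition on $q^*b$ caps that valence at $2$---but the architecture is the same.
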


\begin{proof}
Let $r:\widetilde{X} \to X$ be the minimal resolution of $X$. Let $\tilde{p}=p\circ r$. Let $D$ be the support of $p^*b$. Since $p$ is a Mori fibration, $D$ is irreducible. Let  $\widetilde{D}$ be the strict transform of $D$ in $\widetilde{X}$. Then $\widetilde{D}$ is the only $K_{\widetilde{X}}$-negative curve in the fiber $\tilde{p}^*b$. In fact, $\widetilde{D}$ is a $(-1)$-curve by Lemma \ref{snc-fiber-support} and Lemma \ref{fiber-self-intersection-non-positive}. Let $\widetilde{X}\to S$ be the divisorial contraction which contracts $\widetilde{D}$. Let $q:S \to \p^1$ be the  fibration induced by $\tilde{p}$. Then $S$ is smooth, and $q^*b$ has snc support by Lemma \ref{snc-fiber-support}. This implies that there are at most $2$ singular points of $X$ on $p^*b$.
\end{proof}

\begin{lemma}
\label{double-fibre-canonical-2-sing}
Let $p:X\to B$ be Mori fibration from a klt quasi-projective  surface to a smooth curve. Let $b$ be a point in $B$, and let $C=\mathrm{Supp}(p^*b)$. Assume that $p^*b=2C$, and that there are two singular points of $X$ on $C$. Then $X$ has canonical singularities along $C$, and  the two singular points are   of type $A_1$.
\end{lemma}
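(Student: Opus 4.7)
The plan is to pass to the minimal resolution $r\colon\widetilde{X}\to X$, set $\tilde{p}=p\circ r$, let $\widetilde{C}$ be the strict transform of $C$, and let $\{E_{i}\}_{i=1}^{k}$ and $\{F_{j}\}_{j=1}^{l}$ be the $r$-exceptional curves above the two singular points $x_{1}$ and $x_{2}$ of $X$ lying on $C$. Since $\mathrm{Supp}(\tilde{p}^{*}b)=r^{-1}(C)=\widetilde{C}\cup\bigcup_{i}E_{i}\cup\bigcup_{j}F_{j}$, I would write
\[
\tilde{p}^{*}b=2\widetilde{C}+\sum_{i} a_{i}E_{i}+\sum_{j} b_{j}F_{j}
\]
with positive integer coefficients. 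By Lemma \ref{snc-fiber-support} this support is SNC with tree dual graph, so every component is a smooth rational curve; since the two exceptional sets are disjoint from each other while the full fiber must be connected, the tree forces $\widetilde{C}$ to meet exactly one $E_{i}$ (say $E_{1}$) and exactly one $F_{j}$ (say $F_{1}$), each transversally at a single point, giving $\widetilde{C}\cdot E_{1}=\widetilde{C}\cdot F_{1}=1$.

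The crucial input is the equality $K_{\widetilde{X}}\cdot\tilde{p}^{*}b=-2$, which follows from the general fiber of $\tilde{p}$ being $\mathbb{P}^{1}$. Minimality of $r$ gives $K_{\widetilde{X}}\cdot E_{i}\geq 0$ and $K_{\widetilde{X}}\cdot F_{j}\geq 0$, while Lemma \ref{fiber-self-intersection-non-positive} gives $\widetilde{C}^{2}<0$ (because $\widetilde{C}$ is a proper component of a reducible fiber, hence not proportional to it), whence $K_{\widetilde{X}}\cdot\widetilde{C}=-2-\widetilde{C}^{2}\geq -1$. Plugging these bounds into the expansion
\[
2(K_{\widetilde{X}}\cdot\widetilde{C})+\sum_{i} a_{i}(K_{\widetilde{X}}\cdot E_{i})+\sum_{j} b_{j}(K_{\widetilde{X}}\cdot F_{j})=-2
\]
will force every slack to vanish: $\widetilde{C}^{2}=-1$ and $K_{\widetilde{X}}\cdot E_{i}=K_{\widetilde{X}}\cdot F_{j}=0$ for all $i,j$. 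Hence $E_{i}^{2}=F_{j}^{2}=-2$, and in particular both singular points are Du Val, so $X$ has canonical singularities along $C$.

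To identify the types, I would use the remaining intersection identities. From $\tilde{p}^{*}b\cdot\widetilde{C}=0$ with $\widetilde{C}^{2}=-1$ I get $-2+a_{1}+b_{1}=0$, hence $a_{1}=b_{1}=1$. From $\tilde{p}^{*}b\cdot E_{1}=0$ with $E_{1}^{2}=-2$ I get
\[
2+(-2)+\sum_{k\geq 2}a_{k}(E_{k}\cdot E_{1})=0,
\]
so $\sum_{k\geq 2}a_{k}(E_{k}\cdot E_{1})=0$; since each summand is nonnegative and $a_{k}>0$, necessarily $E_{k}\cdot E_{1}=0$ for $k\geq 2$. Connectedness of the exceptional set over $x_{1}$ then forces $\{E_{i}\}=\{E_{1}\}$, so $x_{1}$ is a singularity with a single $(-2)$-curve, i.e.\ of type $A_{1}$. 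Symmetrically, the singularity at $x_{2}$ is of type $A_{1}$.

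The main subtlety will be making the numerical argument in the second paragraph airtight: each of the bounds $\widetilde{C}^{2}<0$, $K_{\widetilde{X}}\cdot E_{i}\geq 0$, and $K_{\widetilde{X}}\cdot\tilde{p}^{*}b=-2$ is routine in isolation, but it is their simultaneous tightness that does the work, pinning down $\widetilde{C}$ as a $(-1)$-curve and every exceptional curve as a $(-2)$-curve in one stroke. Once this is in place, the remaining determination of the singularity type reduces to bookkeeping on the SNC tree.
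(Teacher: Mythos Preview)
Your proof is correct and is genuinely different from the paper's argument. The paper proceeds by successive contractions: it first contracts the $(-1)$-curve $\widetilde{C}$, invokes Lemma \ref{exactly-one--1-curve}(1) to identify a $(-2)$-curve among the two neighbours $D,D'$, contracts again, and then uses Lemma \ref{exactly-one--1-curve}(3) together with Lemma \ref{fiber-self-intersection-non-positive} to force the remaining neighbour to satisfy $E^{2}=0$ after contraction, whence $D'$ is also a $(-2)$-curve and the residual part $R$ vanishes. Your route is instead purely numerical: you exploit the single global identity $K_{\widetilde{X}}\cdot\tilde{p}^{*}b=-2$ together with the minimality inequalities $K_{\widetilde{X}}\cdot E_{i},K_{\widetilde{X}}\cdot F_{j}\ge 0$ and $\widetilde{C}^{2}<0$ to force in one stroke that $\widetilde{C}$ is a $(-1)$-curve and every exceptional curve is a $(-2)$-curve; then two applications of $(\tilde{p}^{*}b)\cdot(\text{component})=0$ and connectedness of each exceptional fibre finish the identification as $A_{1}+A_{1}$. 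Your argument is shorter and avoids Lemma \ref{exactly-one--1-curve} entirely; the paper's more hands-on contraction method, on the other hand, is reused and extended in the one-singular-point case of Proposition \ref{double-fibre-canonical}, where the inductive chain of contractions is really needed. One small point worth making explicit: that every component of $\tilde{p}^{*}b$ is a smooth \emph{rational} curve is implicit in the proof of Lemma \ref{snc-fiber-support} (the fibre arises from a $\p^{1}$-bundle by blow-ups) rather than in its statement, so your use of $K_{\widetilde{X}}\cdot\widetilde{C}=-2-\widetilde{C}^{2}$ is justified but could be flagged.
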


\begin{proof}
Let $r:\widetilde{X}\to X$ be the minimal resolution and $\tilde{p}:\widetilde{X}\to B$ be the fibration  induced by $p$. If $\widetilde{C}$ is the strict transform of $C$ in $\widetilde{X}$, then it is the unique $(-1)$-curve in $\tilde{p}^*b$. Let $\widetilde{X}\to Y$ be the morphism which contracts exactly $\widetilde{C}$. Let $q:Y\to B$ be the fibration induced by $\tilde{p}$.

Since there are two singular points of $X$ on $C$, and the multiplicity of the fiber $p^*b$ is two, we can decompose $\tilde{p}^*b-2\widetilde{C}$ as $D+D'+R$ with $D$, $D'$ irreducible such that $\widetilde{C} \cdot D = \widetilde{C} \cdot D'=1$, $D \cdot D' =0$, and $\widetilde{C} \cdot R =0$. By   Lemma \ref{exactly-one--1-curve}.(1), either $D$ or $D'$ is a $(-2)$-curve. Without loss of generality, we may assume that $D$ is a $(-2)$-curve. Then $$0=\tilde{p}^*b \cdot D = 2\widetilde{C} \cdot D + D^2 +D' \cdot D + R \cdot D = R\cdot D.$$ In particular, the intersection of $R$ and $D$ is empty.

Let $Y\to Z$ be the morphism contracting exactly the strict transform of $D$ and $s:Z\to B$ be the fibration induced by $q$. Let $E$  be  the strict transform of $D'$ in $Z$. Then  there is at least one component in $s^*b$ which has negative intersection number with $-K_Z$. That is, there is at least one component in $s^*b$ which has self-intersection number larger than $-2$. Since $R$ does not meet $D$, we obtain that  $E$  is the unique curve in $s^*b$ which can have self intersection number larger than $-2$. Hence $E^2\geqslant -1$. Since the multiplicity of $E$ in $s^*b$ is one, by Lemma \ref{exactly-one--1-curve}.(3), we  have  $E^2\neq -1$. However, by Lemma \ref{fiber-self-intersection-non-positive}, we  have $E^2\leqslant 0$. Hence, $E^2=0$ and $D'$ is a $(-2)$-curve. For the same reason as before, we obtain that $R\cdot D'=0$.

Hence, $R=0$ and $\tilde{p}^*b=D+D'+2\widetilde{C}$. This proves the lemma.
\end{proof}

Now we will prove  Proposition \ref{double-fibre-canonical}.

\begin{proof}[{Proof of Proposition \ref{double-fibre-canonical}}]
Since $p$ is a Mori fibration, $C$ is irreducible. Note that $X$ is not smooth along $C$ since $p^*b=2C$. In fact, assume that it is. Then by the adjunction formula, we have $$2h^1(C,\sO_C)-2=(K_X+C)\cdot C = K_X\cdot C = -1.$$ This is a contradiction.

Let $t$ be the number of singular points of $X$ on the support of $p^*b$. Then $t\leqslant 2$ by Lemma \ref{sing-points-two}. Let $r:\widetilde{X} \to X$ be the minimal resolution, and let $\widetilde{C}$ be the strict transform of $C$ in $\widetilde{X}$. Then $\widetilde{C}$ is a $(-1)$-curve. Let $\tilde{p}=p\circ r$.  Let $E=\tilde{p}^*b-2\widetilde{C}$.

First assume that there are exactly two singular points on $C$. Then the singularities at these two points are canonical of type $A_1$ by Lemma \ref{double-fibre-canonical-2-sing}.

Now assume that there is only one singular point on $C$. Let $C_0=\widetilde{C}$, and let $E_0=E$.   We will show that there is a positive integer $i$ such that we can decompose $$ \tilde{p}^*b=2(C_0+C_1+\cdots +C_i)+D+D'+R \ \ \ \ \ \ \ \ \ \  (*)$$ such that all of the $C_j$'s with $j>0$ are $(-2)$-curves, that both $D$ and $D'$ are smooth rational curves and that $R$ is disjoint from $C_0+\cdots +C_i$. Furthermore, we have $D \cdot D'=0$, $C_i \cdot D= C_i \cdot D'=1$, $C_j \cdot C_{j+1}=1$ for $1 \leqslant j \leqslant i-1$, and $C_j \cdot C_k=0$ if $k-j>1$. In particular, the dual graph of $\tilde{p}^*b -R$ is as follows.

\centerline{
\xymatrix  @R=.2pc {
 & \overset{D'}{\bullet} \ar@{-}[d]\\
\underset{D}{\bullet}  \ar@{-}[r] & \underset{C_i}{\bullet} \ar@{-}[r] &  \underset{C_{i-1}}{\bullet} \ \cdots \ \   \underset{C_1}{\bullet}\ar@{-}[r] & \underset{C_0}{\bullet}
}}

We will  construct the decomposition $(*)$ by induction. Since there is only one singular point over $C$, and $\tilde{p}^*b$ is an snc divisor, the support of $E_0$ intersects $C_0$ transversally at a point. Moreover since $C_0\cdot E_0=2$, we can decompose $E_0$ into $2C_1+E_1$, where $C_1$ is a smooth rational curve and $C_0\cdot E_1=0$. From Lemma \ref{exactly-one--1-curve}.(1), we know that $C_1$ is a $(-2)$-curve.  We have $$C_1\cdot E_0=C_1\cdot (\tilde{p}^*b-2C_0)=-2C_1\cdot C_0=-2  \ \mathrm{and}\   E_0^2=(\tilde{p}^*b-2C_0)^2= 4C_0^2=-4.$$ Hence we obtain that $$E_1^2=(E_0-2C_1)^2=-4\ \mathrm{and}\ C_1 \cdot E_1=C_1\cdot(E_0-2C_1)=2.$$

Assume that we can decompose $$\tilde{p}^*b=2(C_0+C_1+\cdots +C_k)+E_k$$ for some $k> 0$ such that $C_1,...,C_k$ are $(-2)$-curves, $E_k^2=-4$, $C_k\cdot E_k=2$ and $E_k$ is disjoint from $(C_0+ \cdots +C_{k-1})$. 

The condition $C_k\cdot E_k=2$ implies that $C_k$ and $E_k$ intersect at one or two points. If they intersect at one point, then  we can decompose $E_k$ into $2C_{k+1}+E_{k+1}$ such that $C_{k+1}$ is a smooth rational curve, $C_k\cdot C_{k+1}=1$ and  $E_{k+1}\cdot C_k=0$. Then $$\tilde{p}^*b=2(C_0+C_1+\cdots +C_{k+1})+E_{k+1}.$$ From Lemma \ref{exactly-one--1-curve}.(2), we know that $C_{k+1}$ is  a $(-2)$-curve. We also  have $$C_{k+1}\cdot E_k =C_{k+1}\cdot (\tilde{p}^*b-2(C_0+\cdots C_k))=-2C_{k+1} \cdot C_k=-2.$$  Thus $$E_{k+1}^2=(E_k-2C_{k+1})^2=-4 \ \mathrm{and} \ C_{k+1} \cdot E_{k+1}= C_{k+1} \cdot (E_k-2C_{k+1})=2.$$ We are in the same situation as before. In this case, we repeat the same procedure.

Since there are only finitely many components in $\tilde{p}^*b$,   we can find  a positive integer $i$ such that $$\tilde{p}^*b=2(C_0+C_1+\cdots +C_i)+E_i$$ and $C_i$ intersects $E_i$ at two different points. As in the proof of Lemma \ref{double-fibre-canonical-2-sing}, we can write $E_i=D+D'+R$, where $D$, $D'$ are smooth rational curves such that $D\cdot D'=0$, $R\cdot C_i=0$,  $D\cdot C_i=1$, and $D'\cdot C_i=1$.  Hence we obtain the decomposition $(*)$

Now we will prove that $R=0$. There is a birational morphism $\widetilde{X}\to W$ which contracts exactly $C_1, ..., C_{i-1}$. The surface $W$ is smooth and there is a fibration $q:W\to B$ induced by $\tilde{p}$. We still denote by $C_i$, $D$, $D'$ and $R$ their strict transformations  in $W$. Then $C_i$  is the unique $(-1)$-curve in $q^*b$ and $q^*b=2C_i+D+D'+R$.  Similarly to the proof of Lemma \ref{double-fibre-canonical-2-sing}, we can prove that $D^2=D'^2=-2$, and $R=0$. Hence, the singularity of the singular point is canonical of type $D_{i+2}$.
\end{proof}

\subsection{Proof of Theorem \ref{thm-rat-surface} and Theorem \ref{main-thm}}
\label{proof-surface}

We will first  prove two theorems on classification of rational surfaces (Theorem \ref{thm-surface-1} and Theorem \ref{thm-surface-2}).

\begin{thm}
\label{thm-surface-1}
Let $X$ be a Fano surface with klt singularities. Assume that $h^0(X,\sO_X(-K_X))\geqslant 2$ and $\mathrm{Cl}(X)=\mathbb{Z}\cdot [K_X]$. Then either $X \cong S^c(E_8)$ or $X \cong S^n(E_8)$.
\end{thm}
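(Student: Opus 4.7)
My plan is first to rule out the smooth case, then to prove that $X$ has only canonical singularities, and finally to invoke the classification of Fano surfaces of Picard number one with canonical singularities together with the index-one condition forced by $\mathrm{Cl}(X)=\bZ\cdot[K_X]$. For the smooth case: if $X\cong\p^2$, then $\mathrm{Cl}(\p^2)=\bZ\cdot[H]$ with $H$ a line and $[K_{\p^2}]=-3[H]$, so $\bZ\cdot[K_{\p^2}]$ is a proper subgroup of $\mathrm{Cl}(\p^2)$. Thus $X$ must be singular.

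The central step is to show that $X$ has only canonical (Du Val) singularities. I would apply Proposition~\ref{aux-surface} to produce an auxiliary surface $Z$ with a birational morphism $\pi:Z\to X$ and a fibration $p:Z\to\p^1$ whose general fibers are smooth rational curves, every $\pi$-exceptional curve horizontal over $\p^1$, and at most one such curve crepant over $X$. Writing $\pi^*K_X=K_Z-\sum a_i E_i$ with $-1<a_i\le 0$, suppose for contradiction that some $a_i\in(-1,0)$. For a general fiber $F$ of $p$, horizontality gives $E_i\cdot F\ge 1$, hence
\[
\pi^*(-K_X)\cdot F \;=\; 2 + \sum a_i(E_i\cdot F) \;<\; 2.
\]
By the projection formula, the same quantity equals $(-K_X)\cdot\pi_*F$; and since $\mathrm{Cl}(X)=\bZ\cdot[K_X]$, the effective curve $\pi_*F$ has class $m[-K_X]$ for some positive integer $m$, so $mK_X^2<2$. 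This should be combined with the lower estimate on $K_X^2$ coming from $h^0(\sO_X(-K_X))\ge 2$ via Kawamata--Viehweg vanishing and a singular Riemann--Roch computation on the $\bQ$-factorial klt surface $X$; the local corrections at the non-canonical point contribute in a way that, after careful bookkeeping, forces a contradiction. This shows all $a_i=0$, so $X$ has canonical singularities.

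Granting canonical singularities, the classification recalled at the end of Section~\ref{surface-pre} leaves only $\p^2$ (excluded), a Fano surface with a single singular point of Dynkin type $A_1$, $A_4$, $D_5$, $E_6$, $E_7$, or $E_8$, the two-point surface of type $A_1+A_2$, or $S^c(E_8)$ and $S^n(E_8)$. The hypothesis $\mathrm{Cl}(X)=\bZ\cdot[K_X]$ is equivalent to saying that the Fano index of $X$ is $1$; a case-by-case computation of the index from the intersection matrices of the $ADE$ exceptional configurations shows that only the two $E_8$ surfaces achieve index $1$. This forces $X\cong S^c(E_8)$ or $X\cong S^n(E_8)$.

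The hard part is the reduction to canonical singularities. The intersection-theoretic estimate $mK_X^2<2$ is easy; converting it into a strict contradiction with the hypothesis $h^0(\sO_X(-K_X))\ge 2$ requires delicate singular Riemann--Roch on the $\bQ$-factorial non-Gorenstein surface $X$, with precise control over the local Dedekind-type correction terms at the non-canonical klt singularity. Once canonical singularities are achieved, the remaining index computation is essentially bookkeeping with $ADE$ intersection matrices.
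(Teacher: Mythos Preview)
Your overall architecture matches the paper's: rule out $\p^2$, reduce to canonical singularities, then invoke the classification and the index-one condition. The last two steps are fine. The problem is entirely in the reduction to canonical singularities, where your argument is both incomplete and misses a case.

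First, the Riemann--Roch step is not a proof but a wish. You obtain $mK_X^2<2$ and then say the contradiction should follow from singular Riemann--Roch ``after careful bookkeeping'' with the local correction terms. You never do this bookkeeping, and there is no evident reason it works: the correction terms at a non-Gorenstein klt point can have either sign, and $h^0(-K_X)\ge 2$ together with Kawamata--Viehweg gives only $1+K_X^2+(\text{corrections})\ge 2$, which does not force $K_X^2\ge 2$. The paper does \emph{not} argue via a lower bound on $K_X^2$. Instead, it observes that if $\pi_*^{-1}T\cdot F<2$ (in particular when some $a_i<0$), then the strict transform of a general $T\in|-K_X|$ is a section or a fiber of $p$, so $T$ is a rational curve; one then takes a general pencil in $|-K_X|$, normalizes the graph of the induced rational map $X\dashrightarrow\p^1$, and shows the resulting surface is a Hirzebruch surface $\Sigma_k$ with $k\ge 2$ contracting to $X$. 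Lemma~\ref{can-degree-formula} then says $-K_X$ cannot be a minimal curve, contradicting $\mathrm{Cl}(X)=\bZ\cdot[K_X]$.

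Second, even if your Riemann--Roch argument succeeded, you would only have shown that every $\pi$-exceptional divisor is crepant over $X$. This does \emph{not} yet give canonical singularities on $X$: since $\widetilde{X}\to Z$ is the minimal resolution of $Z$, any non-canonical point of $Z$ (necessarily lying on a fiber of $p$, away from the horizontal exceptional curves) is also a non-canonical point of $X$. The paper handles this separately: once all $a_i=0$, there is exactly one $\pi$-exceptional curve, so $p:Z\to\p^1$ is a Mori fibration; if some fiber has multiplicity $m>2$ supported on $C$, then $-K_X\cdot\pi_*C=2/m<1$, contradicting minimality of $-K_X$ (which lies in the smooth locus, hence has integral intersection with $-K_X$). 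Thus all fiber multiplicities are at most $2$, and Proposition~\ref{double-fibre-canonical} forces $Z$, hence $X$, to have canonical singularities. Your proposal omits this entire second half of the argument.
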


\begin{proof}
If $X$ has canonical singularities, then $K_X$ is a Cartier divisor. Hence $\mathrm{Pic}(X)\cong \mathrm{Cl}(X)$. This  implies that $X$ has one singular point of type $E_8$ by \cite[Lem. 6]{MZ88}. Hence either $X \cong S^c(E_8)$ or $X \cong S^n(E_8)$.

Now we will  prove that $X$ has canonical singularities. First note that $X$ is singular.  Let $T$ be the curve given by a general member of $H^0(X,\sO_X(-K_X))$. Then $T$ is a minimal curve.

Let $Z$ be the surface described in Proposition \ref{aux-surface}. Then there are birational morphisms $\widetilde{X} \longrightarrow Z \overset{\pi}{\longrightarrow} X$  and a fibration $p:Z \to \p^1$, where $\widetilde{X} \to X$ is the minimal resolution of $X$.

\centerline{
\xymatrix{
Z  \ar[r]^{\pi}  \ar[d]^{p}
&X  
\\ 
\p^1 
& 
}
}

Let $F$ be a general fiber of the fibration $p:Z\to \p^1$. We denote the $\pi$-exceptional curves by $E_1,...,E_s$. Since $X$ has klt singularities, and the minimal resolution $\widetilde{X}\to X$ factors through $\widetilde{X}\to Z$, we obtain that $K_Z=\pi^*K_X+\sum_{i=1}^s a_iE_i$ with $-1<a_i\leqslant 0$ for all $i$. Hence, $$\pi^*T \cdot F=-\pi^*K_X\cdot F = -K_Z \cdot F + \sum_{i=1}^s a_iE_i\cdot F=2 + \sum_{i=1}^s a_iE_i\cdot F.$$ Since all of the $E_i$'s are horizontal over $\p^1$ with respect to the fibration $p$, we have $\sum a_iE_i\cdot F \leqslant 0$. Thus $\pi^*T \cdot F \leqslant 2$, and the equality holds if and only if all of the $a_i$'s are equal to $0$.

Moreover, we have $\pi^*T=\pi_*^{-1}T+\sum_{i=1}^s b_iE_i$ with $b_i \geqslant 0$ for all $i$. Hence $$\pi_*^{-1}T \cdot F= \pi^*T \cdot F - \sum_{i=1}^s b_iE_i\cdot F \leqslant \pi^*T \cdot F \leqslant 2$$ for the same reason as before.

First assume that $\pi_*^{-1}T \cdot F < 2$. Then $\pi_*^{-1}T$ is either a section or a fiber of $p$. Hence $T$ is a rational curve. Let $X \dashrightarrow \p^1$ be the rational map induced by a general $1$-dimensional linear subsystem of the linear system $|-K_X|$ which contains $T$. Let $Y$ be the normalisation of the graph of this rational map. Let $p_1:Y \to X$ and $p_2:Y \to \p^1$ be natural projections. Note that if a curve in $Y$ is contained in the fiber of $p_2$, then it is not contracted by $p_1$ since the graph of $X \dashrightarrow \p^1$ is contained in $X \times \p^1$, and the normalisation map is finite. Hence every fiber of $p_2:Y \to \p^1$ is reduced and irreducible for $-K_X$ is  minimal. Let $T'$ be the strict transform of $T$ in $Y$. Then $T'$ is smooth since $Y$ is normal and $T$ is a general member of $|-K_X|$. Since $T$ is a rational curve, we have $T'\cong \p^1$ and  $p_2$ is a $\p^1$-bundle by Lemma \ref{family-smooth-curves}.   In this case, $Y\cong \Sigma_k$ is a Hirzebruch surface. Since $X$ is singular, we obtain that $k\geqslant 2$ and that $p_1:Y \to X$ is the morphism which contracts the special rational curve in $Y$ with self intersection number $-k$. Thus $-K_X$ cannot be a minimal curve  by Lemma \ref{can-degree-formula} and we obtain a contradiction.

Hence we must have $\pi_*^{-1}T \cdot F = 2$. This shows that $a_i=b_i=0$ for all $i$. By the construction of $Z$, this implies that there is exactly one $\pi$-exceptional curve $D$ in $Z$. Since $X$ has Picard number $1$, $Z$ has Picard number $2$. Hence $p$ is a Mori fibration, and every fiber of $p$ is irreducible.

\centerline{
\xymatrix{
\widetilde{X}   \ar[d]_{\mathrm{minimal \ resolution}} \ar[dr]^{\mathrm{minimal \ resolution}}
& 
\\
Z  \ar[r]^{\pi}_{\mathrm{crepant}} \ar[d]^{p}_{\mathrm{Mori \ fibration}}
&X  
\\ 
\p^1 
& 
}
}

Moreover, since $b_i=0$ for all $i$, the curve $T$ is contained in the smooth locus of $X$. In particular, we have $$-K_X\cdot T\geqslant 1.$$

In order to prove that $X$ has canonical singularities, it is enough to prove that $Z$ has canonical singularities since $\pi^*K_{X}=K_Z$. Note that if every fiber of $p$ has multiplicity at most equal to $2$, then by Proposition \ref{double-fibre-canonical}, $Z$ has canonical singularities.  Now we will assume by contradiction that there is a point $b \in \p^1$ such that $p^*b = mC$ with $C$ reduced  and $m>2$. Since $a_i=0$ for all $i$, by the projection formula we have $$-K_X \cdot \pi_*C=-\pi^*K_X \cdot C =  -K_Z \cdot C = \frac{2}{m}< 1 \leqslant -K_X\cdot T.$$ 

This is a contradiction since $T$ is a minimal curve. Thus  $X$ has canonical singularities. This completes the proof of the theorem
\end{proof}

\begin{thm}
\label{thm-surface-2}
Let $X$ be a Fano surface with klt singularities and let $T$ be a minimal curve. Assume that $\mathrm{Cl}(X)\cong \mathbb{Z}\cdot [T]$, $h^0(X,\sO_X(T))\geqslant 3$ and   $h^0(X, \sO_X(T+K_X)) =0$. Then $X\cong \p^2$. 
\end{thm}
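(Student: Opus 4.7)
The plan is to treat the smooth and singular cases separately. If $X$ is smooth, then a smooth Fano surface of Picard number one must be $\p^2$, so assume henceforth that $X$ is singular; the remainder of the argument derives a contradiction. Since $\mathrm{Cl}(X)=\mathbb{Z}\cdot [T]$ and $X$ is Fano, $-K_X\sim aT$ for some positive integer $a$, and the hypothesis $h^0(X,\sO_X(T+K_X))=0$ rules out $a=1$ (which would give $T+K_X\sim 0$, hence $h^0=1$), so $a\geqslant 2$. Proposition \ref{aux-surface} then provides a rational surface $Z$ with klt singularities, a fibration $p:Z\to \p^1$ with general fiber $\p^1$, and a birational morphism $\pi:Z\to X$ whose exceptional curves $E_i$ are all horizontal over $\p^1$ with discrepancies $-1<a_i\leqslant 0$. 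For a general fiber $F$ of $p$,
$$a\,\pi^{*}T\cdot F=-\pi^{*}K_X\cdot F=-K_Z\cdot F+\sum_i a_iE_i\cdot F=2+\sum_i a_iE_i\cdot F\leqslant 2,$$
so $\pi^{*}T\cdot F\leqslant 2/a\leqslant 1$; and minimality of $T$ applied to $\pi_*F$ gives $aT^{2}=-K_X\cdot T\leqslant -K_X\cdot\pi_*F\leqslant 2$, hence $T^{2}\leqslant 2/a\leqslant 1$.

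Next, I analyze the rational map $\phi:X\dashrightarrow \p^{N}$ defined by a $2$-dimensional subsystem of $|T|$ (using $h^{0}(T)\geqslant 3$, so $N\geqslant 2$). If $\phi$ were not dominant, its image would be a curve, and Stein factorization together with the rationality of $X$ would yield a dominant rational map $X\dashrightarrow \p^{1}$; resolving it by some $\sigma:X'\to X$ and choosing a general fiber $F'$, we have $(F')^{2}=0$, and the projection formula gives $(\sigma_*F')^{2}=0$ (since general $F'$ is disjoint from the exceptional locus of $\sigma$). But $\sigma_*F'$ is an effective curve on $X$, and the Picard-number-one hypothesis forces $\sigma_*F'\equiv eT$ for some integer $e\geqslant 1$, giving $(\sigma_*F')^{2}=e^{2}T^{2}>0$, a contradiction. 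Hence $\phi$ is dominant and $T^{2}=\deg\phi$ is a positive integer; combined with $T^{2}\leqslant 2/a\leqslant 1$, this forces $a=2$, $T^{2}=1$, and $\phi$ birational.

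Finally, I return to $Z$. A general member $T'\in|T|$ is irreducible (it is birational to a line under $\phi$), so $\pi_*^{-1}T'$ cannot be vertical; after replacing $T$ by such a general member, $\pi_*^{-1}T\cdot F\geqslant 1$. Combined with $\pi^{*}T\cdot F=1$, this forces $\pi^{*}T=\pi_*^{-1}T$ and $a_i=0$ for every $E_i$, so $\pi$ is crepant. By clause (4) of Proposition \ref{aux-surface}, $\pi$ has at most one exceptional curve; the case of zero is impossible (else $X\cong Z$ would admit a fibration to $\p^{1}$, contradicting the Picard number one hypothesis), so $\pi$ contracts exactly one curve $E_{1}$, a crepant section of $p$, which by adjunction is a $(-2)$-curve. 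Then $Z$ has Picard number $2$ and $p$ is a Mori fibration. For any fiber $F'=mC$ of $p$ (with $C$ reduced and irreducible), $\pi_*C$ is an effective curve with $-K_X\cdot\pi_*C=-K_Z\cdot C=2/m$, and minimality of $T$ forces $2/m\geqslant -K_X\cdot T=2$, so $m=1$; every fiber is reduced and irreducible, and by Lemma \ref{family-smooth-curves}, $p$ is a $\p^{1}$-bundle. Hence $Z$ is smooth, and together with the $(-2)$-section $E_{1}$, $Z\cong\Sigma_{2}$; contracting $E_{1}$ gives $X\cong \p(1,1,2)$. But $\p(1,1,2)$ has $-K_X=4T$ with $T^{2}=1/2$, contradicting the values $a=2$ and $T^{2}=1$ derived above. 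This contradiction shows $X$ must be smooth, hence $X\cong \p^{2}$.

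The main obstacle is the final step: extracting from Proposition \ref{aux-surface}(4) that $\pi$ has exactly one exceptional curve (a $(-2)$-section), then using minimality of $T$ to eliminate multiple fibers and Lemma \ref{family-smooth-curves} to identify $p$ as a $\p^{1}$-bundle. This rigidifies $Z$ to $\Sigma_{2}$ and $X$ to $\p(1,1,2)$, whose numerical invariants then contradict the values $a=2$, $T^{2}=1$ forced in the preceding step.
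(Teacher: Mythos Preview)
Your argument has a genuine gap in the paragraph establishing $T^{2}\geqslant 1$. When you resolve the indeterminacy of a pencil $X\dashrightarrow\p^{1}$ by $\sigma:X'\to X$, the $\sigma$-exceptional curves sit over the base points of the pencil and are therefore \emph{horizontal} multisections of $X'\to\p^{1}$, not vertical. Hence a general fiber $F'$ does meet the exceptional locus, and the claim $(\sigma_{*}F')^{2}=0$ is simply false: indeed $\sigma_{*}F'\equiv eT$ for some $e\geqslant 1$ already gives $(\sigma_{*}F')^{2}=e^{2}T^{2}>0$, so no contradiction is produced and you have not shown $\phi$ is dominant. A second, smaller slip: even if $\phi$ is dominant, one only gets $T^{2}\geqslant\deg\phi$ (resolving base points subtracts an effective exceptional divisor with $E^{2}\leqslant 0$), not equality.

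The paper sidesteps this entirely by invoking Lemma~\ref{intersection-smooth-point}: since $h^{0}(\sO_{X}(T))\geqslant 3$, through a general (smooth) point of $X$ pass two distinct members of $|T|$, hence $T^{2}\geqslant 1$ immediately. From there the paper's endgame is also much shorter than yours. Rather than pinning down $a=2$, $T^{2}=1$ and then identifying $Z\cong\Sigma_{2}$, the paper simply splits on whether $\pi_{*}F$ is minimal: if so, every fiber of $p$ is reduced irreducible (any component would push forward to a curve of smaller degree), so $Z\cong\Sigma_{k}$ with $k\geqslant 2$ and Lemma~\ref{can-degree-formula} gives $T^{2}=1/k<1$; if not, then $\pi_{*}F\sim\alpha T$ and $-K_{X}\sim\beta T$ with $\alpha,\beta\geqslant 2$, giving $-K_{X}\cdot\pi_{*}F=\alpha\beta\,T^{2}\geqslant 4>2$. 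Either way contradicts the earlier bounds.

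Your steps after the gap---deducing $a_{i}=0$, using Proposition~\ref{aux-surface}(4) to get a single crepant section, forcing $m=1$ for all fibers, and identifying $X\cong\p(1,1,2)$---are correct and give a valid alternative finish once $T^{2}\geqslant 1$ is secured. But as written the proof is incomplete: replace your dominance argument by the one-line appeal to Lemma~\ref{intersection-smooth-point}.
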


\begin{proof}
It is enough to prove that $X$ is smooth. Assume that $X$ is singular. Since $h^0(X,\sO_X(T)) \geqslant 3$, through a general point of $X$, there passes at least two different minimal curve. Hence $T^2 \geqslant 1$ by Lemma \ref{intersection-smooth-point}. Let $\pi:Z \to X$ be the surface described in Proposition \ref{aux-surface}. 

\centerline{
\xymatrix{
Z  \ar[r]^{\pi}  \ar[d]^{p}
&X  
\\ 
\p^1 
& 
}
}

Let $F$ be a general fiber of the fibration $p:Z\to \p^1$. We denote the $\pi$-exceptional curves by $E_1,...,E_s$. As in the  proof of Theorem \ref{thm-surface-1}, we have $$-K_X \cdot \pi_*F  =-\pi^*K_X\cdot F = -K_Z \cdot F + \sum_{i=1}^s a_iE_i\cdot F=2 + \sum_{i=1}^s a_iE_i\cdot F\leqslant 2.$$ 

If $\pi_*F$ is a minimal curve, then every fiber of $p$ is reduced and irreducible. Thus $Z\cong \Sigma_k$ is a Hirzebruch surface  Lemma \ref{family-smooth-curves}. Since $X$ is singular, we obtain that $k\geqslant 2$ and that $\pi:Z \to X$ is the morphism which contracts the special rational curve in $Z$ with self intersection number $-k$. Hence, by Lemma \ref{can-degree-formula}, we have $$1\leqslant T \cdot T=\pi_*F\cdot \pi_*F=\frac{1}{k}.$$  This is a contradiction.

If $\pi_*F$ is not a minimal curve in $X$, then there is an integer $\alpha>1$ such that $\pi_*F$ is linearly equivalent to $\alpha T$. Moreover, since  $-K_X$ is  $\mathbb{Q}$-ample, and $h^0(X, \sO_X(T+K_X)) =0$, there is an integer $\beta > 1$ such that $-K_X$ is linearly equivalent to $\beta T$. Hence, $-K_X \cdot \pi_*F = \alpha \beta T^2 \geqslant 4$, yielding a contradiction.
\end{proof}

After Theorem \ref{thm-surface-1} and Theorem \ref{thm-surface-2}, in order to prove Theorem \ref{thm-rat-surface}, it remains to prove that $S^c(E_8)$ does not satisfy the conditions of Theorem \ref{thm-rat-surface}. Let $X$ be a surface isomorphic to $S^c(E_8)$. Let $T$ be a minimal curve in $X$. Then $\sO_X(T)\cong \sO_X(-K_X)$. We will show that $h^0(X, \Omega_{X}^{[1]}[\otimes] \sO_{X}(T))\geqslant 1$. In order to do this, we will need the following results.

\begin{lemma}
\label{isotrivial-E8}
Let $X$ be a surface isomorphic to $S^c(E_8)$. Then all of the elliptic curves in the linear system $|-K_X|$  are isomorphic.
\end{lemma}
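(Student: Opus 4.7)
The plan is to exhibit $|-K_X|$ as the pencil of an isotrivial elliptic fibration by analyzing Kodaira fiber types on a smooth minimal model. Let $\pi_0:Y\to X$ be the blow-up of the unique basepoint of $|-K_X|$; then $p_Y:Y\to\p^1$ is an elliptic fibration whose smooth fibers are exactly the smooth members of $|-K_X|$, so the lemma reduces to proving that $p_Y$ is isotrivial. Taking the minimal resolution of the $E_8$ singularity of $Y$ and contracting any $(-1)$-curves contained in fibers, one obtains a smooth relatively minimal rational elliptic surface $\widetilde{Y}\to\p^1$ with the same smooth fibers as $p_Y$.

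Next I would identify the two singular fibers of $\widetilde{Y}\to\p^1$, which sit over the images of the two rational members of $|-K_X|$. Over the rational curve $R$ through the $E_8$ point of $X$, the fiber contains the whole $E_8$ chain of $(-2)$-curves produced by the minimal resolution, together with the proper transform of $R$; the dual graph and multiplicities match those of a Kodaira $II^*$ fiber. Over the strict transform $\widetilde{C}$ of the cuspidal cubic, whose cusp lies in the smooth locus of $X$, resolving the cusp together with the minimality operations above produces a cusp-type fiber. Using $e(\widetilde{Y})=12$ and the fact that a $II^*$ fiber contributes $10$, the remaining singular fibers contribute only $2$ in total, which forces exactly one additional fiber of type $II$ and no others.

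Finally, isotriviality follows either from the Miranda--Persson classification of extremal rational elliptic surfaces, which identifies $\widetilde{Y}$ with the unique such surface having fiber configuration $II^*+II$ and constant $j$-invariant $0$, or directly from the Weierstrass model: with $A\in H^0(\p^1,\sO(4))$ and $B\in H^0(\p^1,\sO(6))$, the prescribed vanishing orders at the $II^*$ and $II$ fibers ($\mathrm{ord}\,A\geqslant 4$, $\mathrm{ord}\,B=5$ at one point; $\mathrm{ord}\,A\geqslant 1$, $\mathrm{ord}\,B=1$ at the other) force $A\equiv 0$ by a degree count, so that $j\equiv 0$ and every smooth fiber is isomorphic to the equianharmonic curve $\{y^2=x^3+1\}$. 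The main obstacle is the precise identification of the Kodaira type above the cuspidal member, which requires tracking the cusp of $C$ through the iterated blow-up construction of $S^c(E_8)$ to verify that no extraneous components appear and that the resolved fiber is exactly of type $II$; once the $II^*+II$ configuration is in hand, the conclusion is essentially formal.
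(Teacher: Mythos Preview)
Your argument is correct and takes a genuinely different route from the paper. The paper works entirely in coordinates on $\p^2$: it places the cuspidal cubic $C$ as $a_3a_2^2=a_1^3$ with inflection point $[0:1:0]$, writes out a general cubic, imposes the condition of meeting $C$ to order $9$ at that point, and reads off that every such cubic has equation $a_3a_2^2=a_1^3+\lambda a_3^3$, hence $j=0$. No Kodaira theory, no Euler characteristic, just a direct linear-algebra computation in the pencil.

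Your approach is the structural one via the classification of extremal rational elliptic surfaces. It has the advantage of explaining \emph{why} the isotriviality holds: the $E_8$ resolution forces a $II^*$ fiber, the topological constraint $e=12$ then leaves room for exactly one more singular fiber of Euler contribution $2$, and the cuspidal member in the smooth locus supplies a type $II$; the $II^*+II$ configuration (or the Weierstrass degree count forcing $A\equiv 0$) then pins down $j\equiv 0$. This also makes the contrast with $S^n(E_8)$ transparent: there the nodal member gives $I_1$ instead of $II$, and the third rational curve contributes another $I_1$, yielding the non-isotrivial $II^*+I_1+I_1$ surface. The cost is that your argument invokes more machinery (Kodaira's table, the Euler number formula, and either Miranda--Persson or the Weierstrass normalization) and, as you note, requires checking that the fiber over the cuspidal member really is of type $II$---i.e.\ that the basepoint of $|-K_X|$ is a smooth point of the cuspidal curve, so the cusp survives to $Y$. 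That check is easy (the strict transform is a fiber of arithmetic genus $1$, hence not a smooth rational curve), but it should be stated.
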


\begin{proof}
We can obtain $X$ after some birational transform from $\p^2$ (See section \ref{surface-pre}). There is a unique cuspidal rational curve in  the linear system $|-K_X|$ which is contained in the smooth locus of $X$.  Let $C$ the strict transform of this curve in $\p^2$. Then there is a unique smooth inflection point $x$ on $C$.    We may assume that $C$ is given by the equation $a_3a_2^2=a_1^3$, and that $x$ is the point $[0:1:0]$, where $[a_1:a_2:a_3]$ are coordinates of $\p^2$. Let $E$ be the strict transform in $\p^2$ of a smooth elliptic curve in the family induced by $|-K_X|$. Then the intersection of $E$ and $C$ is the point $x$ since $|-K_X|$ has a unique basepoint. Assume that $E$ is given by the homogeneous equation $$\lambda_1a_1^3+ \lambda_2a_2^3+ \lambda_3a_3^3+ \lambda_4a_1^2a_2+ \lambda_5a_2^2a_3+ \lambda_6a_3^2a_1+ \lambda_7a_2^2a_1+ \lambda_8a_3^2a_2+ \lambda_9a_1^2a_3+ \lambda_{10}a_1a_2a_3=0.$$ With affine coordinates $(b_1,b_3)=(\frac{a_3}{a_2},\frac{a_1}{a_2})$, this equation becomes $$\lambda_1b_1^3+ \lambda_2+ \lambda_3b_3^3+ \lambda_4b_1^2 + \lambda_5b_3+ \lambda_6b_3^2b_1+ \lambda_7b_1+ \lambda_8b_3^2+ \lambda_9b_1^2b_3+ \lambda_{10}b_1b_3=0.$$
The equation defining $C$ becomes $b_3=b_1^3$. If we replace $b_3$ by $b_1^3$ in the equation defining $E$, then we obtain $$\lambda_1b_1^3+ \lambda_2+ \lambda_3b_1^9+ \lambda_4b_1^2 + \lambda_5b_1^3+ \lambda_6b_1^7+ \lambda_7b_1+ \lambda_8b_1^6+ \lambda_9b_1^5+ \lambda_{10}b_1^4=0.$$ This equation  should have $b_1=0$ as a root with multiplicity $9$ since $E$ and $B$ intersects at $x$ with multiplicity $9$. Hence we have $\lambda_1+\lambda_5=0$, and $\lambda_k=0$ for $k\neq 1,3,5$. Since $E$ is smooth, $\lambda_1 \neq 0$. We may assume that $\lambda_1 =1$, then   $E$ is given by $a_3a_2^2=a_1^3 + \lambda a_3^3$ with $\lambda \neq 0$. Hence the $j$-invariant of $E$ is $0$. Thus all of the elliptic curves in this family induced by  $|-K_X|$ are isomorphic. 
\end{proof}

\begin{prop}
\label{picard-number-surface}
Let $X$ be a klt projective rational surface. Let $\rho$ be the Picard number of $X$. Then $h^1(X,\Omega_{X}^{[1]})=\rho$.
\end{prop}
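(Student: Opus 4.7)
My plan is to reduce the statement to Steenbrink's Hodge decomposition for V-manifolds, using that klt surface singularities are quotient singularities. Let $\pi \colon \widetilde X \to X$ denote the minimal resolution; since $X$ is rational and klt, $\widetilde X$ is a smooth projective rational surface.

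First I would invoke Steenbrink's Hodge decomposition on the projective V-manifold $X$:
\begin{equation*}
H^2(X,\mathbb{C}) \;=\; H^0(X,\omega_X) \,\oplus\, H^1(X,\Omega_X^{[1]}) \,\oplus\, H^2(X,\sO_X),
\end{equation*}
where I use that on a surface with quotient singularities, Steenbrink's sheaves of V-forms $\tilde\Omega_X^p$ coincide with the reflexive differentials $\Omega_X^{[p]}$: both are reflexive of the expected rank and they agree on the smooth locus of $X$, so they are equal by normality.

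Next I would verify that the two flanking summands vanish. Since klt singularities are rational, $R^i\pi_*\sO_{\widetilde X}=0$ for $i>0$, hence $H^2(X,\sO_X)\cong H^2(\widetilde X,\sO_{\widetilde X})=0$ by rationality of $\widetilde X$. By Grauert--Riemenschneider, $\pi_*\omega_{\widetilde X}\cong \omega_X$, so $H^0(X,\omega_X)\cong H^0(\widetilde X,\omega_{\widetilde X})=0$. Combined with $\Omega_X^{[2]}\cong \omega_X$, the Hodge decomposition collapses to $h^2(X,\mathbb{C})=h^1(X,\Omega_X^{[1]})$.

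Finally I would compare $b_2(X)$ with $\rho(X)$. The exponential exact sequence on the normal analytic space $X^{\mathrm{an}}$, combined with the vanishings $H^1(X,\sO_X)=H^2(X,\sO_X)=0$, yields $\mathrm{Pic}(X)\cong H^2(X,\mathbb{Z})$; after applying GAGA and tensoring with $\mathbb{Q}$, this gives $\rho(X)=b_2(X)=h^2(X,\mathbb{C})$, and we conclude $h^1(X,\Omega_X^{[1]})=\rho(X)$. The main obstacle is to justify Steenbrink's Hodge decomposition in this form and the identification $\tilde\Omega_X^p\cong \Omega_X^{[p]}$; the remaining ingredients (rationality of klt singularities, Grauert--Riemenschneider, and the exponential sequence on a normal analytic space) are standard.
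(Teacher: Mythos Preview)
Your proof is correct and shares its core ingredient with the paper's: both invoke Steenbrink's Hodge decomposition for projective $V$-manifolds to identify $b_2(X)$ with $h^1(X,\Omega_X^{[1]})$, after observing that $h^0(X,\omega_X)=h^2(X,\sO_X)=0$ because $X$ is rational with rational singularities.

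The genuine difference lies in how you obtain $\rho(X)=b_2(X)$. The paper argues indirectly through the minimal resolution $r\colon\widetilde X\to X$: it first shows $\rho(\widetilde X)=b_2(\widetilde X)$ in the smooth case, then uses $\mathbb{Q}$-factoriality of $X$ to get $\rho(X)=\rho(\widetilde X)-l$ (with $l$ the number of exceptional curves), and a Leray spectral sequence computation from \cite[12.1]{KM92} to get $b_2(X)=b_2(\widetilde X)-l$. You instead run the exponential sequence directly on the singular analytic space $X^{\mathrm{an}}$ and appeal to GAGA. Your route is shorter and avoids the comparison with the resolution, at the price of needing the exponential sequence to be exact on a normal (in fact any reduced) complex space and GAGA for Picard groups of projective varieties; both are standard. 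The paper's route, on the other hand, makes the topological comparison $b_2(X)=b_2(\widetilde X)-l$ explicit, which is of independent interest.

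One small point worth stating explicitly in your write-up: passing from $\mathrm{Pic}(X)\cong H^2(X,\mathbb{Z})$ to $\rho(X)=b_2(X)$ uses that the rank of $H^2(X,\mathbb{Z})$ equals $\dim_{\mathbb{C}}H^2(X,\mathbb{C})$, which is a universal coefficient argument rather than literally ``tensoring with $\mathbb{Q}$''.
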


\begin{proof}
First assume that $X$ is smooth. We have the following exponential exact sequence $$0\to \mathbb{Z} \overset{\cdot 2i\pi}{\longrightarrow} \sO_{X,\mathrm{an}} \overset{\mathrm{exp}}{\longrightarrow} \sO_{X,\mathrm{an}}^* \to 0.$$ By taking the cohomology and the GAGA principal (See \cite[Thm.1 in \S 12]{Ser56}), we obtain an exact sequence   $$H^1(X, \sO_X) \to H^1(X, \sO_X^*) \to H^2(X, \mathbb{Z}) \to H^2(X, \sO_X).$$ Since $X$ is a smooth rational surface, we have $h^1(X, \sO_X)=h^2(X, \sO_X)=0$. Hence $H^1(X, \sO_X^*) \to H^2(X,\mathbb{Z})$ is an isomorphism. Since  $H^1(X, \sO_X^*)$ is isomorphic to the Picard group of $X$, we obtain that $\rho$ is equal to $b_2(X)$, the second Betti number of $X$. Since $h^2(X,\sO_X)=0$, from the Hodge theory, we have $h^1(X,\Omega_X^1)=b_2(X)$. Hence $h^1(X,\Omega_X^1)=\rho$.

Now we assume that $X$ has klt singularities, and let $r:\widetilde{X} \to X$ be the minimal resolution of singularities. Let $\rho_{\widetilde{X}}$ be the Picard number of $\widetilde{X}$. Then from the previous paragraph, we know that $b_2(\widetilde{X})=\rho_{\widetilde{X}}$. Since $X$ is $\mathbb{Q}$-factorial, we have $\rho=\rho_{\widetilde{X}}-l$, where $l$ is the number of the $r$-exceptional curves. Hence $\rho=b_2(\widetilde{X})-l$.

Since $X$ has rational singularities, $R^1r_*\sO_{\widetilde{X}}=0$. Hence the Leray spectral sequence for $r$ gives an exact sequence $$0\to H^2(X,\mathbb{C})\to H^2(\widetilde{X},\mathbb{C})\overset{p}{\to}H^0(X,R^2r_*\mathbb{C}),$$ see \cite[12.1.3.2]{KM92}. Moreover, from \cite[Prop. 12.1.6]{KM92}, the image of $p$ is free and generated exactly by the class of the the $r$-exceptional divisors. Hence we obtain that $b_2(X)=b_2(\widetilde{X})-l$ and $\rho=b_2(X)$.

Since $X$ has quotient singularities (See \cite[Prop. 4.18]{KM98}), by \cite[Thm. 1.12]{Ste76}, we have $$b_2(X)=h^0(X,\Omega_X^{[2]})+h^1(X,\Omega_X^{[1]})+h^2(X,\sO_X)=h^1(X,\Omega_X^{[1]})+2h^2(X,\sO_X).$$ Since $X$ is a rational surface with rational singularities, from the Leray spectral sequence, we have $$h^2(X,\sO_X)=h^2(\widetilde{X},\sO_{\widetilde{X}})=0.$$ Thus, $\rho=b_2(X)=h^1(X,\Omega_X^{[1]})$.
\end{proof}

Now we will complete the proof of Theorem \ref{thm-rat-surface}.

\begin{proof}[{Proof of Theorem \ref{thm-rat-surface}}] Let $T$ be a minimal curve. From the discussion  after Lemma \ref{Weil-divisor-class}, we know that  one of the following properties holds
\begin{enumerate}

\item   $h^0(X,\sO_X(-K_X))\geqslant 2$ and $\mathrm{Cl}(X)=\mathbb{Z}\cdot [K_X]$; 

\item   $h^0(X,\sO_X(T))\geqslant 3$, $h^0(X, \sO_X(T+K_X))=0$, and $\mathrm{Cl}(X)=\mathbb{Z}\cdot [T]$

\end{enumerate}

By Theorem \ref{thm-surface-1} and Theorem \ref{thm-surface-2}, we only have to prove that $X$ is not isomorphic to $S^c(E_8)$. We will argue by contradiction. Assume that $X\cong S^c(E_8)$.

Let $E$ be a smooth elliptic curve in $|-K_X|$. Let $c:Y \to X$ be the blow-up of the   basepoint $x$ of $|-K_X|$. Then we have a fibration $p:Y \to \p^1$ induced by $|-K_X|$. The general fibers of $p$ are smooth elliptic curves. Since $X \cong S^c(E_8)$,   this family of elliptic curves is isotrivial  by Lemma \ref{isotrivial-E8}.  Let $D$ be the strict transform  of $E$ in $Y$. Since $D$ is a fiber, $\Omega_Y^1|_D$ is an extension of $\sO_D$ by $\sO_D$. Since the family is isotrivial, the Kodaira-Spencer map is zero. Hence $T_Y|_D\cong T_D \oplus p^* T_{\p^1}|_D$. By taking the dual, we have $\Omega_Y^1|_D \cong \sO_D \oplus \sO_D$. Note that $X$ is smooth along $E$ and $Y$ is smooth along $D$. From the natural morphism $c^*\Omega_X^1 \to \Omega_Y^1$ and the conormal exact sequences, we obtain the following  commutative diagram with exact rows

\centerline{
\xymatrix{
0 \ar[r] & c^*Q_E \ar[r]\ar[d]^{\theta_1} & (c^*\Omega_X^1)|_D  \ar[d]^{\theta_2} \ar[r]& c^*\Omega_E^1 \ar[d] \ar[r] & 0\\
0 \ar[r]  & Q_D  \ar[r] & \Omega_Y^1|_D \ar[r]  & \Omega_D^1 \ar[r] & 0
}}

The   morphisms  $\theta_2$ and $\theta_1$ are injective, and the third is an isomorphism. By the snake lemma, we have $\mathrm{Coker}\,\theta_1 \cong \mathrm{Coker}\, \theta_2$. Note that $Q_D$ is isomorphic to $\sO_D$, and $Q_E$ is isomorphic to $\sO_E(-x)$. Hence $h^0(D,\mathrm{Coker}\, \theta_1)=1$. We obtain that $h^0(D,\mathrm{Coker}\, \theta_2)=1$. Hence $$h^0(E, \Omega_X^1|_E ) = h^0(D, (c^*\Omega_X^1)|_D) \geqslant h^0(D, \Omega_Y^1|_D) - h^0(D,\mathrm{Coker}\, \theta_2) =1.$$ This shows that $\Omega_X^1|_E \cong \sO_E \oplus \sO_E(-x)$.

Moreover, since $\sO_X(K_X)|_E \cong \sO_E(-x)$, we obtain $(\Omega_X^1 [\otimes] \sO_X(-K_X))|_E \cong \sO_E \oplus \sO_E(x)$. Since $X$ is smooth along $E$ and $\sO_X(E)\cong \sO_X(-K_X)$, we have an exact sequence $$0 \to \Omega_X^{[1]} \to \Omega_X^{[1]} \otimes \sO_X(-K_X) \to (\Omega_X^1 [\otimes] \sO_X(-K_X))|_E \to 0.$$ This induces an exact sequence $$0\to H^0(X,\Omega_X^{[1]}) \to H^0(X, \Omega_X^{[1]} \otimes \sO_X(-K_X)) \to  H^0(E, (\Omega_X^1 [\otimes] \sO_X(-K_X))|_E) \to H^1(X,\Omega_X^{[1]}).$$
Since $X$ has Picard number $1$, by Proposition \ref{picard-number-surface}, we have $h^1(X,\Omega_X^{[1]})=1$. Moreover, since $h^0(X,\Omega_X^{[1]})=0$ (See \cite[Thm. 5.1]{GKKP11}), and $h^0(E, (\Omega_X^1 [\otimes] \sO_X(-K_X))|_E)= h^0(E,\sO_E \oplus \sO_E(x))=2$, we obtain $$h^0(X, \Omega_X^{[1]} \otimes \sO_X(-K_X))\geqslant 1.$$ Hence we have $h^0(X,\sO_X(-K_X)) -h^0(X, \Omega_X^{[1]} [\otimes] \sO_X(-K_X))  +h^0(X,\sO_X(-K_X+K_X)) \leqslant 2$, which is a contradiction.
\end{proof}

\begin{rem}
\label{rem-surface-E8}
We can prove that if $X\cong S^n(E_8)$, then $X$ satisfies the conditions in Theorem \ref{thm-rat-surface}. In fact, we can show that the family of elliptic curves induced by $|-K_X|$ is not isotrivial. Hence we have  $h^0(E, (\Omega_X^1 [\otimes] \sO_X(-K_X))|_E)=1$ in this case, where $E$ is a smooth elliptic curve induced by a general member of $|-K_X|$. Since $E$ is contained in the smooth locus of $X$, the composition of the natural morphisms $$H^2(X,\mathbb{C})\to H^2(\widetilde{X},\mathbb{C})\to H^2(E,\mathbb{C})$$ is not zero, where $\widetilde{X} \to X$ is the minimal resolution. Hence,   the natural morphism $H^1(X,\Omega_X^{[1]})\to H^1(E,\Omega_E^1)$ is not zero. Now consider the   exact sequence $$H^1(X,\Omega_X^{[1]}\otimes \sO_X(-E))\to H^1(X,\Omega_X^{[1]}) \to H^1(E, \Omega_X^{[1]}|_E).$$ Since the composition of $H^1(X,\Omega_X^{[1]}) \to H^1(E, \Omega_X^{[1]}|_E) \to H^1(E,\Omega_E^1)$ is not zero, and $h^1(X,\Omega_X^{[1]})=1$, we obtain that  $H^1(X,\Omega_X^{[1]}\otimes \sO_X(-E))\to H^1(X,\Omega_X^{[1]})$ in the exact sequence   is zero. By taking the dual morphism, we obtain that the natural morphism $H^1(X,\Omega_X^{[1]})\to H^1(X,\Omega_X^{[1]}\otimes \sO_X(-K_X))$ is zero. Hence, as in the proof of Theorem \ref{thm-rat-surface}, we obtain that $$h^0(X,\Omega_X^{[1]}\otimes \sO_X(-K_X))=0$$ in this case. Note that every ample divisor $T$ in $X$ is linearly equivalent to $-aK_X$ for some $a>0$. By induction on $a$, we can show that $X$ satisfies the conditions in Theorem \ref{thm-rat-surface}.
\end{rem}

Now we can deduce Theorem \ref{main-thm}.

\begin{proof}[{Proof of Theorem \ref{main-thm}}]
By Corollary \ref{cor-cohomology}, we have $$h^0(X,\sO_X(H)) -h^0(X, \Omega_X^{[1]} [\otimes] \sO_X(H))  +h^0(X,\sO_X(H+K_X)) = 3$$ for any $\mathbb{Q}$-ample Weil divisor $H$ on $X$. Since $M$ is simply connected in codimension $1$, the smooth locus of $X$ is algebraically simply connected. Hence, by Theorem \ref{thm-rat-surface}, either $X\cong \p^2$ or $X\cong S^n(E_8)$.
\end{proof}

\bibliographystyle{amsalpha}
\bibliography{references}

\end{document}